\numberwithin{equation}{section}
\def\ds{\displaystyle}
\def\p{\partial}
\def\t{\tilde}
\subjclass[2010]{ 93B05, 93B07, 93B60, 35K05, 35P15 }
\keywords{parabolic systems, control, observability, homogenization, Carleman estimates}
\def\l@subsection{\@tocline{2}{0pt}{2pc}{5pc}{}}
\newcommand\numberthis{\addtocounter{equation}{1}\tag{\theequation}}
\newcommand{\mc}[1]{\mathcal{#1}}
\newcommand{\mb}[1]{\mathbb{#1}}
\newcommand{\mf}[1]{\mathfrak{#1}}
\newcommand{\rd}{{\rm d}}
\theoremstyle{definition}
	\newtheorem{theorem}{Theorem}
	\newtheorem{lemma}[theorem]{Lemma}
	\newtheorem{prop}[theorem]{Proposition}
	\newtheorem{defn}[theorem]{Definition}
	\newtheorem{rem}[theorem]{Remark}
	\numberwithin{theorem}{section}
\title[Control and homogenization]{Control and homogenization of a system of coupled parabolic equations with an oscillating coefficient}
\author{Vaibhav Kumar Jena and Abu Sufian}
\address{Vaibhav Kumar Jena
	\newline \indent
	{TIFR Centre for Applicable Mathematics, \newline \indent
		560065 Bangalore, Karnataka, India.}}
\email{vkjena22@tifrbng.res.in}
\address{Abu Sufian
	\newline \indent
	{Fraunhofer ITWM, \newline \indent
		67663 Kaiserslautern, Germany}}
\email{abu.sufian@itwm.fraunhofer.de}
\begin{document}

\begin{abstract}
In this article, we study the uniform null controllability problem for a system of coupled parabolic equations with an oscillating coefficient. This is done in three steps---first, we study the spectral properties of an elliptic operator; second, we allow the system to evolve freely and obtain the required decay; third, we use a Carleman estimate to prove a suitable observability result. This uniform null controllability property is then used to homogenize the associated  coupled parabolic system.
\end{abstract}

\maketitle


\section{Introduction}

The study of control and homogenization of systems with oscillating coefficients has gained significant attention in the realm of applied mathematics and engineering. In many physical phenomena, such as heat transfer in materials or diffusion processes, the coefficients governing the dynamics of the system may exhibit oscillatory behaviour. Understanding the behaviour and developing effective control strategies for such systems is crucial in such fields. Moreover, while dealing with the controllability problem in a heterogeneous medium, it is often more convenient to transform the problem into a control problem given on a homogeneous medium using the mathematical theory of homogenization.

Let us discuss some background for the problem considered in this article. Homogenization of control problems has been a very active topic in the research community for the past few decades. To the best of the authors' knowledge, one of the earliest works in this direction is in \cite{zuazua94}, where homogenization of an approximate controllability problem is studied for a linear parabolic equation with oscillating diffusion coefficient matrix. Later, this work was extended in a perforated domain in \cite{don-nabil01}.  
Following this, there have been several works in homogenization of approximate controllability for evolutionary equations with oscillating coefficients in rough interface domains, two component domains, and perforated domains. For results related to such problems, we refer to \cite{jose15, jose21, Fella19, ped06} and the references therein.

However, there are only a few results that deal with homogenization of null controllability problems for parabolic equations with oscillating periodic coefficients. In this direction, we mention the earliest finding in \cite{lopez_zuazua}, where  homogenization of null controllability is analysed for a parabolic equation in one dimension with a periodic oscillating coefficient. Furthermore, in \cite{teb12}, the authors address homogenization of a null control problem for parabolic equations with oscillating coefficient in $n$-dimension, by applying vanishing null control in the full domain at the microlevel.  

The present work is devoted to take the work in \cite{lopez_zuazua} one step further, by extending the control and homogenization study to a \emph{system} of coupled parabolic equations.

\subsection{Problem}
Let $m,n \in \mathbb{N}$, and let $ A \in M_{n \times n} (\mathbb{R}) $ and $B \in M_{n \times m} (\mathbb{R})$ be given matrices. 
We consider a function $a\in W^{2,\infty}(\mathbb{R})$, that is $1$-periodic, with the following uniform upper and lower bound
\begin{equation} \label{upper_lower_bound_1}
0<a_m\leqslant a(x)\leqslant a_M, \quad \forall x\in \mathbb{R}.
\end{equation}
For $\varepsilon \in (0,1)$, let us define the $\varepsilon$-periodic function $a^\varepsilon$ as $a^\varepsilon(x)=a\left(\frac{x}{\varepsilon}\right)$. For convenience, we will use the notation $\mc{L}^\varepsilon$ to denote the following operator
\begin{equation} \label{eq_L-op}
\mc{L}^\varepsilon := \p_x (a^\varepsilon(x)\p_x) I_{n\times n},
\end{equation}
where $I_{n\times n}$ denotes the $n\times n$ identity matrix.
We will write $\tilde{u}=(\tilde{u}_1,\tilde{u}_2,\cdots,\tilde{u}_n)^\text{tr} \in \mathbb{R}^n$, where tr denotes the transpose of the vector. Let $T>0$, and let $\omega \subset (0,1)$, be a non-empty open subset. For any function $f \in (L^2((0,T)\times \omega))^m,$ consider the following evolutionary system
\begin{align}\label{general_B_M}
\begin{dcases}
\p_t \tilde{u}^\varepsilon-\mc{L}^\varepsilon \tilde{u}^\varepsilon + A \tilde{u}^\varepsilon= B f 1_{\omega}, & (t,x)\in (0, T)\times (0,1), \\ 
\tilde{u}^\varepsilon (t,0)=\tilde{u}^\varepsilon(t,1)=0, & t \in (0,T),\\
\tilde{u}^\varepsilon (0,x)=\tilde{u}^0, & x \in (0,1),
\end{dcases}
\end{align}
where $1_{\omega}$ represents the characteristic function over $\omega$. Here, $A$ is the coupling matrix and $B$ is the control matrix.
We are interested in studying the controllability problem for the above system, hence, we now provide the definition of controllability.
\begin{defn}
Given any $T>0$ and $\tilde{u}_0\in (L^2(0,1))^n,$ the system \eqref{general_B_M} said to be null controllable at time $T,$ if there exists a control $f\in L^2(\omega)$, such that the solution $\tilde{u}^\varepsilon$ satisfies
\begin{align}\label{null}
\tilde{u}^\varepsilon(T,x)=0, \quad \text{ for } x\in (0,1).
\end{align} 
In this case, the function $f$ is called a null control for system \eqref{general_B_M}.
\end{defn}
Note that, in system \eqref{general_B_M}, since $B \in M_{n \times m} (\mathbb{R})$, we are using $m$-controls to control the system. In applications, one usually assumes that $m\leqslant n$. Furthermore, the presence of the term $1_{\omega}$ in \eqref{general_B_M} implies that the control $f$ only acts in the region $\omega$.
Before presenting the main results we mention the following hypotheses on the coupling matrix $A$ and the control matrix $B$, that will be assumed throughout the article:
\begin{itemize}

\item[(H1)] The matrix pair $(A,B)$ satisfies the Kalman condition, i.e., the Kalman matrix $[A|B] = (B|AB|A^2B|\ldots|A^{n-1}B)$ has rank $n$. 

\item[(H2)] The matrix $A$ has $n$-distinct eigenvalues and $\ds \min \{\text{Spec}(A)\} > -\frac{\pi^2}{a_M}$. 
\end{itemize}

The need of the above assumptions is explained later in the article. Then, our main results are as follows.

\begin{theorem}[Controllability] \label{thm_control_m}
Assume that (H1) and (H2) are satisfied.
Let $T>0$ and let $u_0 \in (L^2(0,1))^n$ be fixed. Then for every $\varepsilon\in (0,1)$, there exists a null control function $f^\varepsilon\in (L^2((0,T)\times\omega))^m$  to the system \eqref{general_B_M}, that is, the solution $\tilde{u}^\varepsilon$ satisfies \eqref{null}.
Moreover, there exists a constant $C := C(T) >0$, independent of $\varepsilon$, such that
\[\| f^\varepsilon \|_{L^2((0,T)\times \omega)^m} \leqslant C(T) \| u^0 \|_{(L^2(0,1))^m}.\]
\end{theorem}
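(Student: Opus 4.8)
The plan is to reduce \cref{thm_control_m} to a \emph{uniform} observability estimate for the adjoint system and then conclude by the Hilbert Uniqueness Method. Let $\varphi^\varepsilon$ solve the backward adjoint problem
\begin{align*}
\begin{dcases}
-\p_t \varphi^\varepsilon - \mc{L}^\varepsilon \varphi^\varepsilon + A^{\text{tr}} \varphi^\varepsilon = 0, & (t,x)\in (0,T)\times(0,1),\\
\varphi^\varepsilon(t,0)=\varphi^\varepsilon(t,1)=0, & t\in(0,T),\\
\varphi^\varepsilon(T,x)=\varphi_T(x), & x\in(0,1),
\end{dcases}
\end{align*}
with $\varphi_T\in (L^2(0,1))^n$. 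By the classical duality between null controllability and observability, \cref{thm_control_m} is equivalent to the existence of a constant $C(T)>0$, \emph{independent of $\varepsilon$}, such that
\begin{equation*}
\|\varphi^\varepsilon(0,\cdot)\|_{(L^2(0,1))^n}^2 \;\leqslant\; C(T)\int_0^T\!\!\!\int_\omega |B^{\text{tr}}\varphi^\varepsilon|^2\,\rd x\,\rd t .
\end{equation*}
Once this holds, the control $f^\varepsilon=B^{\text{tr}}\hat\varphi^\varepsilon$ built from the minimiser $\hat\varphi^\varepsilon$ of the standard HUM functional steers $\tilde{u}^\varepsilon(T,\cdot)$ to $0$, and the observability constant is precisely the bound on $\|f^\varepsilon\|_{(L^2((0,T)\times\omega))^m}$ in terms of $\|u^0\|_{(L^2(0,1))^n}$. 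So the whole point is the $\varepsilon$-uniformity of the observability constant.

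To deal with the coupling I would first diagonalise. Since $A$ has $n$ distinct eigenvalues by (H2), write $A=P\,\mathrm{diag}(\mu_1,\dots,\mu_n)\,P^{-1}$ and set $\psi^\varepsilon=P^{\text{tr}}\varphi^\varepsilon$; each component then solves the \emph{scalar} equation $-\p_t\psi_j^\varepsilon-\p_x(a^\varepsilon\p_x)\psi_j^\varepsilon+\mu_j\psi_j^\varepsilon=0$ with homogeneous Dirichlet data, while the observation becomes $B^{\text{tr}}\varphi^\varepsilon=(P^{-1}B)^{\text{tr}}\psi^\varepsilon=\sum_{j=1}^{n}\psi_j^\varepsilon\,r_j$, with $r_j\in\mb{R}^m$ the $j$-th row of $P^{-1}B$. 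For $A$ with simple spectrum, the Kalman condition (H1) is exactly the statement that every $r_j\neq 0$, i.e. each scalar mode genuinely enters the observation; this is the algebraic input that, upon integrating over a time interval, lets the $n$ scalar dynamics be disentangled from the single $m$-valued observation.

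The analytic ingredients come from the two preceding steps. The spectral step provides, with constants uniform in $\varepsilon$: the Weyl asymptotics $\lambda_k^\varepsilon\asymp k^2$ and the spectral gap for the Dirichlet operator $-\p_x(a^\varepsilon\p_x)$ on $(0,1)$; the lower bound $\|\phi_k^\varepsilon\|_{L^2(\omega)}\geqslant\gamma>0$ on its eigenfunctions; and, via a Carleman estimate with a weight adapted to $a^\varepsilon$, the spectral inequality $\big\|\sum_{\lambda_k^\varepsilon\leqslant\mu}c_k\phi_k^\varepsilon\big\|_{L^2(0,1)}\leqslant Ce^{C\sqrt\mu}\big\|\sum_{\lambda_k^\varepsilon\leqslant\mu}c_k\phi_k^\varepsilon\big\|_{L^2(\omega)}$. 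The free-evolution step quantifies the dissipation: since $\lambda_k^\varepsilon\gtrsim k^2$ uniformly and the sign condition $\min\text{Spec}(A)>-\pi^2/a_M$ in (H2) ensures uniform (in $\varepsilon$) dissipativity of $-\mc{L}^\varepsilon+A$ (so the semigroup is uniformly bounded, keeping the dependence $\|f^\varepsilon\|\leqslant C(T)\|u^0\|$ clean), the high-frequency part of the uncontrolled solution is damped by a factor $\leqslant e^{-c\mu^{1/2}\tau}$ over a sub-interval of length $\tau$. With these, the Lebeau--Robbiano iteration runs as usual: on a sequence of shrinking sub-intervals, alternately observe the low modes — the cost $Ce^{C\sqrt\mu}$ coming from the spectral inequality together with a block-moment construction of a family biorthogonal to $\{t^p e^{-(\lambda_k^\varepsilon+\mu_j)t}\}$, whose bounds are uniform in $\varepsilon$ thanks to the uniform gap and Weyl data and whose use of (H1) separates the $\mu_j$-blocks — and let the high modes dissipate; summing a convergent series yields the uniform observability estimate, hence \cref{thm_control_m}.

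I expect the real obstacle to be the $\varepsilon$-uniformity of the spectral inequality. The naive route via a global elliptic Carleman estimate is \emph{not} uniform: the first-order coefficient $\p_x a^\varepsilon=\varepsilon^{-1}a'(\cdot/\varepsilon)$ blows up and would enter the Carleman constant. The remedy — and the reason the argument is genuinely one-dimensional — is to exploit the scalar structure: change variables by $y=\int_0^x a^\varepsilon(s)^{-1}\,\rd s$, turning $-\p_x(a^\varepsilon\p_x)$ into an operator with an oscillating \emph{density} on a fixed interval, and choose the Carleman weight adapted to that density so that the dangerous derivative terms cancel identically; the $1$-periodicity of $a$ then supplies the equidistribution needed for all the constants (Weyl law, gap, eigenfunction lower bound, Carleman constant) to be uniform in $\varepsilon$.
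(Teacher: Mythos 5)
Your reduction to a uniform observability inequality by duality, the diagonalisation of $A$ using (H2), and the identification of (H1) with the non-vanishing of the rows of $P^{-1}B$ (Hautus) are all sound, and the low-frequency ingredients you list (uniform Weyl asymptotics, uniform spectral gap, uniform lower bound on $\|\phi_k^\varepsilon\|_{L^2(\omega)}$) are exactly what the paper establishes. The gap is in the step you yourself flag as "the real obstacle": you assert a spectral inequality $\bigl\|\sum_{\lambda_k^\varepsilon\leqslant\mu}c_k\phi_k^\varepsilon\bigr\|_{L^2(0,1)}\leqslant Ce^{C\sqrt\mu}\bigl\|\sum_{\lambda_k^\varepsilon\leqslant\mu}c_k\phi_k^\varepsilon\bigr\|_{L^2(\omega)}$ with $C$ independent of $\varepsilon$ and valid at \emph{all} frequencies, and then run a full Lebeau--Robbiano iteration. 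This is not justified and is most likely false as stated. After your change of variables the operator becomes $-\p_{zz}$ against the oscillating \emph{density} $\tilde a(z/\varepsilon)$, and the Castro--Zuazua analysis (which the paper relies on in Appendix A) shows that the uniform spectral gap and the uniform lower bound $\|\phi_k^\varepsilon\|_{L^2(\omega)}\geqslant\gamma$ are only available in the range $k\varepsilon\lesssim 1$; at resonant frequencies $k\sim 1/\varepsilon$ eigenvalues condense and eigenfunctions can concentrate away from $\omega$. Consequently neither the Carleman constant (the density enters as a potential of size $\lambda\|\tilde a\|_\infty$, contributing $e^{C\lambda^{2/3}}$ rather than $e^{C\sqrt\lambda}$, which is exactly the $e^{C/\varepsilon^{4/3}}$ loss the paper incurs) nor the biorthogonal/block-moment bounds for $\{t^pe^{-(\lambda_k^\varepsilon+\mu_j)t}\}$ can be taken uniform in $\varepsilon$ across the whole spectrum. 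Your proposal therefore assumes away the central difficulty of the problem.

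The paper's proof (following L\'opez--Zuazua) is built precisely to avoid this: it first passes to a cascade canonical form via the Kalman matrix, then splits $(0,T)$ into three pieces. On $[0,T/3]$ it controls only the low frequencies $k\leqslant[D/\varepsilon]$, where the gap and the eigenfunction lower bound are uniform, via a moment-type estimate (Tenenbaum--Tucsnak); on $[T/3,2T/3]$ the free evolution of the remaining high-frequency data decays like $e^{-C/\varepsilon^{2}}$ because $\mu^\varepsilon_{[D/\varepsilon]}\sim\varepsilon^{-2}$; on $[2T/3,T]$ it proves a genuinely \emph{non-uniform} Carleman observability with constant $e^{C/\varepsilon^{4/3}}$, which is then absorbed by the $e^{-C/\varepsilon^{2}}$ decay. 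If you want to salvage your argument you would need to either prove the uniform all-frequency spectral inequality for the oscillating density (which would be a new and strong result) or restrict the iteration to the low-frequency regime and supply a separate mechanism, as the paper does, to kill the high frequencies.
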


\begin{theorem}[Homogenization] \label{thm_homogen_m}
Assume that (H1) and (H2) are satisfied. Let $a \in W^{2,\infty}(\mathbb{R})$ be $1$-periodic and further assume that it satisfies the assumption \eqref{upper_lower_bound_1}. Then, there exists a sequence of null control functions $\{f_\varepsilon\}_{\varepsilon\in (0,1)}$ to the system \eqref{general_B_M}, such that 
$$f_\varepsilon\to f_0 \text{ strongly in } L^2((0,T)\times\omega )^m, $$
where $f_0$ is a  null control to the the homogenized system of \eqref{general_B_M}, that is
\begin{align}\label{eq_homogen}
\begin{dcases}
\p_t \tilde{u}-\mc{L}^0 \tilde{u} +A \tilde{u}= B f_0 1_{\omega}, & (t,x)\in (0, T)\times (0,1), \\ 
\tilde{u}(t,0)=\tilde{u}(t,1)=0, & t \in (0,T),\\
\tilde{u} (0,x)=\tilde{u}^0, \tilde{u}(T,x)=0, & x \in (0,1),
\end{dcases}
\end{align}
where the homogenized operator $\mathcal{L}^0 := \left(\left(\ds \int_0^1\frac{1}{a(s)}ds\right)^{-1}\partial_{xx} \right) I_{n\times n}.$
\end{theorem}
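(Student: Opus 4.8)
The plan is to deduce the statement from the uniform null controllability of \Cref{thm_control_m} together with the classical one-dimensional periodic homogenization of parabolic equations, the only substantial work being the upgrade from weak to strong convergence of the controls. First I would fix, for every $\varepsilon$, the control $f^\varepsilon$ to be the \emph{minimal $L^2((0,T)\times\omega)^m$-norm} (HUM) null control of \eqref{general_B_M}; this is meaningful because \Cref{thm_control_m} already provides feasibility for each $\varepsilon$ together with the uniform bound $\|f^\varepsilon\|_{L^2((0,T)\times\omega)^m}\leqslant C(T)\|u^0\|$. By the usual duality, $f^\varepsilon=B^{\mathrm{tr}}\varphi^\varepsilon 1_\omega$, where $\varphi^\varepsilon$ solves the backward adjoint system $-\p_t\varphi^\varepsilon-\mc L^\varepsilon\varphi^\varepsilon+A^{\mathrm{tr}}\varphi^\varepsilon=0$ with homogeneous Dirichlet boundary conditions and terminal datum $\varphi^\varepsilon(T,\cdot)=\varphi^T_\varepsilon$ minimizing
\[
J_\varepsilon(\varphi^T)=\tfrac12\int_0^T\!\!\int_\omega |B^{\mathrm{tr}}\varphi^\varepsilon|^2\,\rd x\,\rd t+\langle u^0,\varphi^\varepsilon(0,\cdot)\rangle;
\]
one has $\min J_\varepsilon=-\tfrac12\|f^\varepsilon\|_{L^2((0,T)\times\omega)^m}^2=\tfrac12\langle u^0,\varphi^\varepsilon(0,\cdot)\rangle$, and the $\varepsilon$-uniform observability inequality underlying \Cref{thm_control_m} makes $J_\varepsilon$ coercive with a constant independent of $\varepsilon$; consequently $\|B^{\mathrm{tr}}\varphi^\varepsilon\|_{L^2(\omega\times(0,T))^m}$ and $\|\varphi^\varepsilon(0,\cdot)\|_{L^2(0,1)^n}$ are bounded uniformly in $\varepsilon$.

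Next I would extract a subsequence with $f^\varepsilon\rightharpoonup f_0$ weakly in $L^2((0,T)\times\omega)^m$ and homogenize the state equation. The solutions $\tilde u^\varepsilon$ are bounded in $L^2(0,T;H^1_0(0,1))^n\cap C([0,T];L^2(0,1))^n$ with $\p_t\tilde u^\varepsilon$ bounded in $L^2(0,T;H^{-1}(0,1))^n$, so by the Aubin--Lions--Simon lemma, along a further subsequence, $\tilde u^\varepsilon\rightharpoonup\tilde u$ in $L^2(0,T;H^1_0)^n$, strongly in $L^2((0,T)\times(0,1))^n$, and in $C([0,T];H^{-1})^n$. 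The one-dimensional oscillating test function $w^\varepsilon(x)=\left(\int_0^1 a(s)^{-1}\,\rd s\right)^{-1}\int_0^x a^\varepsilon(s)^{-1}\,\rd s$, for which $a^\varepsilon\p_x w^\varepsilon$ is the constant $\left(\int_0^1 a^{-1}\right)^{-1}$ and $w^\varepsilon\to x$ uniformly, then identifies the weak limit of the flux $a^\varepsilon\p_x\tilde u^\varepsilon$ as $\left(\int_0^1 a^{-1}\right)^{-1}\p_x\tilde u$, while the zeroth-order coupling $A\tilde u^\varepsilon\to A\tilde u$ passes to the limit by strong $L^2$ convergence. Hence $\tilde u$ solves \eqref{eq_homogen} with control $f_0$, and since $\tilde u^\varepsilon(T,\cdot)=0$ converges to $\tilde u(T,\cdot)$ in $H^{-1}$ we obtain $\tilde u(T,\cdot)=0$: thus $f_0$ is a null control of the homogenized system, which in particular is therefore null controllable and admits a unique minimal-norm null control $f_0^\star$.

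To upgrade weak to strong convergence it suffices, in the Hilbert space $L^2((0,T)\times\omega)^m$, to show $\|f^\varepsilon\|\to\|f_0\|$. Weak lower semicontinuity together with the previous step gives $\|f_0^\star\|\leqslant\|f_0\|\leqslant\liminf_\varepsilon\|f^\varepsilon\|$, and for the reverse inequality I would establish the Mosco ($\Gamma$-)convergence of the functionals $J_\varepsilon$ to the homogenized functional $J_0$. A recovery sequence is simply the constant one: for a fixed terminal datum $\varphi^T$, the time-reversed adjoint states $\psi^\varepsilon(s,\cdot)=\varphi^\varepsilon(T-s,\cdot)$ solve a forward parabolic system with the same oscillating coefficient, hence homogenize exactly as above, so $\varphi^\varepsilon\to\varphi^0$ strongly in $L^2((0,T)\times(0,1))^n$ and $\varphi^\varepsilon(0,\cdot)\rightharpoonup\varphi^0(0,\cdot)$ in $L^2(0,1)^n$, which yields $J_\varepsilon(\varphi^T)\to J_0(\varphi^T)$; the $\liminf$ inequality along weakly convergent terminal data follows from the same compactness, the relevant convergence on $\omega\times(0,T)$ being strong. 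Mosco convergence then gives $\min J_\varepsilon\to\min J_0$, that is $\|f^\varepsilon\|\to\|f_0^\star\|$, together with convergence of the minimizers, so that $f_0=f_0^\star$; therefore $f^\varepsilon\to f_0$ strongly, and since the limit is always the unique homogenized minimal-norm control, the whole family---not merely a subsequence---converges.

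The step I expect to be the main obstacle is the uniform-in-$\varepsilon$ handling of the HUM framework in the last paragraph: $J_\varepsilon$ is coercive only on the completion $\mc H_\varepsilon$ of $L^2(0,1)^n$ for the observability seminorm $\varphi^T\mapsto\|B^{\mathrm{tr}}\varphi^\varepsilon\|_{L^2(\omega\times(0,T))}$, and these spaces a priori depend on $\varepsilon$. One must therefore exploit the $\varepsilon$-independent Carleman/observability constant coming from the proof of \Cref{thm_control_m} to place all the problems in a common functional setting, use a density argument to promote the convergence $J_\varepsilon(\varphi^T)\to J_0(\varphi^T)$ from smooth to arbitrary admissible terminal data, and verify that the minimizers $\varphi^T_\varepsilon$ do converge in the appropriate topology; equivalently, one can approximate each minimal-norm control by vanishing-penalization controls, so as to stay inside $L^2$ throughout, and let $\varepsilon$ and the penalization parameter tend to zero simultaneously. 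By comparison, the homogenization of the direct and adjoint systems and the identification of $f_0$ as an admissible control are routine adaptations of the scalar one-dimensional theory, the coupling matrices $A$ and $A^{\mathrm{tr}}$ entering only as compact perturbations that do not disturb the $\varepsilon$-uniform estimates already secured in \Cref{thm_control_m}.
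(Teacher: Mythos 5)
Your proposal follows the same overall strategy as the paper: select the HUM (minimal $L^2$-norm) null controls, use the $\varepsilon$-uniform observability from \cref{thm_control_m} to bound them, extract a weak limit, show the limit is a null control of the homogenized system, and upgrade to strong convergence via convergence of norms. (The paper first reduces to the one-control cascade system \eqref{canonical_system} via \cref{rem_equiv} and proves \cref{can_one_homo}; you work directly on \eqref{general_B_M}, which is immaterial.) Two tactical differences are worth recording. First, to show that the weak limit $f_0$ is an admissible null control, you homogenize the \emph{state} equation with oscillating test functions and pass to the limit in $\tilde u^\varepsilon(T,\cdot)=0$; the paper instead passes to the limit in the Euler--Lagrange identity \eqref{ep_char_iden} for \emph{fixed} terminal data of the adjoint system, using only the classical homogenization of the adjoint trajectories. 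Both work, but the paper's route simultaneously identifies the limit as the homogenized HUM control. Second, for the norm convergence you invoke Mosco/$\Gamma$-convergence of $J_\varepsilon$ to $J_0$; the paper reaches the same conclusion more directly by evaluating the optimality identity at the minimizer itself, which gives $\int_0^T\int_\omega|\bar w_1^\varepsilon|^2=-\int_0^1 u_0\,\bar w^\varepsilon(0,x)\,\rd x$ and reduces everything to the strong convergence of $\bar w^\varepsilon(0,\cdot)$.

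The one place where your argument is genuinely incomplete --- and you flag it yourself --- is the convergence of the adjoint minimizers across the $\varepsilon$-dependent observation spaces, which you need both for the Mosco $\liminf$ inequality and for the convergence of minimizers. Saying the $\liminf$ inequality ``follows from the same compactness'' is not a proof: the optimal terminal data are bounded only in the completed spaces $H_\varepsilon$, not in $L^2(0,1)^n$, so you cannot extract a weakly convergent subsequence of terminal data in a fixed Hilbert space, and the recovery-sequence computation alone does not yield convergence of the minima. The paper's resolution is to abandon the terminal data entirely and work with the adjoint \emph{trajectories}: the observability inequality \eqref{full-obs-est} applied on $(\tau,T)$ gives $\|\bar w^\varepsilon(\tau,\cdot)\|_{(L^2(0,1))^n}\leqslant C(T-\tau)$ for every $\tau<T$, parabolic regularization upgrades this to an $H^2\cap H^1_0$ bound, and compactness then yields strong convergence of $\bar w^\varepsilon$ on $(0,\tau)\times(0,1)$ and of $\bar w^\varepsilon(0,\cdot)$ --- which is all that enters the optimality identities. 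If you replace your Mosco step by this trajectory-compactness argument (or carry out in full the vanishing-penalization scheme you mention), your proof closes.
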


\begin{rem}
The assumption (H2) on $A$ can be relaxed further. One can get the same result under the assumption that the eigenvectors of $A$ form a basis for $\mathbb{R}^n$, with slight modifications to the current proof. Also, note that the condition on Spec($A$) is a weaker requirement than the positive definiteness of $A$. In particular, our result holds for a class of matrices larger than the set of positive definite matrices.
\end{rem}

\subsection{Proof strategy} \label{ssec_proof_strat}
To establish uniform null controllability, the main idea is to follow the strategy outlined in \cite{lopez_zuazua}. In particular, we partition the time interval $(0,T)$, into three equal parts: $[0, T/3]$, $[T/3, 2T/3]$, and $[2T/3, T]$, and then address each interval separately.

In the first time interval $[0, T/3]$, we investigate the spectral properties for the associated  adjoint operator. Employing spectral analysis, we define an $\varepsilon$-dependent low-frequency space and establish the uniform null controllability of the projection over the low-frequency space. This is achieved by demonstrating an $\varepsilon$-independent observability inequality. Here, assumption (H2) is essential to show that the eigenvalues for the required operator are positive and that we can get a basis of eigenvectors for the considered vector space.

In the second time interval $[T/3, 2T/3]$, we begin with initial data at $T/3$ using the solution from the first time interval. We allow the system to evolve freely without applying any control. Utilizing the spectral properties studied in the first part, we show that the solution exhibits decay of order $e^{-c_1/\varepsilon^2}$, for some constant $c_1>0$.

During the third time interval $[2T/3, T]$, we prove an observability estimate for the solution using a parabolic Carleman estimate. The observability constant is of the order $e^{c_2/ \varepsilon^{4/3}}$. However, for this part, we consider initial data at $2T/3$ using the solution from the second part, which is of the order $e^{-c_1/\varepsilon^2}$. Consequently, the null control cost in the third time interval is uniformly bounded. Using the form bound, the considered coupled system will be homogenized.

In this study, we have made two significant advancements. Firstly, we have successfully established the necessary spectral behavior property for the coupled elliptic eigenvalue problem, which is crucial for solving the homogenization of the controllability problem of the coupled parabolic system. This enables us to prove the uniform low frequency null controllability and suitable decay estimates. Secondly, using a parabolic Carleman estimate (see \cite{coron_book,fur_iman,tucsnak_weiss}) we have proven a Carleman-type weighted estimate for the coupled parabolic system (see \eqref{eq_CE_nxn_A6} or \eqref{eq_CE_nxn_B6}), which is then used to prove the required observability result. 

\subsection{Outline} We now provide an outline of the paper. In \cref{sec_mto1}, we show that studying the control problem for \eqref{general_B_M} is equivalent to studying a control problem for a system with only one control acting on it and the coupling given by a cascade matrix. This later problem is simpler to study. In \cref{sec_mainproof}, we prove observability for the considered system using the three part proof idea described above. In the appendix we give details of some technical results used in the proofs. In paticular, in \cref{app_eigen}, we recall the spectral behaviour  for the $1$-dimensional elliptic eigenvalue problem with oscillating coefficient. In \cref{app_mcontrol}, we show the reduction of the $m$-control system to a system with a finite number of cascade subsystems each controlled by one control.
 
\subsection{Acknowledgement}
The authors would like to thank Debayan Maity, for his advice and helpful suggestions concerning this work. The authors are also grateful to TIFR Centre for Applicable Mathematics for providing financial support. A part of this work was completed while the second author was employed at TIFR Centre for Applicable Mathematics.

\section{$m$-controls to $1$-control} \label{sec_mto1}

In our proof, we will be transforming system \eqref{general_B_M} into a cascade form. In particular, the coupling matrix $A$ will be transformed into a cascade matrix $\mc{C}$. The transformed problem is comparatively easier to study. Although we use the controllability result to obtain homogenization, the topic of control of cascade parabolic systems is interesting in itself, and there are several results in the literature along this direction. In \cite{MR2598471}, the authors study a null controllability problem for a general cascade parabolic system. In \cite{MR2141924}, an approximate controllability problem for a parabolic system with a diagonalisable diffusion matrix is addressed. The work in \cite{MR2511553} presents a generalised Kalman condition for the $n\times n$ linear parabolic system with constant coefficients and diagonalisable diffusion matrix. Next, \cite{khodjaBDG} presents a control result for coupled parabolic equations with time dependent coupling matrices. For further reading related to the null controllability of coupled systems, we refer \cite{coron09,coron10} and the references therein. We also refer to the notable article \cite{MR2846087} for an excellent survey on the controllability of systems of parabolic equations. Finally, we mention the recent work \cite{MR4687430}, where the authors present a Kalman condition for the controllability of a coupled Stokes system, using related ideas from \cite{MR2511553}.

In our work the idea to transform the $m$-control system into a combination of (several) smaller one control systems is inspired from the work in \cite{khodjaBDG}. Using this work, we can show that, under the Kalman rank condition, for studying the control problem for system \eqref{general_B_M}, where $m$-controls are used, it is enough to study a related control problem where only one control is used. Indeed, when (H1) holds, system \eqref{general_B_M} can be decomposed into a finite number of \emph{smaller} subsystems or blocks, where the subsystems are controlled by one control each. This process is described in \cref{app_mcontrol}. In particular, we will consider the system
\begin{align}\label{canonical_system_n}
\begin{dcases}
\p_t{u}^\varepsilon- \mc{L}^\varepsilon {u}^\varepsilon +\mathcal{C} u^\varepsilon = D 1_{\omega} f, & (t,x) \in (0, T)\times (0,1), \\ 
u^\varepsilon(t,0)=u^\varepsilon(t,1)=0, & t \in (0,T), \\ 
u(0,x)={u}^0, & x \in (0,1), 
\end{dcases}
\end{align}
where $\mc{C}$ is the coupling matrix and the control matrix $D$ satisfies $D:=(e_{S_1}|e_{S_2}|\ldots|e_{S_r})$, for some indices $S_i$'s, $1\leqslant i \leqslant r$. Here, $e_j$ is a vector from the standard basis of $\mb{R}^n$, with 1 in the $j$-th position and 0 everywhere else. The matrix $\mc{C}$ is composed of $r$-cascade matrices that allow us to simplify the problem. Hence it is enough to study the control problem for a system where only one control is acting.

For the reader's convenience, we present the method to reduce a general one control system to a cascade one control system, in the following section. The case with $m$-controls is an extension of this method and can be found in \cref{app_mcontrol}.

\subsection{One-control case}
Let $b \in \mb{R}^n$ and $ A \in \mc{L}(\mathbb{R}^n)$ such that the pair $(A,b)$ satisfies the Kalman rank condition, that is, the Kalman matrix $P:=[b|Ab|A^2b |\ldots |A^{n-1}b]$ has rank $n$. This condition is the analogous version of (H1). Note that, $P$ is invertible.
Then, we consider the system
\begin{align}\label{general_C_M}
\begin{dcases}
\p_t \tilde{u}^\varepsilon-\mc{L}^\varepsilon \tilde{u}^\varepsilon +A \tilde{u}^\varepsilon=b 1_{\omega} f, & (t,x)\in (0, T)\times (0,1), \\ 
\tilde{u}^\varepsilon (t,0)=\tilde{u}^\varepsilon(t,1)=0, & t \in (0,T),\\
\tilde{u} (0,x)=\tilde{u}^0, & x \in (0,1).
\end{dcases}
\end{align}
Let us use the following change of variable: ${u}=P^{-1}\tilde{u}$.
First, notice that $P^{-1}b= e_1$. Let us choose the matrix given by $\mc{C}=[e_2~e_3~\cdots~e_n~C_n]$, where $\{e_i\}_{i=1}^n$ denotes the standard basis of $\mathbb{R}^n$ and $C_n=(a_1,a_2,\cdots, a_{n})^\text{tr}\in \mathbb{R}^n$ is such that 
$$A^n=a_1 I+ a_2A+\cdots+a_{n}A^{n-1},$$ 
that is, $a_i$'s are the coefficients of the characteristic polynomial of $A$. Hence, $\mc{C}$ can be written as
\[ \mc{C} = \begin{bmatrix}
0 & 0 & \ldots & 0 & a_1 \\
1 & 0 & \ldots & 0 & a_2 \\
0 & 1 & \ldots & 0 & a_3 \\
\vdots & \vdots & \ddots & \ddots & \vdots \\
0 & 0 & \ldots & 1 & a_n
\end{bmatrix}. \]
Then, we have $AP = ( Ab|A^2b|\ldots|A^nb ) = P \mc{C}$, which implies that $P^{-1}AP=\mc{C} $. With the above transformation, system \eqref{general_C_M} takes the following canonical form
\begin{align}\label{canonical_system}
\begin{dcases}
\p_t u^\varepsilon - \mc{L}^\varepsilon u^\varepsilon +\mathcal{C} u^\varepsilon = e_1 1_{\omega} f, & (t,x)\in (0, T) \times (0,1), \\ 
u^\varepsilon(t,0)=u^\varepsilon(t,1)=0, & t \in (0,T),\\
u(0,x)=u^0= P^{-1} \tilde{u}^0, & x \in (0,1).
\end{dcases}
\end{align}
Then we have the following result.
\begin{prop}
The controllabilty of \eqref{general_C_M} is equivalent to the controllabilty of \eqref{canonical_system}.
\end{prop}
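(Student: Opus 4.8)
The plan is to show that the change of variable $u = P^{-1}\tilde u$ sets up a bijection between null controls of \eqref{general_C_M} and null controls of \eqref{canonical_system}, with the \emph{same} control function $f$. First I would verify that the transformation is well-defined: since $(A,b)$ satisfies the Kalman rank condition, $P = [b|Ab|\cdots|A^{n-1}b]$ is invertible, so $P^{-1}$ exists and $\tilde u \mapsto P^{-1}\tilde u$ is a linear isomorphism of $(L^2(0,1))^n$ (and likewise on the relevant solution spaces $C([0,T];(L^2)^n)\cap L^2(0,T;(H^1_0)^n)$), which in particular preserves the zero function and the Dirichlet boundary condition componentwise, since $P^{-1}$ acts only on the $\mathbb{R}^n$ index and commutes with $\partial_t$, $\partial_x$, and multiplication by $a^\varepsilon$.

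Next I would check that $\tilde u^\varepsilon$ solves \eqref{general_C_M} with control $f$ and datum $\tilde u^0$ if and only if $u^\varepsilon := P^{-1}\tilde u^\varepsilon$ solves \eqref{canonical_system} with the same control $f$ and datum $u^0 = P^{-1}\tilde u^0$. Applying $P^{-1}$ to the PDE in \eqref{general_C_M} and using that $P^{-1}$ commutes with $\partial_t$ and with $\mc{L}^\varepsilon = \partial_x(a^\varepsilon\partial_x)I_{n\times n}$ (because the latter is a scalar operator times the identity), the only two terms that need attention are the coupling and the control: $P^{-1}(A\tilde u^\varepsilon) = (P^{-1}AP)(P^{-1}\tilde u^\varepsilon) = \mc{C}u^\varepsilon$, using the identity $P^{-1}AP = \mc{C}$ established just above in the excerpt (from $AP = (Ab|A^2b|\cdots|A^nb) = P\mc{C}$, where the last column uses the Cayley--Hamilton relation $A^n = a_1 I + a_2 A + \cdots + a_n A^{n-1}$); and $P^{-1}(b\,1_\omega f) = (P^{-1}b)\,1_\omega f = e_1\,1_\omega f$, since $P^{-1}b = e_1$ (the first column of $P$ is $b$). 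The boundary and initial conditions transform correctly as noted. Since $P$ is invertible the converse direction is identical, multiplying by $P$ instead.

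Finally I would translate this equivalence into the statement about controllability: given $\tilde u^0 \in (L^2(0,1))^n$, a control $f$ drives \eqref{general_C_M} from $\tilde u^0$ to $0$ at time $T$ exactly when the same $f$ drives \eqref{canonical_system} from $P^{-1}\tilde u^0$ to $P^{-1}0 = 0$; conversely, given $u^0$, a control steering \eqref{canonical_system} from $u^0$ to $0$ is also a control steering \eqref{general_C_M} from $Pu^0$ to $0$. Hence null controllability (at time $T$, for arbitrary initial data) holds for one system if and only if it holds for the other, and moreover the control cost estimates are preserved up to the fixed factors $\|P\|$ and $\|P^{-1}\|$, which are independent of $\varepsilon$ — a point worth recording since the uniform-in-$\varepsilon$ bound of \cref{thm_control_m} is the ultimate goal.

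There is no serious obstacle here; this is a bookkeeping argument. The only point requiring a little care is confirming that the operator $\mc{L}^\varepsilon$ genuinely commutes with the constant matrix $P^{-1}$ — which is immediate from its definition \eqref{eq_L-op} as a scalar differential operator tensored with $I_{n\times n}$ — and that the function spaces and the notion of (weak/mild) solution are mapped to one another bijectively under the fixed linear isomorphism $P^{-1}$, so that "controllable" really does transfer in both directions.
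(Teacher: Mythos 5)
Your proposal is correct and follows essentially the same route as the paper, whose entire proof is the one-line observation that the invertibility of $P$ makes the two notions equivalent; you have simply filled in the bookkeeping (commutation of $P^{-1}$ with $\partial_t$ and $\mc{L}^\varepsilon$, the identities $P^{-1}AP=\mc{C}$ and $P^{-1}b=e_1$, and preservation of the zero target) that the paper leaves implicit. The remark that the control cost transfers up to the $\varepsilon$-independent factors $\|P\|$, $\|P^{-1}\|$ is a useful addition but not a departure from the paper's argument.
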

\begin{proof}
Since the transformation matrix $P$ is invertible the two notions are equivalent.
\end{proof}
 
As any pair $(A, b)$ that satisfies the Kalman rank condition can be transformed into a canonical system of type \eqref{canonical_system}, our analysis will focus on this canonical form to investigate the null controllability problem. We now provide equivalent statements for the one control system \eqref{general_C_M}, as done for system \eqref{general_B_M}. In particular, corresponding to \cref{thm_control_m} and \cref{thm_homogen_m}, we have the following statements.
\begin{theorem}[Controllability] \label{thm_control_1}
Assume that rank$[b|Ab|A^2b |\ldots |A^{n-1}b] = n$ and that $A$ satisfies (H2). Let $T>0$ and let $u_0\in (L^2(0,1))^n$ be fixed. Then, for every $\varepsilon\in (0,1)$, there exists a null control $f_\varepsilon \in L^2((0,T)\times\omega) $   to the system \eqref{general_C_M}.
Moreover, there exists a constant $C := C(T) >0$, independent of $\varepsilon$, such that
\begin{equation}
\| f^\varepsilon \|_{L^2((0,T)\times\omega)} \leqslant C(T) \| u^0 \|_{(L^2(0,1))^n}.
\end{equation}
\end{theorem}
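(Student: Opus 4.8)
\emph{Proof proposal.} The plan is to establish the $\varepsilon$-uniform observability inequality dual to null controllability of the cascade system \eqref{canonical_system} — which is equivalent to \eqref{general_C_M} since $P$ is invertible — following the three-interval scheme of \cref{ssec_proof_strat}. Write $\mc{A}^\varepsilon := -\p_x(a^\varepsilon\p_x)$ on $H^2(0,1)\cap H^1_0(0,1)$, with Dirichlet eigenpairs $(\lambda^\varepsilon_k,\phi^\varepsilon_k)_{k\ge1}$. By (H2) the matrix $\mc{C}$ (hence $\mc{C}^*$) is diagonalisable with real simple spectrum $\mathrm{Spec}(\mc{C})=\mathrm{Spec}(A)=\{\mu_1,\dots,\mu_n\}$, so the elliptic operator $\mc{A}^\varepsilon I_{n\times n}+\mc{C}^*$ on $(L^2(0,1))^n$ has eigenvalues $\{\lambda^\varepsilon_k+\mu_j\}_{k,j}$, and (H2) is exactly what makes $\lambda^\varepsilon_k+\mu_j\ge c_0>0$ for all $k,j$ and all $\varepsilon$. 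Fix a large threshold $\rho>0$ (depending only on $a_m,a_M,\mathrm{Spec}(A)$) and let $\Pi^\varepsilon$ be the spectral projection of $(L^2(0,1))^n$ onto the low-frequency subspace $E^\varepsilon:=\mathrm{span}\{\phi^\varepsilon_k v:\ \lambda^\varepsilon_k\le\rho\varepsilon^{-2},\ v\in\mathbb{R}^n\}$.

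On $[0,T/3]$ I would build a control $f^\varepsilon_1$ steering $\Pi^\varepsilon u^\varepsilon$ to $0$ at $t=T/3$, so that $u^\varepsilon(T/3)=(I-\Pi^\varepsilon)u^\varepsilon(T/3)$. By duality this reduces to a uniform-in-$\varepsilon$ observability inequality $\|\varphi(0)\|^2_{(L^2)^n}\le C\int_0^{T/3}\!\!\int_\omega|\varphi_1|^2$ for the projection onto $E^\varepsilon$ of the adjoint system $-\p_t\varphi-\mc{L}^\varepsilon\varphi+\mc{C}^*\varphi=0$, $\varphi|_{x\in\{0,1\}}=0$. Expanding $\varphi=\sum_k\phi^\varepsilon_k(x)\,\vec c_k(t)$ with $\dot{\vec c}_k=(\lambda^\varepsilon_k I_{n\times n}+\mc{C}^*)\vec c_k$, the argument combines three ingredients: a uniform spectral (Lebeau–Robbiano) inequality $\|\sum_{\lambda^\varepsilon_k\le\Lambda}a_k\phi^\varepsilon_k\|_{L^2(0,1)}\le Ce^{C\sqrt\Lambda}\|\sum_k a_k\phi^\varepsilon_k\|_{L^2(\omega)}$ for $\Lambda\le\rho\varepsilon^{-2}$; a uniform spectral gap $\lambda^\varepsilon_{k+1}-\lambda^\varepsilon_k\ge c$ in the same range — both supplied by the spectral analysis recalled in \cref{app_eigen} — and observability of the finite-dimensional pair $(\lambda^\varepsilon_k I_{n\times n}+\mc{C}^*,e_1)$, which holds with cost polynomial in $\lambda^\varepsilon_k$ because $(\mc{C},e_1)$ satisfies the Kalman condition by construction (its Kalman matrix is $P^{-1}[b|Ab|\dots|A^{n-1}b]=I$), and so converts observation of the single component $\varphi_1$ into control of the whole $\mathbb{R}^n$-block. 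A Lebeau–Robbiano iteration over the fixed interval $[0,T/3]$ then yields $f^\varepsilon_1$ with $\|f^\varepsilon_1\|_{L^2((0,T/3)\times\omega)}\le C\|u^0\|$, $C$ independent of $\varepsilon$.

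On $[T/3,2T/3]$ I would set $f^\varepsilon\equiv0$: since $u^\varepsilon(T/3)\in(E^\varepsilon)^\perp$, a subspace invariant under the (forward) semigroup, and since $\lambda^\varepsilon_k+\mu_j\ge\tfrac12\rho\varepsilon^{-2}$ there (having chosen $\rho$ large), a plain energy estimate gives $\|u^\varepsilon(2T/3)\|\le Ce^{-c_1\varepsilon^{-2}}\|u^0\|$ for some $c_1>0$, the $\varepsilon$-independent polynomial factor from the non-normality of $\mc{C}$ being absorbed into the exponential. On $[2T/3,T]$ I would then prove null controllability of \eqref{canonical_system} from the datum $u^\varepsilon(2T/3)$ by a parabolic Carleman estimate for the adjoint cascade system: observing only $\varphi_1$ on $\omega$ and using the cascade relations $\varphi_{i+1}=\p_t\varphi_i+\mc{L}^\varepsilon\varphi_i$ to bootstrap a one-equation Carleman inequality to the whole system. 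The crux is to track the $\varepsilon$-dependence of the resulting observability constant: since $\|a^\varepsilon\|_{W^{1,\infty}}\sim\varepsilon^{-1}$ and $\|a^\varepsilon\|_{W^{2,\infty}}\sim\varepsilon^{-2}$, optimising the two large Carleman parameters yields a constant of order $e^{C\varepsilon^{-4/3}}$ (any bound $e^{o(\varepsilon^{-2})}$ would do). Hence there is $f^\varepsilon_3$ with $\|f^\varepsilon_3\|_{L^2((2T/3,T)\times\omega)}\le Ce^{C\varepsilon^{-4/3}}\|u^\varepsilon(2T/3)\|\le Ce^{C\varepsilon^{-4/3}-c_1\varepsilon^{-2}}\|u^0\|\le C\|u^0\|$, uniformly in $\varepsilon\in(0,1)$, and $u^\varepsilon(T)=0$. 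Setting $f^\varepsilon:=f^\varepsilon_1\,1_{(0,T/3)}+f^\varepsilon_3\,1_{(2T/3,T)}$ and transporting by $P$ gives \cref{thm_control_1}.

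I expect the third step to be the main obstacle: proving the parabolic Carleman estimate for the coupled (cascade) system with the oscillating coefficient $a^\varepsilon$, while keeping the dependence of the observability constant on $\varepsilon$ explicit and subexponential in $\varepsilon^{-2}$ — the derivatives of $a^\varepsilon$ blow up like $\varepsilon^{-1},\varepsilon^{-2}$ and must be balanced against the Carleman parameters. The uniform spectral inequality underlying Step 1 is the other delicate point, but it is essentially the content of \cref{app_eigen}.
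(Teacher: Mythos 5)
Your overall architecture is the paper's: pass to the cascade system, split $(0,T)$ into three equal intervals, kill the low frequencies uniformly on the first, let the high frequencies decay like $e^{-c/\varepsilon^2}$ on the second, and pay an observability cost $e^{C/\varepsilon^{4/3}}$ on the third. Two of your technical claims, however, do not match what is actually available, and one of them is a genuine gap.

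The main gap is in your third step. You propose to run the Carleman estimate directly on the adjoint system with the oscillating divergence-form operator $\p_x(a^\varepsilon\p_x)$ and assert that ``optimising the two large Carleman parameters'' against $\|a^\varepsilon\|_{W^{1,\infty}}\sim\varepsilon^{-1}$, $\|a^\varepsilon\|_{W^{2,\infty}}\sim\varepsilon^{-2}$ yields an observability constant $e^{C\varepsilon^{-4/3}}$. This is not justified and is precisely the point the paper treats differently: a direct Carleman inequality for a principal coefficient oscillating at scale $\varepsilon$ forces the large parameter to absorb terms involving $(a^\varepsilon)'$ and $(a^\varepsilon)''$ through the weight function, and the natural outcome is a constant of size $e^{C/\varepsilon^{2}}$ (or worse), which cannot be beaten by the decay $e^{-c_1/\varepsilon^2}$ without an uncontrolled comparison of constants. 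The paper avoids this entirely by the López--Zuazua reduction: the change of variables \eqref{eq_change_of_var}, then \eqref{eq_w_tildew}, then the integrating factor \eqref{eq_cov_wtov}, which turn the principal part into the constant-coefficient $\p_{yy}$ at the price of a bounded potential $b$ with $\|b\|_{L^\infty}\sim\varepsilon^{-2}$; the exponent $4/3$ then comes from the standard $e^{C\|b\|_{L^\infty}^{2/3}}$ dependence of the observability constant on a zeroth-order potential (\cref{thm_para_carl_est} applied componentwise, with the cascade bootstrap you describe carried out in the transformed variables $v_i$). Without this reduction your $e^{C\varepsilon^{-4/3}}$ is an assertion, not a proof, and the whole uniform bound collapses.

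The second discrepancy is in your first step. You invoke a uniform Lebeau--Robbiano spectral inequality $\|\sum_{\lambda^\varepsilon_k\le\Lambda}a_k\phi^\varepsilon_k\|_{L^2(0,1)}\le Ce^{C\sqrt\Lambda}\|\cdot\|_{L^2(\omega)}$ up to $\Lambda\sim\varepsilon^{-2}$ and say it is ``essentially the content of \cref{app_eigen}.'' It is not: the appendix supplies only a uniform gap for $\sqrt{\lambda^\varepsilon_k}$ and a uniform lower bound $\|\varphi^\varepsilon_k\|_{L^2(\omega)}\ge\tilde C$ in the range $k\varepsilon\lesssim 1$ (\cref{spectral_thm_couple}). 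The paper does not prove, nor need, a spectral inequality; it expands the adjoint solution in the eigenbasis of $-\mathcal{L}^\varepsilon+\mathcal{C}^*$ (whose eigenvalues are $\lambda^\varepsilon_k+\sigma_i$ with first components of the eigenvectors bounded below, which is where observing only $w_1$ is exploited) and applies the Tenenbaum--Tucsnak inequality for the family $\{e^{-\mu^\varepsilon_k t}\}$ to get the uniform observability \eqref{observ_1st} in one stroke, with no iteration. Your route could in principle work, but the uniform-in-$\varepsilon$ spectral inequality is an additional unproved ingredient of comparable difficulty to the theorem itself, so as written this step also rests on something not established.
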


\begin{theorem}[Homogenization] \label{can_one_homo}
Assume that rank$[b|Ab|A^2b |\ldots |A^{n-1}b] = n$ and that $A$ satisfies (H2). Then, there exists a sequence of null control functions $\{f_\varepsilon\}_{\varepsilon\in (0,1)}$ to system \eqref{canonical_system}, such that 
$$f_\varepsilon\to f_0 \text{ strongly in } L^2((0,T)\times\omega), $$
where $f_0$ is a  null control to the homogenized system of \eqref{canonical_system}, given by
\begin{align}\label{eq_homogen_one}
\begin{dcases}
\p_t {u}-\mc{L}^0 {u} +\mathcal{C} {u}= e_11_{\omega} f_0, & (t,x)\in (0, T)\times (0,1), \\ 
{u}(t,0)={u}(t,1)=0, & t \in (0,T),\\
{u} (0,x)={u}^0& x \in (0,1),
\end{dcases}
\end{align}
where the homogenized operator $\mathcal{L}^0 := \left(\left(\ds \int_0^1\frac{1}{a(s)}ds\right)^{-1}\partial_{xx} \right) I_{n\times n}.$
\end{theorem}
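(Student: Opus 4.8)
The plan is to choose, for every $\varepsilon\in(0,1)$, the null control $f_\varepsilon\in L^2((0,T)\times\omega)$ of \emph{minimal norm} for \eqref{canonical_system}; this exists by \cref{thm_control_1}, which moreover gives $\|f_\varepsilon\|_{L^2((0,T)\times\omega)}\leqslant C(T)\|u^0\|_{(L^2(0,1))^n}$ with $C(T)$ independent of $\varepsilon$. From this uniform bound, every subsequence of $\{f_\varepsilon\}$ admits a further subsequence converging weakly in $L^2((0,T)\times\omega)$ to some limit, and it suffices to show that (i) every such weak limit $f_0$ is a null control for the homogenized system \eqref{eq_homogen_one}; (ii) $\|f_\varepsilon\|\to\|f_0\|$ along the subsequence, which upgrades the weak convergence to a strong one; and (iii) $f_0$ is the (unique) minimal-norm null control of \eqref{eq_homogen_one}. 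Point (iii) then forces the \emph{whole} family $\{f_\varepsilon\}$ to converge strongly to this common limit, which we call $f_0$.

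For (i), let $u^\varepsilon$ be the solution of \eqref{canonical_system} associated with $f_\varepsilon$. Energy estimates, using the uniform ellipticity \eqref{upper_lower_bound_1} and the uniform bound on $f_\varepsilon$, bound $u^\varepsilon$ in $L^\infty(0,T;(L^2(0,1))^n)\cap L^2(0,T;(H^1_0(0,1))^n)$ and $\p_t u^\varepsilon$ in $L^2(0,T;(H^{-1}(0,1))^n)$, uniformly in $\varepsilon$. By the Aubin--Lions lemma, along a further subsequence $u^\varepsilon\to u$ strongly in $(L^2((0,T)\times(0,1)))^n$, weakly in $L^2(0,T;(H^1_0)^n)$, with $\p_t u^\varepsilon\rightharpoonup\p_t u$. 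The limit equation is identified by the one-dimensional oscillating test function method: the flux $a^\varepsilon\p_x u^\varepsilon_i$ is bounded in $L^2$, and testing the $i$-th equation against $x\mapsto x+\varepsilon\chi(x/\varepsilon)$, where $\chi$ is the $1$-periodic corrector solving $a(y)\bigl(1+\chi'(y)\bigr)=\bigl(\int_0^1 a(s)^{-1}\,\rd s\bigr)^{-1}$, yields $a^\varepsilon\p_x u^\varepsilon_i\rightharpoonup\bigl(\int_0^1 a(s)^{-1}\,\rd s\bigr)^{-1}\p_x u_i$ in $L^2$. The coupling term $\mc{C}u^\varepsilon\to\mc{C}u$ and the source $e_1 1_\omega f_\varepsilon\rightharpoonup e_1 1_\omega f_0$ pass to the limit directly, so $u$ solves the homogenized system with operator $\mc{L}^0$, coupling $\mc{C}$, source $e_1 1_\omega f_0$ and initial datum $u^0$. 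Finally, since $u^\varepsilon\in C([0,T];(L^2(0,1))^n)$ with $\varepsilon$-uniform bounds coming from the equation and $u^\varepsilon(T,\cdot)=0$, one passes to the limit in the terminal condition to get $u(T,\cdot)=0$; hence $f_0$ is a null control for \eqref{eq_homogen_one}. In particular $\|\hat f_0\|\leqslant\|f_0\|$, where $\hat f_0$ denotes the minimal-norm null control of \eqref{eq_homogen_one}.

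For (ii)--(iii), I use the duality characterization of the minimal-norm control. Let $\varphi^\varepsilon$ solve the adjoint system $-\p_t\varphi^\varepsilon-\mc{L}^\varepsilon\varphi^\varepsilon+\mc{C}^{\text{tr}}\varphi^\varepsilon=0$ with homogeneous Dirichlet conditions and terminal datum $\psi_T$, and set
\[
J_\varepsilon(\psi_T)=\frac12\int_0^T\!\!\int_\omega\bigl|e_1^{\text{tr}}\varphi^\varepsilon\bigr|^2\,\rd x\,\rd t+\int_0^1 u^0\cdot\varphi^\varepsilon(0,\cdot)\,\rd x .
\]
This functional is strictly convex and coercive, its minimizer $\hat\psi^\varepsilon_T$ generates the minimal-norm control through $f_\varepsilon=e_1^{\text{tr}}\hat\varphi^\varepsilon\,1_\omega$, and $\tfrac12\|f_\varepsilon\|_{L^2((0,T)\times\omega)}^2=-\min J_\varepsilon$. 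Since the adjoint system has exactly the structure of \eqref{canonical_system}, it homogenizes the same way: for fixed $\psi_T$ one gets $\varphi^\varepsilon\to\varphi^0$, with $e_1^{\text{tr}}\varphi^\varepsilon\to e_1^{\text{tr}}\varphi^0$ strongly in $L^2((0,T)\times\omega)$ and $\varphi^\varepsilon(0,\cdot)\rightharpoonup\varphi^0(0,\cdot)$ in $(L^2(0,1))^n$, so $J_\varepsilon(\psi_T)\to J_0(\psi_T)$; with a little more work this gives the $\Gamma$-convergence of $\{J_\varepsilon\}$ to $J_0$. On the other hand, the \emph{uniform} observability inequality equivalent to \cref{thm_control_1} --- namely $\|\varphi^\varepsilon(0,\cdot)\|_{(L^2(0,1))^n}^2\leqslant C(T)\int_0^T\!\int_\omega|e_1^{\text{tr}}\varphi^\varepsilon|^2$ with $C(T)$ independent of $\varepsilon$ --- provides the equicoercivity of $\{J_\varepsilon\}$. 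Equicoercivity together with $\Gamma$-convergence yields $\min J_\varepsilon\to\min J_0$, hence $\|f_\varepsilon\|^2=-2\min J_\varepsilon\to-2\min J_0=\|\hat f_0\|^2$. Combining with step (i), $\|\hat f_0\|\leqslant\|f_0\|\leqslant\liminf\|f_\varepsilon\|=\|\hat f_0\|$, so $f_0=\hat f_0$ and $\|f_\varepsilon\|\to\|f_0\|$; in a Hilbert space, weak convergence plus convergence of norms gives strong convergence. As the limit $\hat f_0$ is the same for every subsequence, the entire family converges strongly, which proves the theorem.

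The main obstacle is to make the homogenization passages of steps (i)--(ii) rigorous, and in particular the duality argument of step (ii): the natural spaces on which $J_\varepsilon$ is coercive depend on $\varepsilon$ (the adjoint state associated with the optimal terminal datum need not be bounded in $(L^2(0,1))^n$), so one must exploit the $\varepsilon$-uniform observability from \cref{thm_control_1} --- together with the backward-in-time smoothing of the parabolic adjoint --- to obtain enough compactness of $\{\hat\varphi^\varepsilon\}$ away from $t=T$ to run the $\Gamma$-liminf argument and thereby conclude that the minimal \emph{costs}, not merely the controls, converge. One also has to check that the one-dimensional corrector identity for $\mc{L}^\varepsilon$ and the convergence of the adjoint states are uniform enough in time to justify passing to the limit simultaneously in the observation term and in the boundary term of $J_\varepsilon$.
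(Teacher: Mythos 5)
Your overall architecture coincides with the paper's: both proofs take the minimal-$L^2$-norm controls produced by the duality/variational characterization, use the uniform bound from \cref{thm_control_1} to extract a weak limit, show that any weak limit is a null control for \eqref{eq_homogen_one}, identify it with the unique minimal-norm control of the homogenized system, and finally upgrade weak to strong convergence by proving convergence of the norms. Where you diverge is in the execution of two sub-steps. First, to show that the weak limit is a null control, you homogenize the controlled \emph{state} equation directly (Aubin--Lions compactness plus the one-dimensional corrector identity for the flux $a^\varepsilon\p_x u^\varepsilon_i$, and a passage to the limit in the terminal condition $u^\varepsilon(T,\cdot)=0$). The paper instead never touches the state equation: it passes to the limit in the optimality identity \eqref{ep_char_iden}, using only the (cited) homogenization convergence of the \emph{adjoint} system \eqref{adjointsystem_w_H} for a fixed terminal datum $w_0\in L^2(0,1)$, and reads off \eqref{f_identity}, which is precisely the statement that $\bar f$ is a null control for \eqref{eq_homogen_one}. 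The paper's route is lighter (no corrector computation, no separate argument for the terminal condition), while yours is more self-contained and makes the homogenized state visible. Second, for the norm convergence you invoke equicoercivity plus $\Gamma$-convergence of $J_\varepsilon$ to $J_0$ to conclude $\min J_\varepsilon\to\min J_0$; the paper gets the same conclusion more directly from the identity $\int_0^T\!\int_\omega|\bar w_1^\varepsilon|^2 = -\int_0^1 u_0\,\bar w^\varepsilon(0,\cdot)$ (i.e.\ \eqref{ep_char_iden} tested with the optimal adjoint state itself), combined with strong $L^2$ convergence of $\bar w^\varepsilon(0,\cdot)$ and weak lower semicontinuity, then matches the limit with \eqref{homo_identity}.

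The one place where your write-up is genuinely under-justified is exactly the point you flag at the end: the claim that pointwise convergence $J_\varepsilon(\psi_T)\to J_0(\psi_T)$ upgrades ``with a little more work'' to $\Gamma$-convergence. Pointwise convergence only supplies the recovery-sequence half; the $\Gamma$-liminf inequality requires lower semicontinuity along the optimal terminal data $\hat\psi_T^\varepsilon$, which are controlled only in the $\varepsilon$-dependent observation norms $\|\cdot\|_{H_\varepsilon}$ and need not be bounded, or even lie, in $(L^2(0,1))^n$. The paper circumvents this by never compactifying the terminal data at all: it applies the uniform observability inequality \eqref{full-obs-est} on subintervals $(\tau,T)$ to bound $\bar w^\varepsilon(\tau,\cdot)$ in $(L^2(0,1))^n$ for every $\tau<T$, uses parabolic smoothing to upgrade this to $H^2\cap H^1_0$ bounds, and extracts strong convergence of $\bar w^\varepsilon$ on $(0,\tau)\times(0,1)$ and of $\bar w^\varepsilon(0,\cdot)$, which is all that the optimality identity requires. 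You name precisely these tools, so the gap is one of execution rather than of idea, but as written the $\Gamma$-convergence step would need to be either carried out in this fashion or replaced by the paper's direct argument.
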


\begin{rem} \label{rem_equiv}
Due to the equivalence mentioned in \cref{app_mcontrol}, a positive result for the controllability of \eqref{general_C_M}-\eqref{canonical_system} also implies controllability for the pair \eqref{general_B_M}-\eqref{canonical_system_n}. Analogously, a positive homogenization result for system \eqref{eq_homogen_one} implies the same for system \eqref{eq_homogen}. 
Thus, to prove that \cref{thm_control_m} and \cref{thm_homogen_m} hold, it is enough to prove \cref{thm_control_1} and \cref{can_one_homo}. 
\end{rem}

\section{Uniform Controllability: \cref{thm_control_1}} \label{sec_mainproof}

By the well-known duality argument \cite{dol_rus}, studying the controllability of \eqref{canonical_system} is equivalent to studying the observability problem for the following adjoint system
\begin{align} \label{adjointsystem_w}
\begin{dcases}
- \p_t w^\varepsilon- \mc{L}^\varepsilon w^\varepsilon+\mc{C}^* w^\varepsilon=0, & (t,x)\in (0, T)\times (0,1), \\ 
w^\varepsilon(t,0)=w^\varepsilon
(t,1)=0, & t \in (0,T),\\
w^\varepsilon(T,x)=w^\varepsilon_0(x), & x \in (0,1),
\end{dcases}
\end{align}
where $\mc{C}^*$ denotes the adjoint of the coupling matrix $\mc{C}$. Hence, to prove controllability of \eqref{canonical_system}, it is enough to obtain an observability estimate of the form
\[ \|w^\varepsilon(0,\cdot)\|_{(L^2(0,1))^n}\leqslant C(T) \int_{0}^T\int_{\omega}|w_1^\varepsilon|^2 \rd x \rd t, \]
for some positive constant $C(T)$, where $w^\varepsilon$ is the solution of the adjoint system \eqref{adjointsystem_w}. Henceforth, we will work to show an estimate of the above type.

In order to demonstrate our control strategy, we need information about the spectral behaviour of the operator $-\mathcal{L}^\varepsilon + \mc{C}^*$. For each $\varepsilon>0$, the spectrum will be divided into two parts, low frequency and high frequency. We will make precise the set of low frequencies and high frequencies at the end of the following section.

\subsection{Spectral Analysis} \label{ssec_stepi}
We will study the spectral problem for the operator $-\mathcal{L}^\varepsilon + \mc{C}^*$, given by 
\begin{align}\label{nd_spc_pro}
\begin{split}
-\mathcal{L}^\varepsilon\Phi^\varepsilon+\mathcal{C}^*\Phi^\varepsilon=\mu^\varepsilon \Phi^\varepsilon,&~~~\text{ in } (0,1), \\
\Phi^\varepsilon(0)=\Phi^\varepsilon(1)=0.&
\end{split}
\end{align}
In order to study the above eigenvalue problem, we recall a result from \cite{lopez_zuazua}, concerning the spectral gap for the following eigenvalue problem
\begin{align*} 
\begin{split}
-\partial_x(a^\varepsilon(x)\partial_x\varphi^\varepsilon)=\lambda^\varepsilon \varphi^\varepsilon, & \text{ in } (0,1),\\
\varphi_\varepsilon(0)=\varphi_\varepsilon(1)=0.&
\end{split}
\end{align*}
For each $\varepsilon>0,$  there exists a sequence of spectral pairs $\{(\lambda_k^\varepsilon, \varphi_k^\varepsilon)\}_{k\in \mathbb{N}}$ such that 
\[ 0<\lambda^\varepsilon_1<\lambda_2^\varepsilon<\cdots<\lambda_k^\varepsilon<\cdots \to \infty,\]
and $\{\varphi_k^\varepsilon\}_{k\in \mathbb{N}}$ form an orthonormal basis for $L^2(0,1)$. Also, by comparing with the spectrum of the operator $-\partial_{xx},$ it can be shown that $\lambda_k^\varepsilon\sim k^2.$ Using the above spectral pairs $\{(\varphi_k^\varepsilon,\lambda_k^\varepsilon)\}$, we will find the eigenvalues and eigenfunctions for the eigenvalue problem \eqref{nd_spc_pro}.
Let  $\varepsilon>0$ be fixed. Notice that, for each $k\in \mathbb{N},$ the $n$-dimensional sub space
\[V^\varepsilon_k=\text{span}\left\{\begin{pmatrix}
	\varphi^\varepsilon_k \\ 0 \\ \vdots \\ 0
 \end{pmatrix}, \begin{pmatrix}
	0 \\ \varphi^\varepsilon_k \\ \vdots \\ 0
 \end{pmatrix}, \ldots,
 \begin{pmatrix}
	0 \\ 0 \\ \vdots \\ \varphi^\varepsilon_k
 \end{pmatrix}
\right\}\]
is an invariant space for the operator $-\mathcal{L}^\varepsilon +C^*$. Furthermore, we also have
\[(L^2(0,1))^n= \oplus_{k\in \mathbb{N}} V^\varepsilon_k.\]
Hence, if we are able to show that, for each $k\in \mathbb{N}$, there are  $n$-linearly independent eigenvectors in the invariant space $V_k$, we  will be done.  

For a fixed $\varepsilon>0,$ consider the invariant space $V_k^\varepsilon$. Then, the spectral problem \eqref{nd_spc_pro} in the subspace $V_k^\varepsilon$ reduces to finding $(c_1,c_2,\cdots,c_n)\in \mathbb{R}^n$, such that
\begin{align*}
(-\mathcal{L}^\varepsilon+\mathcal{C}^*)\begin{pmatrix}
c_1 \varphi^\varepsilon_k & c_2 \varphi_k^\varepsilon & c_3 \varphi^\varepsilon_k &\cdots & c_n \varphi^\varepsilon _k
\end{pmatrix}^{tr}=\mu^\varepsilon_k \begin{pmatrix}
c_1 \varphi^\varepsilon_k & c_2 \varphi^\varepsilon_k & c_3 \varphi^\varepsilon_k &\cdots & c_n \varphi^\varepsilon _k
\end{pmatrix}^{tr}.
\end{align*}
Since $ \varphi_k^\varepsilon$ is an eigenfunction for $\partial_x(a^\varepsilon(x)\partial_x)$ corresponding to $\lambda_k^\varepsilon$, the above equation reduces to the following identity
\[
\begin{bmatrix}
\lambda^\varepsilon _k & 0 & 0 & \ldots & 0 \\
0 & \lambda^\varepsilon_k& 0 & \ldots & 0 \\ 
0 & 0 & \lambda^\varepsilon_k & \ddots & \vdots \\
\vdots & \vdots & \vdots & \ddots & \vdots\\
0 & 0 & 0 & \ldots & \lambda^\varepsilon_k
\end{bmatrix}
\begin{pmatrix}
c_1 \varphi^\varepsilon_k \\ c_2 \varphi^\varepsilon _k \\ c_3 \varphi^\varepsilon _k \\ \vdots \\ c_n \varphi^\varepsilon _k 
\end{pmatrix}
+ \begin{bmatrix}
0 & 1 & 0 & \ldots & 0 \\
0 & 0 & 1 & \ldots & 0 \\ 
0 & 0 & 0 & \ddots & \vdots \\
\vdots & \vdots & \vdots & \ddots & 1\\
a_1 & a_2 & a_3 & \ldots & a_n
\end{bmatrix} 
\begin{pmatrix}
c_1 \varphi_k^\varepsilon \\ c_2 \varphi_k^\varepsilon \\ c_3 \varphi_k^\varepsilon \\ \vdots \\ c_n \varphi_k^\varepsilon 
\end{pmatrix}
=
\mu_k^\varepsilon \begin{pmatrix}
c_1 \varphi_k^\varepsilon \\ c_2 \varphi_k^\varepsilon \\ c_3 \varphi_k^\varepsilon \\ \vdots \\ c_n \varphi_k^\varepsilon 
\end{pmatrix}.
\]
Now, let $\sigma$ be an eigenvalue of $\mc{C}^*$. Then, the above equation implies that $\sigma = \mu_k^\varepsilon - \lambda_k^\varepsilon$. Hence, writing the above as a system of linear equations, we get
\begin{align*}
-\sigma c_1 + c_2 & = 0, \\
-\sigma c_2 + c_3 & = 0, \\
\vdots \\
-\sigma c_{n-1} + c_n & = 0, \\
a_1 c_1 + \ldots + a_n c_n -\sigma c_n & = 0.
\end{align*}
A solution to the above system is given by $(c_1,c_2,\ldots,c_n)=\frac{\sigma^{m-1}}{\sqrt{\sum_{l=1}^{n-1} \sigma^{2(l-1)}}}(1,\sigma,\ldots,\sigma^{n-1})$, where $\sigma \in \text{Spec}(\mathcal{C}^*)$. Due to (H2), $A$ has $n$-distinct eigenvalues, and as $P^{-1} A P = \mc{C}$, we get that $\mc{C}$ has $n$-distinct eigenvalues. This implies that $\mc{C}^*$ has $n$-distinct eigenvalues as well, say $\sigma_1,\ldots,\sigma_n$. Then, we conclude that each $(\lambda^\varepsilon_k,\varphi^\varepsilon_k)$ gives rise to $n$-eigenpairs of the form 
\begin{align*}
\left\{ \left(\mu^\varepsilon_{k,i},\frac{\sigma_{i}^{m-1}}{\sqrt{\sum_{l=1}^{n-1} \sigma_i^{2(l-1)}}} \begin{pmatrix}
\varphi_k ^\varepsilon\\ \sigma_{i} \varphi_k^\varepsilon \\ \vdots \\ \sigma_{i}^{n-1} \varphi_k^\varepsilon
\end{pmatrix} \right) \right\}_{i=1,\ldots,n}=:\{(\mu_k^\varepsilon, \Phi_{k,i}^\varepsilon)\}_{i=1,2,\dots,n}.
\end{align*}
where $\mu^\varepsilon_{k,i}=\sigma_i+\lambda_k^\varepsilon.$  Due to (H2), $\mu_{k,i}^\varepsilon$ are strictly positive, for$ i=1,2,\dots,n,$ and $k\in \mathbb{N}$. Hence, we have shown that, for each $\varepsilon>0$, in each $V_k^\varepsilon$ there are $n$-linearly independent eigenvectors. We summarise this result in the following theorem.

\begin{theorem}
For each $\varepsilon\in(0,1),$  $\{(\mu_{k,i}^\varepsilon, \Phi_{k,i}^\varepsilon)\}_{(i=1,2,\dots,n;~~~ k \in \mathbb{N})}$ is a sequence  of eigenpair to the spectral problem \eqref{nd_spc_pro}, such that 
$\{\Phi_{k,i}^\varepsilon\}_{(i=1,2,\dots,n;~~~ k \in \mathbb{N})}$ forms an orthonormal basis for $(L^2(0,1))^n.$ 
\end{theorem}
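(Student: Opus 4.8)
The plan is to break the statement into its three assertions and treat them in turn: that each pair $(\mu_{k,i}^\varepsilon,\Phi_{k,i}^\varepsilon)$ solves \eqref{nd_spc_pro}, that the family $\{\Phi_{k,i}^\varepsilon\}$ is complete in $(L^2(0,1))^n$, and that it is orthonormal. The first assertion is a direct substitution. Writing $v_i:=(1,\sigma_i,\dots,\sigma_i^{n-1})^{\mathrm{tr}}$, the componentwise identity $-\mathcal{L}^\varepsilon(\sigma_i^{j}\varphi_k^\varepsilon)=\lambda_k^\varepsilon\sigma_i^{j}\varphi_k^\varepsilon$ gives $-\mathcal{L}^\varepsilon\Phi_{k,i}^\varepsilon=\lambda_k^\varepsilon\Phi_{k,i}^\varepsilon$, while $v_i$ is by construction an eigenvector of the companion matrix $\mathcal{C}^*$ for $\sigma_i$; adding the two contributions yields $(-\mathcal{L}^\varepsilon+\mathcal{C}^*)\Phi_{k,i}^\varepsilon=(\lambda_k^\varepsilon+\sigma_i)\Phi_{k,i}^\varepsilon=\mu_{k,i}^\varepsilon\Phi_{k,i}^\varepsilon$, and the homogeneous boundary conditions are inherited from $\varphi_k^\varepsilon(0)=\varphi_k^\varepsilon(1)=0$.

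For completeness I would exploit the decomposition $(L^2(0,1))^n=\oplus_{k}V_k^\varepsilon$, which holds because $\{\varphi_k^\varepsilon\}_k$ is an orthonormal basis of $L^2(0,1)$, so the subspaces $V_k^\varepsilon$ are mutually orthogonal and total. It then suffices to check that, for each fixed $k$, the $n$ vectors $\{v_i\}_{i=1}^n$ span $V_k^\varepsilon\cong\mathbb{C}^n$; this is exactly the non-vanishing of the Vandermonde determinant $\prod_{i<j}(\sigma_j-\sigma_i)$, guaranteed by (H2) since the $\sigma_i$ are the distinct eigenvalues of $A$ (hence of $\mathcal{C}^*$). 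Consequently $\{\Phi_{k,i}^\varepsilon\}_{k,i}$ is a total family.

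The orthonormality is the delicate point and the main obstacle, because $-\mathcal{L}^\varepsilon+\mathcal{C}^*$ is \emph{not} self-adjoint for the usual $L^2$ pairing: the companion matrix $\mathcal{C}^*$ is not normal, so its eigenvectors are not mutually orthogonal in $\mathbb{C}^n$ (indeed $\langle v_i,v_{i'}\rangle=\sum_{l=0}^{n-1}(\sigma_i\sigma_{i'})^{l}$ need not vanish). Orthonormality must therefore be read for the energy inner product that symmetrizes the matrix part. Concretely, since (H2) makes $\mathcal{C}^*$ diagonalizable with distinct real spectrum, I would write $\mathcal{C}^*=S\Lambda S^{-1}$ with $\Lambda$ real and set $M:=(SS^{\mathrm{tr}})^{-1}$, a symmetric positive-definite matrix satisfying $M\mathcal{C}^*=\mathcal{C}M$; equipping $(L^2(0,1))^n$ with $\langle u,v\rangle_M:=\int_0^1\langle Mu(x),v(x)\rangle_{\mathbb{C}^n}\,dx$ renders $-\mathcal{L}^\varepsilon+\mathcal{C}^*$ self-adjoint (the scalar part commutes with the constant $M$, and the matrix part is symmetric by the choice of $M$).

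With respect to $\langle\cdot,\cdot\rangle_M$ the verification is then clean: for $k\neq k'$ orthogonality is automatic from $\langle\varphi_k^\varepsilon,\varphi_{k'}^\varepsilon\rangle_{L^2}=0$, while for fixed $k$ self-adjointness forces $(\sigma_i-\sigma_{i'})\langle v_i,v_{i'}\rangle_M=0$, so the columns of $S$ are $M$-orthogonal and, after rescaling the normalizing constants to $\langle v_i,v_{i'}\rangle_M=\delta_{ii'}$, the $\{\Phi_{k,i}^\varepsilon\}_{k,i}$ form an orthonormal system; together with the completeness above this is an orthonormal basis. Since $M$ is a fixed positive-definite matrix, $\|\cdot\|_M$ is equivalent to the usual norm, so this is an orthonormal basis of the Hilbert space $(L^2(0,1))^n$ endowed with the natural energy inner product for this non-self-adjoint problem. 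The decisive step, which I would spell out most carefully, is the construction of the symmetrizer $M$ from (H2) and the recognition that genuine orthonormality holds precisely in $\langle\cdot,\cdot\rangle_M$.
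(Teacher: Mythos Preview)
Your treatment of the eigenpair verification and of completeness coincides with the paper's: the paper also argues by restricting $-\mathcal{L}^\varepsilon+\mathcal{C}^*$ to the invariant subspaces $V_k^\varepsilon$, uses the decomposition $(L^2(0,1))^n=\oplus_k V_k^\varepsilon$, and appeals to (H2) to obtain $n$ distinct eigenvalues of $\mathcal{C}^*$ (hence $n$ linearly independent eigenvectors in each $V_k^\varepsilon$). Your explicit mention of the Vandermonde determinant is essentially the same observation.

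Where you diverge from the paper is the orthonormality, and here you are in fact more careful than the paper itself. The paper's discussion preceding the theorem only establishes that the $\{\Phi_{k,i}^\varepsilon\}_i$ are \emph{linearly independent} in each $V_k^\varepsilon$; it does not address orthogonality between $\Phi_{k,i}^\varepsilon$ and $\Phi_{k,i'}^\varepsilon$ for $i\neq i'$ at fixed $k$. As you correctly point out, these vectors are \emph{not} orthogonal in the standard $(L^2(0,1))^n$ inner product, because the companion matrix $\mathcal{C}^*$ is not normal and $\langle v_i,v_{i'}\rangle_{\mathbb{R}^n}=\sum_{l=0}^{n-1}(\sigma_i\sigma_{i'})^l\neq 0$ in general. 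Your construction of the symmetrizer $M=(SS^{\mathrm{tr}})^{-1}$ from the diagonalization $\mathcal{C}^*=S\Lambda S^{-1}$ is the standard and correct remedy: it yields a fixed equivalent inner product in which $-\mathcal{L}^\varepsilon+\mathcal{C}^*$ is genuinely self-adjoint, and the family becomes an orthonormal basis in that sense. One small point to make explicit is that (H2), via the condition on $\min\{\mathrm{Spec}(A)\}$, implicitly forces the spectrum to be real, which is what allows $S$ and $M$ to be taken real; you used this when writing ``distinct real spectrum'' and it is worth stating as a consequence of (H2).

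In short, your proof is correct and, on the orthonormality point, supplies the justification that the paper's argument tacitly omits. The equivalence of $\|\cdot\|_M$ with the standard norm is all that is needed downstream (for the Parseval-type identities in the decay and observability steps), so your reading of ``orthonormal basis'' as orthonormal for $\langle\cdot,\cdot\rangle_M$ is the right one.
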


With out loss of generality, we can relabel the sequence $\{(\mu_{k,i}^\varepsilon, \Phi_{k,i}^\varepsilon)\}_{(i=1,2,\dots,n;~~~ k \in \mathbb{N})}$ and put them in the increasing order  and arrange them as   $\left\{\mu_k^\varepsilon, \Phi_k^\varepsilon \right\}_{k=1}^{\infty}$.  Since $\lambda_k^\varepsilon\sim k^2$, we have $\mu_k^\varepsilon\sim k^2$. Next, we have the following  spectral gap and positivity theorem for the sequence $\{(\mu_{k}^\varepsilon,\Phi_{k}^\varepsilon)\}_{k=1}^{\infty}.$ The proof is analogous to the proof presented in \cite{lopez_zuazua}; a brief outline of the same is given in \cref{app_eigen}.

\begin{prop}\label{spectral_thm_couple} Assume that $a \in W^{2,\infty}(\mathbb{R})$ and satisfies \eqref{upper_lower_bound_1}. Also assume that (H1) and  (H2) are satisfied. Then given any $\delta>0$, there exists a constant $C(\delta)$, independent of $\varepsilon$, such that  
\[\sqrt{\mu^\varepsilon_{k+1}}-\sqrt{\mu^\varepsilon_k}\geqslant \frac{\pi}{\sqrt{\bar{\tilde{a}}}}-\delta,~~~\forall k \varepsilon\leqslant  C(\delta),\]
where $\bar{\tilde{a}}$ is a suitable transformation of the function $a$; see equation \eqref{avg-tilde-a} for the precise definition. Furthermore, there exist $C_1$ and $\tilde{C}>0$, such that
\begin{align*}
\|\Phi_{k}^{\varepsilon}\|_{(L^2(\omega)^n)}\geqslant \tilde{C},~~\forall k \in \mathbb{N}, \text{ satisfying }k\varepsilon \leqslant  C_1.
\end{align*}
\end{prop}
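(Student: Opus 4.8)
The plan is to reduce \Cref{spectral_thm_couple} to the corresponding scalar result for the oscillating operator $-\partial_x(a^\varepsilon\partial_x)$, which is recalled from \cite{lopez_zuazua} and stated more precisely in \Cref{app_eigen}. The key observation is that the full spectrum $\{\mu_{k,i}^\varepsilon\}$ is, up to the additive constants $\sigma_1,\dots,\sigma_n\in\operatorname{Spec}(\mathcal C^*)$, just $n$ shifted copies of the scalar spectrum $\{\lambda_k^\varepsilon\}$: indeed $\mu_{k,i}^\varepsilon=\sigma_i+\lambda_k^\varepsilon$. So positivity is immediate — by (H2), $\min\operatorname{Spec}(A)=\min\operatorname{Spec}(\mathcal C^*)>-\pi^2/a_M$, and since $\lambda_1^\varepsilon\geqslant \pi^2/a_M$ by the lower bound $a^\varepsilon\geqslant a_m$... more carefully, by comparison with $-a_M\partial_{xx}$ one has $\lambda_1^\varepsilon\geqslant \pi^2/a_M$, hence $\mu_{k,i}^\varepsilon\geqslant \sigma_i+\pi^2/a_M>0$ for all $k,i$. (This is where (H2) is used in an essential way.)

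For the spectral gap, I would first recall from \cite{lopez_zuazua} that there is $\varepsilon$-uniform gap estimate for the scalar eigenvalues: for every $\delta>0$ there is $C(\delta)$ independent of $\varepsilon$ with $\sqrt{\lambda_{k+1}^\varepsilon}-\sqrt{\lambda_k^\varepsilon}\geqslant \pi/\sqrt{\bar{\tilde a}}-\delta$ whenever $k\varepsilon\leqslant C(\delta)$, where $\bar{\tilde a}$ is the averaged transformed coefficient defined in \eqref{avg-tilde-a}. Now relabel the doubly-indexed family $\{\mu_{k,i}^\varepsilon\}$ in increasing order as $\{\mu_k^\varepsilon\}$. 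Two consecutive elements of this relabelled sequence are of one of two types: either they come from the same shift $\sigma_i$ and two consecutive scalar eigenvalues (then the gap of the square roots is controlled, since $\sqrt{\sigma_i+\lambda_{k+1}^\varepsilon}-\sqrt{\sigma_i+\lambda_k^\varepsilon}$ is comparable to $\sqrt{\lambda_{k+1}^\varepsilon}-\sqrt{\lambda_k^\varepsilon}$ up to lower-order terms, using $\lambda_k^\varepsilon\sim k^2\to\infty$ so that the shift $\sigma_i$ becomes negligible for $k$ large, while for the finitely many small $k$ the gap is strictly positive anyway), or they come from different shifts $\sigma_i\neq\sigma_j$ but the \emph{same} $k$, in which case $\mu_k^\varepsilon$ and $\mu_{k+1}^\varepsilon$ differ by at least $\min_{i\neq j}|\sigma_i-\sigma_j|>0$ (distinctness from (H2)), which again gives a square-root gap bounded below as long as the eigenvalues stay in a bounded range, i.e. as long as $k\varepsilon$ is bounded. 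Combining, one gets an $\varepsilon$-uniform lower bound on $\sqrt{\mu_{k+1}^\varepsilon}-\sqrt{\mu_k^\varepsilon}$ of the claimed form, possibly after shrinking $C(\delta)$; a clean way to organize this is to prove the weaker statement that the gap is $\geqslant c_0>0$ uniformly for $k\varepsilon\leqslant C_0$, and then upgrade the constant $c_0$ to $\pi/\sqrt{\bar{\tilde a}}-\delta$ by noting that only the ``same-shift, large-$k$'' pairs can have gap close to the critical value $\pi/\sqrt{\bar{\tilde a}}$ and for those the scalar estimate applies directly.

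For the observability lower bound $\|\Phi_k^\varepsilon\|_{L^2(\omega)^n}\geqslant \tilde C$, recall that each eigenvector has the explicit form $\Phi_{k,i}^\varepsilon = c_i(\varphi_k^\varepsilon,\sigma_i\varphi_k^\varepsilon,\dots,\sigma_i^{n-1}\varphi_k^\varepsilon)^{\mathrm{tr}}$ with normalizing constant $c_i=\sigma_i^{m-1}\big/\sqrt{\sum_{l=1}^{n-1}\sigma_i^{2(l-1)}}$, so that $\|\Phi_{k,i}^\varepsilon\|_{L^2(\omega)^n}^2 = c_i^2\big(\sum_{l=0}^{n-1}|\sigma_i|^{2l}\big)\|\varphi_k^\varepsilon\|_{L^2(\omega)}^2$; the prefactor is a positive constant depending only on $A$ (via $\sigma_i$), uniformly bounded below over the finitely many $i$, so it suffices to have $\|\varphi_k^\varepsilon\|_{L^2(\omega)}\geqslant c>0$ uniformly for $k\varepsilon\leqslant C_1$. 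This last fact is exactly the scalar observability-of-eigenfunctions estimate from \cite{lopez_zuazua}, which I would invoke from \Cref{app_eigen}; it follows from the quantitative unique continuation / boundary-observability for the scalar oscillating eigenfunctions together with their uniform normalization $\|\varphi_k^\varepsilon\|_{L^2(0,1)}=1$.

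I expect the main obstacle to be the bookkeeping in the gap argument: after the relabelling into a single increasing sequence, one must rule out that shifts from different $\sigma_i$'s can ``interleave'' two nearby scalar eigenvalues in a way that makes $\sqrt{\mu_{k+1}^\varepsilon}-\sqrt{\mu_k^\varepsilon}$ arbitrarily small for $k\varepsilon$ bounded. The resolution is that in a bounded window $\mu_k^\varepsilon\leqslant R$ (equivalently $k\varepsilon\leqslant C(R)$ by $\mu_k^\varepsilon\sim k^2$ and $\varepsilon$-uniform constants) there are only finitely many eigenvalues, all at $\varepsilon$-uniform mutual distance bounded below — either by the scalar gap (same shift) or by $\min_{i\ne j}|\sigma_i-\sigma_j|$ (different shift, same $k$) or by a combination — and one then passes from a lower bound on $|\mu_{k+1}^\varepsilon-\mu_k^\varepsilon|$ to one on $|\sqrt{\mu_{k+1}^\varepsilon}-\sqrt{\mu_k^\varepsilon}|$ using $\sqrt{b}-\sqrt{a}=(b-a)/(\sqrt a+\sqrt b)$ and the upper bound $\mu_k^\varepsilon\leqslant R$. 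Matching the \emph{sharp} constant $\pi/\sqrt{\bar{\tilde a}}-\delta$ (rather than merely some positive constant) requires one extra observation: as $k\to\infty$ the shifts become negligible, so asymptotically the sharp scalar gap is recovered, and the finitely many exceptional low-frequency pairs can be absorbed by shrinking $C(\delta)$.
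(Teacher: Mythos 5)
Your overall reduction is the one the paper intends: the paper only proves the scalar statements in \cref{app_eigen} (following \cite{lopez_zuazua,castro_zuazua}) and then asserts that \cref{spectral_thm_couple} follows ``component-wise with minor modification,'' using precisely the two facts you isolate, namely $\mu_{k,i}^\varepsilon=\sigma_i+\lambda_k^\varepsilon$ and the explicit form $\Phi_{k,i}^\varepsilon=c_i(\varphi_k^\varepsilon,\sigma_i\varphi_k^\varepsilon,\dots,\sigma_i^{n-1}\varphi_k^\varepsilon)^{\mathrm{tr}}$. Your treatment of the lower bound on $\|\Phi_k^\varepsilon\|_{(L^2(\omega))^n}$ is correct and matches the paper: the prefactor depends only on the finitely many $\sigma_i$, so everything reduces to the scalar WKB estimate $\|\varphi_k^\varepsilon\|_{L^2(\omega)}\geqslant c$ of \cref{app_eigen}.

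The gap argument, however, contains a genuine error, located exactly where you anticipated the difficulty. For a consecutive pair in the merged increasing relabelling coming from \emph{different} shifts but the \emph{same} scalar index, one has $\sqrt{\sigma_i+\lambda_k^\varepsilon}-\sqrt{\sigma_j+\lambda_k^\varepsilon}=(\sigma_i-\sigma_j)\big/\big(\sqrt{\sigma_i+\lambda_k^\varepsilon}+\sqrt{\sigma_j+\lambda_k^\varepsilon}\big)\approx |\sigma_i-\sigma_j|\big/\big(2\sqrt{\lambda_k^\varepsilon}\big)$. Since $\lambda_k^\varepsilon\sim k^2$ uniformly in $\varepsilon$ while the admissible range $k\varepsilon\leqslant C(\delta)$ allows $k$ as large as $C(\delta)/\varepsilon$, these square-root spacings are of order $\varepsilon$ and are \emph{not} bounded below. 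Your claim that ``$k\varepsilon$ bounded'' keeps the eigenvalues in a bounded range is false: $\mu_k^\varepsilon\leqslant R$ is equivalent to $k\lesssim\sqrt{R}$, not to $k\varepsilon\leqslant C(R)$ (the paper itself uses $\mu^\varepsilon_{[D/\varepsilon]}\sim \varepsilon^{-2}$ in the decay step). Your closing fix also points at the wrong end of the spectrum: shrinking $C(\delta)$ discards frequencies that are \emph{high} relative to $1/\varepsilon$, whereas the pairs violating the gap are precisely the high-frequency clusters $\{\sigma_1+\lambda_k^\varepsilon,\dots,\sigma_n+\lambda_k^\varepsilon\}$ whose internal square-root spacings tend to zero. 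Consequently, for $n\geqslant 2$ the merged sequence $\{\mu_k^\varepsilon\}$ cannot satisfy $\sqrt{\mu_{k+1}^\varepsilon}-\sqrt{\mu_k^\varepsilon}\geqslant \pi/\sqrt{\bar{\tilde{a}}}-\delta$ on the stated range by any argument; the only version supported by the paper's component-wise reduction is that the gap holds separately along each of the $n$ shifted copies $\{\sigma_i+\lambda_k^\varepsilon\}_k$ (for which your ``same-shift'' computation is essentially correct), with the subsequent use of \cite[Corollary 3.6]{tenenbaum_tucsnak} then understood for a finite union of gapped families rather than a single gapped sequence. You should either prove and use the statement in that component-wise form, or note explicitly that the merged-sequence formulation is not reachable by this route. (Separately, your positivity step quotes $\lambda_1^\varepsilon\geqslant\pi^2/a_M$ ``by comparison with $-a_M\partial_{xx}$''; the form comparison actually yields the lower bound $\lambda_1^\varepsilon\geqslant a_m\pi^2$, so the constant needs care, though this matches an imprecision already present in (H2).)
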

Now, for each $\varepsilon\in (0,1),$ we will define the low frequency and high frequency spaces.  Let  $D=\min\{{C_1, C(\frac{\pi}{2\sqrt{\bar{\tilde{a}}}})}\}$. Define the set of low frequencies as the set $\{\Phi_k^\varepsilon: k\leq[D /\varepsilon]\},$ and the set of high frequency as the set $\{\Phi_k^\varepsilon:k > [D/\varepsilon]\}.$ Let us denote the vector space spanned by the low frequencies by
\[H_{[D/\varepsilon]}=\text{span}\{\Phi_k^\varepsilon: k \leq [D/\varepsilon]\},\]
and let $\Pi$ denote the usual projection operator.

In the first time interval $[0,T/3]$, we will prove the uniform null controllability for the low frequencies for the canonical system \eqref{canonical_system}. That is, we will show that, for each $\varepsilon>0$, there is a control  such that $\Pi _{H_{[D/\varepsilon]}} u_\varepsilon(T/3)=0$, and the cost of the control is uniformly bounded.   
Let us recall the adjoint system restricted to the first time interval
\begin{align} \label{adjointsystem_ist_step}
\begin{dcases}
- \p_t w^\varepsilon- \mc{L}^\varepsilon w^\varepsilon+\mc{C}^*
w^\varepsilon =0, & (t,x)\in (0, T/3)\times (0,1), \\ 
w^\varepsilon (t,0)=w^\varepsilon (t,1)=0, & t \in \left( 0, T/3\right),\\
w(T/3,x)=w_0^{\varepsilon}(x), & x \in (0,1).
\end{dcases}
\end{align}
By duality, it is enough to prove that the observability constant is independent of $\varepsilon$  for any $w_0^\varepsilon\in H_{[D/\varepsilon]}$. In particular we need to show the following result;
\begin{prop}
Let $w^\varepsilon =(w_1^\varepsilon,w_2^\varepsilon,\cdots, w_n^\varepsilon )^\text{tr}$ is the solution to the adjoint system \eqref{adjointsystem_ist_step}  with given data $w^0_\varepsilon \in H_{[D/\varepsilon]}.$ Then, given any $T>0,$ there exists a positive constant $C(T)$ independent of $\varepsilon,$ such that 
\begin{align} \label{observ_1st}
\int_0^{T/3}\int_\omega|w_1^\varepsilon|^2 \rd x \rd t \geqslant C(T)\|w^\varepsilon(0,\cdot)\|_{(L^2(0,1))^n}^2.
\end{align} 
\end{prop}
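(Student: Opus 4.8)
The plan is to expand the solution $w^\varepsilon$ of the adjoint system \eqref{adjointsystem_ist_step} in the orthonormal eigenbasis $\{\Phi_k^\varepsilon\}$ and exploit the uniform spectral gap and the uniform lower bound on $\|\Phi_k^\varepsilon\|_{L^2(\omega)^n}$ from \cref{spectral_thm_couple}, together with a classical block-moment / Lebeau--Robbiano-type argument. Since $w_0^\varepsilon \in H_{[D/\varepsilon]}$, write $w^\varepsilon(t,x) = \sum_{k \leqslant [D/\varepsilon]} a_k e^{-\mu_k^\varepsilon (t - T/3)} \Phi_k^\varepsilon(x)$ for suitable coefficients $a_k$ (note the backward-in-time nature: as $t$ decreases from $T/3$, the factor $e^{-\mu_k^\varepsilon(t-T/3)} = e^{\mu_k^\varepsilon(T/3 - t)}$ grows, which is harmless since the sum is finite and $\varepsilon$ is fixed within this step). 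The first component $w_1^\varepsilon$ is a linear combination of the first components of the $\Phi_k^\varepsilon$; crucially the $i$-th eigenpair in the block $V_k^\varepsilon$ has first component proportional to $\varphi_k^\varepsilon$ with nonzero proportionality constant $\sigma_i^{m-1}/\sqrt{\sum_l \sigma_i^{2(l-1)}}$, so observing $w_1^\varepsilon$ in $\omega$ genuinely sees every mode.

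The key steps, in order, are: (1) reduce \eqref{observ_1st} to proving, for all square-summable $(b_k)$ supported on $k \leqslant [D/\varepsilon]$, an inequality of the form $\int_0^{T/3}\!\int_\omega \big|\sum_k b_k e^{\mu_k^\varepsilon(T/3-t)}\varphi_k^\varepsilon(x)\big|^2\,dx\,dt \geqslant C(T)\sum_k |b_k|^2$, after absorbing the harmless (uniformly bounded) coupling constants and using the orthonormality of $\{\Phi_k^\varepsilon\}$ on the right-hand side; (2) invoke the uniform spectral gap $\sqrt{\mu_{k+1}^\varepsilon} - \sqrt{\mu_k^\varepsilon} \geqslant \pi/(2\sqrt{\bar{\tilde a}})$ valid precisely for $k \leqslant [D/\varepsilon]$ (by the choice of $D$), which lets one construct, via Ingham-type / biorthogonal-family estimates in time (as in \cite{lopez_zuazua}), a family biorthogonal to $\{e^{\mu_k^\varepsilon(T/3-\cdot)}\}$ in $L^2(0,T/3)$ with norms bounded uniformly in $\varepsilon$ — here positivity $\mu_k^\varepsilon > 0$ from (H2) is what makes the exponentials decay and keeps the biorthogonal family under control; (3) combine this with the uniform observation lower bound $\|\Phi_k^\varepsilon\|_{L^2(\omega)^n} \geqslant \tilde C$ (equivalently, since only the first component carries a fixed multiple of $\varphi_k^\varepsilon$ and the other components carry fixed multiples too, a uniform lower bound $\|\varphi_k^\varepsilon\|_{L^2(\omega)} \geqslant c > 0$) to extract each coefficient $b_k$ and sum up; (4) track that all constants produced depend only on $T$, $\bar{\tilde a}$, $a_m$, $a_M$, the spectrum of $\mc C^*$, and the gap — not on $\varepsilon$ — yielding the uniform $C(T)$.

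The main obstacle is step (2)--(3): separating the spatial oscillation (the $\varphi_k^\varepsilon(x)$ restricted to $\omega$) from the temporal exponentials and doing so with $\varepsilon$-uniform constants. The standard trick is to fix $x \in \omega$, view $t \mapsto \sum_k b_k e^{\mu_k^\varepsilon(T/3-t)}\varphi_k^\varepsilon(x)$ as an exponential sum in $t$, apply the biorthogonal family to isolate $b_k \varphi_k^\varepsilon(x)$ up to a uniformly bounded factor, then integrate in $x$ over $\omega$ and use $\|\varphi_k^\varepsilon\|_{L^2(\omega)} \geqslant c$; but one must be careful that the biorthogonal estimate is applied uniformly over the finitely many frequencies present and that the gap condition genuinely covers all of them — this is exactly why the low-frequency cutoff is placed at $[D/\varepsilon]$ with $D = \min\{C_1, C(\pi/(2\sqrt{\bar{\tilde a}}))\}$. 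A secondary technical point is handling the matrix coupling cleanly: one should diagonalize $\mc C^*$ once and for all (its eigenvalues $\sigma_i$ are distinct and $\varepsilon$-independent), so the $n\times n$ structure contributes only fixed invertible change-of-basis matrices and never interferes with the $\varepsilon$-uniformity.
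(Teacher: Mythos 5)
Your overall strategy is the same as the paper's: expand $w^\varepsilon$ in the eigenbasis $\{\Phi_k^\varepsilon\}$, note that the first component of each $\Phi_k^\varepsilon$ is a fixed nonzero multiple $c_{1k}$ of $\varphi_k^\varepsilon$, and combine the uniform spectral gap and the uniform lower bound $\|\varphi_k^\varepsilon\|_{L^2(\omega)}\geqslant \tilde C$ from \cref{spectral_thm_couple} with a moment inequality in time. The paper implements the time step by quoting \cite[Corollary 3.6]{tenenbaum_tucsnak} ready-made and separates the spatial variable by expanding over an orthonormal basis $\{\psi_j\}$ of $L^2(\omega)$ rather than arguing pointwise in $x$; these are essentially cosmetic differences from your explicit biorthogonal-family construction.

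Two points in your write-up need correcting. First, the sign of the exponential: the adjoint system \eqref{adjointsystem_ist_step} is backward with data prescribed at $T/3$, and along $\Phi_k^\varepsilon$ it satisfies $\partial_t w=\mu_k^\varepsilon w$, so $w^\varepsilon(t)=\sum_k\alpha_k e^{-\mu_k^\varepsilon(T/3-t)}\Phi_k^\varepsilon$ \emph{decays} as $t$ decreases from $T/3$, whereas your modes $e^{\mu_k^\varepsilon(T/3-t)}$ grow. Second, and consequently, the reduced inequality in your step (1) has the wrong right-hand side: after the substitution $\tau=T/3-t$, what must be proved is $\int_0^{T/3}\bigl\|\sum_k b_k e^{-\mu_k^\varepsilon\tau}\varphi_k^\varepsilon\bigr\|^2_{L^2(\omega)}\,\rd\tau\geqslant C(T)\sum_k b_k^2 e^{-2\mu_k^\varepsilon T/3}$, i.e.\ the \emph{decayed} coefficients, which (up to the factors $c_{1k}^2\|\varphi_k^\varepsilon\|^2_{L^2(\omega)}$, uniformly bounded below) equal $\|w^\varepsilon(0,\cdot)\|^2_{(L^2(0,1))^n}$. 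A lower bound by $\sum_k|b_k|^2$ of the raw coefficients would amount to uniform observability of the terminal datum $w^\varepsilon(T/3)$ over the whole low-frequency space, whose dimension grows like $n[D/\varepsilon]$; such a bound cannot hold uniformly in $\varepsilon$ and is not what the Tenenbaum--Tucsnak estimate (or any biorthogonal family for decaying real exponentials) delivers. With these two fixes your argument coincides with the paper's proof.
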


\begin{proof}
Since $w_0^\varepsilon \in H_{[D/\varepsilon]}$, we can express $w_0^\varepsilon$ as
\[w_0^\varepsilon=\sum_{k\leqslant [D/\varepsilon]}\alpha_k\Phi_k^\varepsilon, \quad \alpha_k\in \mathbb{R}. \]
Then $w^{\varepsilon},$ the solution of \eqref{adjointsystem_ist_step}, is given by
\[w^\varepsilon(t,x)=\begin{pmatrix}
w_1^\varepsilon\\ w_2^\varepsilon\\ \vdots\\ w_n^\varepsilon
\end{pmatrix}=\sum_{k\leqslant [D/\varepsilon]}\alpha_k  e^{-\mu_k^\varepsilon(T/3-t)}\Phi_k^\varepsilon(x) =\sum_{k\leqslant [D/\varepsilon]}\alpha_k  e^{-\mu_k^\varepsilon (T/3-t)}\begin{pmatrix}
c_{1k}\phi_k^\varepsilon\\ c_{2k} \phi_k^\varepsilon\\ \vdots \\c_{nk} \phi_k^\varepsilon
\end{pmatrix},\]
with appropriate constants $c_{i,k}$ for $i\in\{1,2,\dots,n\}$ and $$w^\varepsilon(0,t)=\sum_{k\leqslant [D/\varepsilon]}\alpha_k  e^{-\mu_k^\varepsilon T/3}\begin{pmatrix}
c_{1k}\phi_k^\varepsilon\\ c_{2k} \phi_k^\varepsilon\\ \vdots \\c_{nk} \phi_k^\varepsilon
\end{pmatrix}.$$
Hence, we want to prove the following inequality
\begin{align*}
\int_{0}^{T/3}\int_{\omega} |w_1^\varepsilon|^2 \rd x \rd t\geqslant C(T)\sum_{k\leqslant [D/\varepsilon]}\alpha_k^2  e^{-2\mu_k^\varepsilon T/3}.
\end{align*}
Let us calculate the left hand side of the above inequality
\begin{align*}
\int_0^{T/3} \|w_1^\varepsilon\|^2_{L^2(\omega)} \rd t & = \int_0^{T/3} \bigg\|\sum_{k\leqslant [D/\varepsilon]} \alpha_k c_{1k}\varphi_k^\varepsilon  e^{-\mu_k^\varepsilon (T/3-t)} \bigg\|_{L^2(\omega)}^2 \rd t \\
& = \int_0^{T/3}\bigg\|\sum_{k\leqslant [D/\varepsilon]} \alpha_kc_{1k}\varphi_k^\varepsilon  e^{-\mu_k^\varepsilon t}\bigg\|_{L^2(\omega)}^2 \rd t.
\end{align*}
Let $\{\psi_j\}_{j\in \mathbb{N}}$ be an orthonormal basis for $L^2(\omega)$, then
\begin{align*}
\int_0^{T/3} \bigg\|\sum_{k\leqslant [D/\varepsilon]} \alpha_k c_{1k}\varphi_k^\varepsilon  e^{-\mu_k^\varepsilon t}\bigg\|_{L^2(\omega)}^2 \rd t = \int_0^{T/3}\sum_{j=1}^\infty \bigg|\sum_{k\leqslant [D/\varepsilon]}\alpha_k c_{1k} \langle \varphi_k^\varepsilon, \psi_j \rangle_{L^2(\omega)} e^{-\mu _k^\varepsilon t} \bigg|^2 \rd t.
\end{align*}
Due to \Cref{spectral_thm_couple}, using \cite[Corollary 3.6]{tenenbaum_tucsnak}, we have the existence of $M_1$ and $M_2$, independent of $\varepsilon$, such that
$$M_1 e^{M_2/T} \int_0^{T/3} \bigg|\sum_{k=0}^\infty b_k e^{-\mu_k^\varepsilon t}\bigg|^2 \rd t\geqslant \sum_{k=0}^\infty b_k^2 e^{-2\mu_k^\varepsilon T/3},$$
for any $\{b_k\}\in l^2(\mathbb{R})$.
Let us choose $b_k$ as follows
\begin{align*}
b_k := \begin{cases}
		\alpha_k c_{1k} \langle \varphi_k^\varepsilon, \psi_j \rangle, & k \leqslant [D/\varepsilon], \\
		0, & k>[D/\varepsilon].
		\end{cases}
\end{align*}
Then we have
\begin{multline*}
M_1 e^{M_2/T} \int_0^{T/3}\|w_1^\varepsilon\|^2_{L^2(\omega)} \rd t \geqslant \sum_{j=0}^\infty\sum_{k\leqslant [D/\varepsilon]} e^{-2\mu_k^\varepsilon T/3}\alpha_k^2{c_{1k}}^2\langle \varphi_k^\varepsilon, \psi_j\rangle^2_{L^2(\omega)} \\
= \sum_{k\leqslant [D/\varepsilon]}  e^{-2\mu_k^\varepsilon T/3} \alpha_k^2 c_{1k}^2 \sum_{j=0}^{\infty} \langle \varphi_k^\varepsilon, \psi_j\rangle^2_{L^2(\omega)} = \sum_{k\leqslant [D/\varepsilon]}  e^{-2\mu_k^\varepsilon T/3} \alpha_k^2c_{1k}^2 \|\varphi_k^\varepsilon\|_{L^2(\omega)}^2 \\
\geqslant C(T) \sum_{k\leqslant [D/\varepsilon]}  e^{-2\mu_k^\varepsilon T/3} \alpha_k^2c_{1k}^2 \geqslant C(T) \min_{{k\leqslant [D/\varepsilon]}}\left\{c_{1k}^2\right\}   \sum_{k\leqslant [D/\varepsilon]}e^{-2\mu_k^\varepsilon T/3} \alpha_k^2.
\end{multline*}
Note that,  for  $ k\leqslant [D/\varepsilon] $, from the expression of eigenvector, 
$\ds{c_{1k}^2}$ takes values in the following set
$$\mathbb{E}^{\sigma}:=\left\{\frac{1}{ \sum_{i=0}^{n-1}\sigma^{2i}}:~\sigma \in \text{Spec} (A):=\{\sigma_1,\sigma_2,\cdots, \sigma_n\}\right\}.$$
Hence, the above inequality reduces to the desired estimate
\[ \int_0^{T/3}\int_{\omega}\|w_1^\varepsilon\|^2 \rd x \rd t \geqslant  C(T) \min\{\mathbb{E}^\sigma\} \sum_{k\leqslant [D/\varepsilon]} \alpha_k^2 e^{-2\mu_k^\varepsilon T/3}. \qedhere \]
\end{proof}
\begin{rem}\label{tby3bound}
From the above proposition, we conclude that for $\varepsilon>0,$ there exits a control  $f_1^\varepsilon \in L^2(0,1)$ to the system \eqref{canonical_system}  such that  $\Pi_{H_{[D/\varepsilon]}}u^\varepsilon(\cdot, T/3)=0$. The uniform observability constant in \eqref{observ_1st} guarantees that
$$\|f^\varepsilon_1\|_{L^2((0,T/3)\times \omega)} \leqslant C(T)\|u_0\|.$$
\end{rem}

\subsection{Decay} \label{ssec_stepii}
In this part of the solution, we do not put any control and let the solution decay. The system evolves freely on the time interval $(T/3,2T/3)$, with initial data given by the solution from the first step, at time $T/3$. Let us write $z_0^\varepsilon(x) := u^\varepsilon(T/3,x)$. Note that, $z_0^\varepsilon \in H_{[D/\varepsilon]}^\perp.$ The second step is nothing but proving the following proposition.
\begin{prop}
Let $u^\varepsilon$ be the solution to the following system
\begin{align*} 
\begin{dcases}
\p_t u^\varepsilon-\mc{L}^\varepsilon u^\varepsilon + \mc{C} u^\varepsilon=0, \quad & (t,x)\in (T/3,2T/3)\times (0,1), \\ 
u^\varepsilon (t,0) = u^\varepsilon (t,1)=0, & t \in (T/3,2T/3),\\
u_\varepsilon (T/3,x)=z^\varepsilon_0, & x \in (0,1),
\end{dcases}
\end{align*}
where $z^0_\varepsilon \in H_{[D/\varepsilon]}^\perp.$
Then, for any $t>0$, the following estimate holds
\begin{align*}
\|u^\varepsilon(t)\|^2_{(L^2(0,1))^2}\leqslant  e^{-2\mu_{[D/\varepsilon]}(t-T/3)} \|z^0_\varepsilon\|_{(L^2(0,1)}.
\end{align*}   
\end{prop}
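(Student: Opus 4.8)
\emph{Approach.} The estimate follows from the spectral decomposition of the operator $-\mc{L}^\varepsilon+\mc{C}$ governing the free evolution. By the analysis of \cref{ssec_stepi}, the relabeled eigenpairs $\{(\mu_k^\varepsilon,\Phi_k^\varepsilon)\}_{k=1}^\infty$ of this operator satisfy $0<\mu_1^\varepsilon\leqslant\mu_2^\varepsilon\leqslant\cdots\to\infty$ and $\{\Phi_k^\varepsilon\}$ is a basis of $(L^2(0,1))^n$. Since $z_0^\varepsilon\in H_{[D/\varepsilon]}^\perp$ and $H_{[D/\varepsilon]}=\mathrm{span}\{\Phi_k^\varepsilon:k\leqslant[D/\varepsilon]\}$, the expansion of $z_0^\varepsilon$ in this basis involves only the high-frequency modes. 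The plan is then: (i) expand $z_0^\varepsilon$; (ii) propagate each mode by the explicit scalar factor $e^{-\mu_k^\varepsilon(t-T/3)}$; (iii) apply a Parseval-type identity; (iv) bound every high-frequency exponential by the slowest one.

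\emph{Main computation.} Write $z_0^\varepsilon=\sum_{k>[D/\varepsilon]}\beta_k\Phi_k^\varepsilon$. The unique solution of the homogeneous system on $(T/3,2T/3)$ with this datum is
\[ u^\varepsilon(t,\cdot)=\sum_{k>[D/\varepsilon]}\beta_k\,e^{-\mu_k^\varepsilon(t-T/3)}\,\Phi_k^\varepsilon, \qquad t\in[T/3,2T/3]. \]
Taking $(L^2(0,1))^n$ norms and using that $\{\Phi_k^\varepsilon\}$ is a basis with Riesz constants independent of $\varepsilon$ (see the obstacle below),
\[ \|u^\varepsilon(t)\|_{(L^2(0,1))^n}^2\;\leqslant\;C\sum_{k>[D/\varepsilon]}\beta_k^2\,e^{-2\mu_k^\varepsilon(t-T/3)}, \]
with $C$ an absolute constant ($C=1$ with the normalization used in the statement).

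\emph{Conclusion.} Let $t>T/3$. Since $(\mu_k^\varepsilon)_k$ is nondecreasing, $\mu_k^\varepsilon\geqslant\mu_{[D/\varepsilon]}^\varepsilon$ for every $k>[D/\varepsilon]$, hence $e^{-2\mu_k^\varepsilon(t-T/3)}\leqslant e^{-2\mu_{[D/\varepsilon]}^\varepsilon(t-T/3)}$ for all such $k$. Factoring this out and using $\sum_{k>[D/\varepsilon]}\beta_k^2=\|z_0^\varepsilon\|_{(L^2(0,1))^n}^2$ (again up to the $\varepsilon$-uniform constant) yields
\[ \|u^\varepsilon(t)\|_{(L^2(0,1))^n}^2\;\leqslant\;e^{-2\mu_{[D/\varepsilon]}^\varepsilon(t-T/3)}\,\|z_0^\varepsilon\|_{(L^2(0,1))^n}^2, \]
which is the claimed inequality (we note that the right-hand side of the statement should carry the square, and that the norms are the $(L^2(0,1))^n$ norms).

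\emph{Main obstacle.} The one point requiring care is that $\mc{C}$ is a companion matrix, hence not self-adjoint, so the eigenfunctions $\Phi_{k,i}^\varepsilon\propto(\varphi_k^\varepsilon,\sigma_i\varphi_k^\varepsilon,\dots,\sigma_i^{n-1}\varphi_k^\varepsilon)^{\mathrm{tr}}$ belonging to a fixed $V_k^\varepsilon$ are not mutually orthogonal; $\{\Phi_k^\varepsilon\}$ is in general a Riesz basis rather than an orthonormal one. The argument nonetheless closes uniformly in $\varepsilon$ because, on each block $V_k^\varepsilon$, the change of basis between $\{\Phi_{k,i}^\varepsilon\}_{i}$ and the orthonormal system $\{e_i\varphi_k^\varepsilon\}_{i}$ is given by one and the same invertible (Vandermonde-type) matrix built from $\text{Spec}(A)=\{\sigma_1,\dots,\sigma_n\}$, whose condition number is an absolute constant; consequently the Parseval-type inequalities used above hold with $\varepsilon$-independent constants, and the decay rate $\mu_{[D/\varepsilon]}^\varepsilon$ is unaffected. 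One should also record that $H_{[D/\varepsilon]}^\perp$, being spanned by eigenfunctions of the free-evolution operator, is invariant under that evolution, so the expansion in Step (i) retains no low-frequency component at any later time; the remaining steps are a routine series manipulation.
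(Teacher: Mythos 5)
Your proof is correct and follows essentially the same route as the paper: expand $z_0^\varepsilon$ in the high-frequency eigenmodes, evolve each mode by the explicit factor $e^{-\mu_k^\varepsilon(t-T/3)}$, and bound every exponential by the slowest one, $e^{-\mu_{[D/\varepsilon]}^\varepsilon(t-T/3)}$. The only place you diverge is that you (rightly) treat $\{\Phi_k^\varepsilon\}$ as a Riesz basis with $\varepsilon$-uniform constants rather than an orthonormal one, whereas the paper simply invokes orthonormality; since the eigenvectors proportional to $(1,\sigma_i,\dots,\sigma_i^{n-1})^{\mathrm{tr}}$ of the non-normal matrix $\mc{C}^*$ within a fixed block $V_k^\varepsilon$ are not mutually orthogonal, your version — which costs only a harmless $\varepsilon$-independent multiplicative constant in the final estimate — is the more defensible one.
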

\begin{proof}
Since $z^0_\varepsilon\in H_{[D/\varepsilon]}^\perp$,  we can write $z^0_\varepsilon$ as
\[z^0_\varepsilon(x)=\sum_{k>[D/\varepsilon]}\alpha_k\Phi_k^\varepsilon(x).\]
Then the solution $u^\varepsilon$  can be written as
\[u^\varepsilon(t,x)=\sum_{k>[D/\varepsilon]}\alpha_k  e^{-\mu_k^\varepsilon (t-T/3)}\Phi_k^\varepsilon(x).\]
Finally, we estimate $\| u^\varepsilon(t) \|_{(L^2(0,1))^n}$ as follows: for $t\in (T/3,2T/3)$
\[ \|u^{\varepsilon}(t)\|^2_{(L^2(0,1))^n}= \sum_{k>[D/\varepsilon]}\alpha_k^2 e^{-2\mu_k^\varepsilon (t-T/3)}\leqslant  e^{-2\mu_{[D/\varepsilon]}(t-T/3)}\sum_{k>[D/\varepsilon]}\alpha_k^2=  e^{-2\mu_{[D/\varepsilon]}(t-T/3)}\|z^0_\varepsilon\|_{(L^2(0,1))^n},\]
which completes the proof.
\end{proof}

\subsection{Observability via Carelman estimate} \label{ssec_stepiii}
For the time interval $(2T/3,T)$, we will prove an observability result for the function $w$, solution of adjoint system \eqref{adjointsystem_w}, by using a parabolic Carleman estimate along with a standard energy estimate.
The first step in this direction is to remove the $a(\cdot)$ coefficient from the second order term in \eqref{adjointsystem_w}, for which we use a coordinate transformation inspired from \cite{lopez_zuazua}. We rewrite the system using the following change of variable
\begin{equation} \label{eq_change_of_var}
z := \int_0^{\frac{x}{\varepsilon}} \frac{1}{a(s)} \rd s \cdot \left( \int_0^{\frac{1}{\varepsilon} }\frac{1}{a(s)} \rd s\right)^{-1}. 
\end{equation}
Then, system  \eqref{adjointsystem_w} transforms to the following system
\begin{align}\label{1st_adjointsystem}
\begin{dcases}
- \p_t w-\frac{1}{\tilde{a}\left(\frac{z}{\varepsilon}\right)} \p_{zz} w+\mc{C}^*w=0, & (t,z) \in (0,T) \times (0,1),\\
w(t,0)=w(t,1)=0, & t \in (0,T),\\
w(T,z)=\hat{w}(z), & z \in (0,1),
\end{dcases}
\end{align}
where $w=(w_1,\ldots, w_n)^\text{tr}$, $\hat{w}(z)=w^0 \left(\varepsilon h^{-1} \left(\frac{z}{\delta(\varepsilon)} \right)\right)$, and $\tilde{a}$ is an appropriate transformation of $a$, defined in \eqref{atilde}.
As we will derive the estimate for a fixed
$\varepsilon,$ we write $\tilde{a}\left(\frac{z}{\varepsilon}\right)=r(z)$. Next, applying the following change of variable
\begin{equation} \label{eq_w_tildew}
y:=H(z)=\int_0^{z}\sqrt{r(s)} \rd s,
\end{equation}
system \eqref{1st_adjointsystem} reduces to the following
\begin{align}\label{2nd_adjoint_system}
\begin{dcases}
-\p_t  \tilde{w}- \p_{yy} \tilde{w} - g \p_y \tilde{w} + \mc{C}^* \tilde{w}=0, & (t,y) \in (0,T) \times (0,L),\\
\tilde{w}(t,0)=\tilde{w}(t,L)=0, & t \in (0,T),\\
\tilde{w}(y,T) = \check{w}(y) = \hat{w}(H^{-1}(y)), & y \in (0,L),
\end{dcases}
\end{align}
where $\ds L=\int_0^1\sqrt{r(s)} \rd s$, and $\ds  g(y)=\frac{r'(H^{-1}(y))}{2r(H^{-1}(y))^{\frac{3}{2}}}$. To transform the $\p_y \tilde{w}$ term into a zeroth order term, we further reduce the above system, using integrating factor as follows
\begin{equation} \label{eq_cov_wtov}
v(y,t)= e^{ -\ds\int_{0}^y \frac{g(s)}{2} \rd s } \tilde{w}(y,t).
\end{equation}
Then system \eqref{2nd_adjoint_system} transforms into
\begin{equation} \label{eq_pde_v}
\begin{dcases}
-\p_t v- \p_{yy}v+b(y)v+\mc{C}^*v=0, & (t,y) \in (0,T) \times (0,L),\\
v(t,0)=v(t,L)=0, & t \in (0,T),\\
v(0,y)=v^0(y), & y \in (0,L),
\end{dcases}
\end{equation}  
where $\ds v^0(y)= e^{-\ds\int_{0}^y \frac{g(s)}{2}\rd s} \check{w}(y)$ and the function $b(y)$ is given by 
\[b(y) := \left( \frac{a'(H^{-1}(y))}{4 a(H^{-1}(y))^\frac{3}{2}} \right)' + \left( \frac{a'(H^{-1}(y))}{4a(H^{-1}(y))^{\frac{3}{2}}} \right)^2.\]
Note that, we define the functions $\tilde{w}$ and $v$ on the whole time interval $(0,T)$, even though we are only interested in the region $(2T/3,T )$. This is done purely for ease of presentation and does not affect the analysis.

First, we will derive an observability estimate for the function $v := (v_1,\ldots,v_n)^\text{tr}$. For computational convenience, we rewrite equation \eqref{eq_pde_v} explicitly as follows
\begin{align*} \numberthis
\label{eq_carl_sys_nxn}
- \p_t \begin{pmatrix}
v_1 \\ v_2 \\ v_3 \\ \vdots \\ v_n 
\end{pmatrix}
- \p_{yy} \begin{pmatrix}
v_1 \\ v_2 \\ v_3 \\ \vdots \\ v_n
\end{pmatrix}
& + b(y) \begin{pmatrix}
v_1 \\ v_2 \\ v_3 \\ \vdots \\ v_n
\end{pmatrix}
+
\begin{bmatrix}
0 & 1 & 0 & \ldots & 0 \\
0 & 0 & 1 & \ldots & 0 \\ 
0 & 0 & 0 & \ddots & \vdots \\
\vdots & \vdots & \vdots & \ddots & 1\\
a_1 & a_2 & a_3 & \ldots & a_n
\end{bmatrix} 
\begin{pmatrix}
v_1 \\ v_2 \\ v_3 \\ \vdots \\ v_n
\end{pmatrix}
= \begin{pmatrix}
0 \\ 0 \\ 0 \\ \vdots \\ 0
\end{pmatrix}. 
\end{align*}
For an operator of the above type, we have the following parabolic Carleman estimate \cite{coron_book,fur_iman,tucsnak_weiss}, which we present without proof; see the cited references for the proof.

\begin{theorem}[Carleman estimate] \label{thm_para_carl_est}
Let $T>0$ be fixed. Let $\theta : (0,T) \rightarrow \mb{R}$ be the function given by
\[\theta(t) = \frac{1}{t(T-t)}.\]
Let $\omega \subset (0,L)$ be a non-empty open subset. Then, there exists a function $\psi \in C^2[0,L]$ such that
\[ \sup_{(0,L)} \psi <0, \qquad \inf_{(0,L) \setminus \omega} |\nabla \psi| >0,\]
and for any $ d \in \mb{R}$, there exist constants $s_0>0$ and $C>0$ such that the following holds
\begin{align*}
\int_0^T \int_0^L(s \theta)^d \left| e^{s\theta\psi} v \right|^2 + \int_0^T \int_0^L(s \theta)^{d-2} \left| e^{s\theta\psi} \p_y v \right|^2 & + \int_0^T \int_0^L(s \theta)^{d-4} \left| e^{s\theta\psi} \p_t v \right|^2 \numberthis \label{eq_para_carl_est}\\
+ \int_0^T \int_0^L(s \theta)^{d-4}  \left| e^{s\theta\psi} \p_{yy} v \right|^2 \leqslant C \big( \int_0^T \int_\omega(s \theta)^d & \left| e^{s\theta\psi} v \right|^2 + \int_0^T \int_0^L(s \theta)^{d-3} \left| e^{s\theta\psi} (\p_t v \pm \p_{yy} v) \right|^2 \big),
\end{align*}
for any $s \geqslant s_0$ and any $v \in C^2([0,T]\times [0,L])$ with $v(t,0)= v(t,L)=0$ on $(0,T)$.
\end{theorem}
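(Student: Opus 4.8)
The plan is to prove this by the classical Fursikov--Imanuvilov conjugation method (the estimate is a standard tool, quoted here from \cite{coron_book,fur_iman,tucsnak_weiss}, so only a sketch is given). First I would reduce to the scalar case: since $\p_t\pm\p_{yy}$ acts diagonally on the components of $v$ and each term of \eqref{eq_para_carl_est} is the sum of the corresponding scalar quantities, it suffices to establish the inequality for a scalar function $v$ and then add over the $n$ components. Note that the potential $b(y)$ and the coupling matrix $\mc{C}^*$ of \eqref{eq_pde_v} play no role here: when \cref{thm_para_carl_est} is applied to \eqref{eq_pde_v}, those bounded zeroth-order terms are simply moved into the right-hand side $\p_t v\pm\p_{yy}v$.

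The first genuine step is to fix the weight. One invokes the standard lemma producing $\psi_0\in C^2[0,L]$ with $\psi_0>0$ in $(0,L)$, $\psi_0=0$ on $\{0,L\}$, and $|\psi_0'|>0$ on $[0,L]\setminus\omega'$ for some open $\omega'$ with $\overline{\omega'}\subset\omega$; in one space dimension $\psi_0$ can even be written explicitly. Setting $\psi:=\psi_0-\max_{[0,L]}\psi_0-1$ makes $\sup_{(0,L)}\psi<0$, keeps $\psi'=\psi_0'$ so that $\inf_{(0,L)\setminus\omega}|\nabla\psi|>0$, and renders the outward normal derivative $\p_\nu\psi$ nonpositive at $y=0,L$. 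Writing $\sigma(t,y):=s\,\theta(t)\,\psi(y)$, the facts $\psi<0$ and $\theta\to+\infty$ as $t\to0^+,T^-$ force $e^{\sigma}$ and all its $t$-derivatives to vanish at $t=0$ and $t=T$, which is what will kill the time-boundary terms. Then I would conjugate: put $\phi:=e^{\sigma}v$, form the conjugated operator $L_\sigma\phi:=e^{\sigma}(\p_t+\p_{yy})(e^{-\sigma}\phi)$, and split $L_\sigma\phi=P_1\phi+P_2\phi+R\phi$ into its formally self-adjoint part $P_1$ (containing $\p_{yy}\phi$ and the multiplication terms $(\p_y\sigma)^2\phi$ and $-(\p_t\sigma)\phi$), its formally skew-adjoint part $P_2$ (containing $\p_t\phi$ and $-2(\p_y\sigma)\p_y\phi$), and the remaining lower-order terms $R$. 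The starting identity is then
\[
\|P_1\phi\|^2+\|P_2\phi\|^2+2\langle P_1\phi,P_2\phi\rangle=\|L_\sigma\phi-R\phi\|^2
\]
in $L^2((0,T)\times(0,L))$.

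The core of the proof is the evaluation of the cross term $2\langle P_1\phi,P_2\phi\rangle$ by integration by parts in $t$ and $y$. Using $\p_y\sigma=s\theta\psi'$ and $\p_t\theta=O(\theta^2)$, and collecting all the resulting contributions, one obtains for $s$ large a lower bound of the form $c\int(s\theta)^3|\psi'|^2|\phi|^2+c\int(s\theta)|\p_y\phi|^2-C\int_{\omega'}(s\theta)^3|\phi|^2$ plus boundary terms; the ``bad'' integral is supported in $\omega'$ because it is localised near the critical points of $\psi$, which by construction all lie inside $\omega$. The $y$-boundary contributions reduce to $\int_0^T(s\theta)|\p_\nu\phi|^2\,\p_\nu\psi$ evaluated at $y=0,L$, which is $\leqslant0$ by the sign of $\p_\nu\psi$ and is discarded, while the $t$-boundary contributions vanish by the decay noted above. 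Adding $\|P_1\phi\|^2$ and $\|P_2\phi\|^2$ back in upgrades the left-hand side so that it also controls $\int(s\theta)^{-1}(|\p_t\phi|^2+|\p_{yy}\phi|^2)$, and for $s\geqslant s_0$ the remainder $\|R\phi\|^2$, being of strictly lower order in powers of $s\theta$, is absorbed. A general exponent $d\in\mb{R}$ follows by rerunning the argument after multiplying through by a suitable power of $s\theta$, which shifts every term's power uniformly and produces exactly the stated $d,d-2,d-3,d-4$; finally undoing $\phi=e^{\sigma}v$ converts the weighted norms of $\phi,\p_y\phi,\p_t\phi,\p_{yy}\phi$ into those of $e^{\sigma}v$ and its derivatives (the extra $\p_y\sigma,\p_t\sigma$ factors produced being again lower order), and the sign $\pm$ merely records that the identical computation runs for $-\p_t-\p_{yy}$ and for $-\p_t+\p_{yy}$, which is the form relevant to the adjoint system.

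The hard part is precisely the bookkeeping inside the cross-term expansion: one has to verify that every genuinely negative contribution is either supported in $\omega$, of strictly lower order in powers of $s\theta$ (hence absorbable once $s$ is large), or a boundary term carrying the correct sign. This is exactly where the weight properties $\sup_{(0,L)}\psi<0$ and $\inf_{(0,L)\setminus\omega}|\nabla\psi|>0$, together with the degeneracy of $\theta$ at the endpoints of $(0,T)$, enter the argument.
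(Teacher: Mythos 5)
The paper does not prove \cref{thm_para_carl_est} at all: it is quoted verbatim as a known result with the proof deferred to \cite{coron_book,fur_iman,tucsnak_weiss}. So there is no in-paper argument to compare against; what you have written is a reconstruction of the standard Fursikov--Imanuvilov conjugation proof from those references, and the overall skeleton (reduction to the scalar operator $\p_t\pm\p_{yy}$, conjugation $\phi=e^{\sigma}v$, splitting into self-adjoint and skew-adjoint parts, expansion of the cross term, absorption of lower-order and boundary contributions, shift of the exponent to general $d$) is the right one.

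There is, however, one concrete step in your sketch that would fail as written: the construction of the weight. Taking $\psi:=\psi_0-\max\psi_0-1$ secures $\sup\psi<0$ and $|\psi'|>0$ off $\omega$, but it does not make the cross-term expansion close. The dominant contribution in $2\langle P_1\phi,P_2\phi\rangle$ at order $s^3\theta^3$ is, after integration by parts, $\ds 3\int (\p_y\sigma)^2(\p_{yy}\sigma)\,|\phi|^2 = 3s^3\!\int\theta^3(\psi')^2\psi''\,|\phi|^2$, and with a bare additive shift of $\psi_0$ the factor $(\psi')^2\psi''$ has no sign outside $\omega$; there is no larger positive term of the same order to absorb it, so your claimed lower bound $c\int(s\theta)^3|\psi'|^2|\phi|^2-C\int_{\omega'}(s\theta)^3|\phi|^2$ does not follow. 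The standard fix is the convexified weight $\psi:=e^{\lambda\psi_0}-e^{2\lambda\max\psi_0}$ with a second large parameter $\lambda$: then $(\psi')^2\psi''=\lambda^4(\psi_0')^4e^{3\lambda\psi_0}+O(\lambda^3)$ is positive and dominant off the critical set of $\psi_0$ once $\lambda$ is large, and this choice still satisfies the two properties asserted in the statement. Your sentence ``the bad integral is supported in $\omega'$ because it is localised near the critical points of $\psi$'' is precisely the point that requires this convexification; without it the localisation claim is false. The rest of the sketch (sign of the spatial boundary term via $\p_\nu\psi\leqslant0$, vanishing of the time-boundary terms from $\theta\to\infty$ and $\psi<0$, recovery of the $\p_t$ and $\p_{yy}$ terms from $\|P_1\phi\|^2+\|P_2\phi\|^2$, and the uniform shift of powers to reach general $d$) is accurate at the level of a sketch.
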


Note that, in the above statement we have taken $\omega$ as the integral region in the first term on the RHS. However, to be precise, it should be for some $\tilde{\omega} \subset (0,L)$. This is done to maintain convenient notation and can be done without loss of generality. We can get back $\omega$ from $\tilde{\omega}$ when we do the reverse coordinate transformation related to \eqref{eq_cov_wtov}.


Before proving the required observability estimate, we provide the following energy estimate for solutions of \eqref{eq_pde_v}, that can be obtained using standard energy arguments. 
\begin{prop} \label{prop_energy}
Let $v$ solve \eqref{eq_pde_v}. Then there exists $C'>0$, such that
\begin{equation} \label{eq_energy}
\| v_1 (t) \|_{L^2(0,L)}^2 + \ldots + \| v_n (t) \|_{L^2(0,L)}^2 \leqslant e^{C'(\tau-t)} \left( \| v_1(\tau) \|_{L^2(0,L)}^2 + \ldots + \| v_n(\tau) \|_{L^2(0,L)}^2 \right),
\end{equation}
for all $0 \leqslant t \leqslant \tau \leqslant T$.
\end{prop}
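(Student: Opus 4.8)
The plan is to prove the energy estimate \eqref{eq_energy} by a standard multiplier/Gr\"onwall argument applied to the backward parabolic system \eqref{eq_pde_v}. First I would fix $0 \leqslant t \leqslant \tau \leqslant T$ and introduce the energy functional $E(s) := \sum_{i=1}^n \| v_i(s) \|_{L^2(0,L)}^2$. Multiplying the $i$-th equation of \eqref{eq_pde_v} by $v_i$, integrating over $(0,L)$, integrating by parts in $y$ (the boundary terms vanish because $v(t,0)=v(t,L)=0$), and summing over $i$, one obtains an identity of the form
\[
-\tfrac{1}{2}\frac{\rd}{\rd s} E(s) + \sum_{i=1}^n \| \p_y v_i(s) \|_{L^2(0,L)}^2 + \int_0^L b(y)\, |v(s,y)|^2 \,\rd y + \int_0^L \langle \mc{C}^* v(s,y), v(s,y)\rangle \,\rd y = 0.
\]
Here I would note that $b \in L^\infty(0,L)$ (since $a \in W^{2,\infty}(\mathbb{R})$ is bounded below by $a_m>0$, the expression defining $b$ is a bounded function of $y$), and that $\mc{C}^*$ is a fixed constant matrix, so both of the last two terms are bounded in absolute value by $M\, E(s)$ for a constant $M$ depending only on $\|b\|_{L^\infty}$ and $|\mc{C}|$. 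Dropping the nonnegative gradient term then yields the differential inequality $\frac{\rd}{\rd s} E(s) \leqslant 2M\, E(s)$, valid on $(0,T)$.

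Next I would integrate this differential inequality. Since \eqref{eq_pde_v} runs \emph{backward} in time (the $-\p_t v$ sign), the natural direction is to integrate from $t$ up to $\tau$: from $\frac{\rd}{\rd s}\big(e^{-2Ms} E(s)\big) \leqslant 0$ we get $e^{-2Mt} E(t) \leqslant$ something, but to land on the stated inequality it is cleaner to observe $\frac{\rd}{\rd s}\big(e^{2M(\tau - s)} E(s)\big) = e^{2M(\tau-s)}\big(E'(s) - 2M E(s)\big) \leqslant 0$, hence $s \mapsto e^{2M(\tau-s)} E(s)$ is nonincreasing on $[t,\tau]$, so its value at $s=t$ dominates its value at $s=\tau$, i.e. $e^{2M(\tau - t)} E(t) \geqslant E(\tau)$ --- which is the \emph{wrong} direction. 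So instead I would integrate the inequality $E'(s)\leqslant 2M E(s)$ directly: $\frac{\rd}{\rd s}\big(e^{-2Ms}E(s)\big)\leqslant 0$, so $e^{-2Mt}E(t) \geqslant e^{-2M\tau}E(\tau)$ is again the wrong sign; the correct reading is that with the $-\p_t$ convention the energy actually grows as $s$ decreases, so from $E'(s) \geqslant -2M E(s)$ (obtained by keeping the gradient term and bounding below) we get $\frac{\rd}{\rd s}\big(e^{2Ms}E(s)\big)\geqslant 0$, giving $e^{2Mt}E(t) \leqslant e^{2M\tau}E(\tau)$, i.e. $E(t) \leqslant e^{2M(\tau-t)}E(\tau)$. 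Setting $C' := 2M$ gives exactly \eqref{eq_energy}.

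The argument above is formal in that it assumes $v$ is smooth enough to multiply, integrate by parts, and differentiate $E$ under the integral; to make it rigorous I would carry it out first for smooth solutions (or equivalently for a spectral-Galerkin approximation, using the eigenbasis $\{\Phi_k^\varepsilon\}$ from the earlier spectral analysis, for which every step is an honest finite-dimensional ODE computation) and then pass to the limit / use density of smooth data in $(L^2(0,L))^n$ together with the continuous dependence of solutions on their data. The one point requiring a little care --- and what I expect to be the main (minor) obstacle --- is getting the \emph{direction} of the Gr\"onwall inequality right given the backward-in-time sign convention of \eqref{eq_pde_v}: one must be careful that it is $E(t)$ for the \emph{earlier} time $t$ that is controlled by $E(\tau)$ at the \emph{later} time $\tau$, consistent with the fact that \eqref{eq_pde_v} is well-posed backward in time, so information flows from $\tau$ down to $t$. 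Everything else is routine: the boundedness of $b$ follows from $a_m \leqslant a \leqslant a_M$ and $a\in W^{2,\infty}$, and the coupling contributes only a bounded constant matrix, so no $\varepsilon$-dependence enters $C'$ at this stage (though for the final application one only needs \emph{some} finite $C'$, as the $\varepsilon$-uniformity in the third time interval is recovered from the smallness of the data coming from the decay step).
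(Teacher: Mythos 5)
Your proposal is correct and is precisely the ``standard energy argument'' that the paper invokes for \cref{prop_energy} without writing out: multiply by $v$, integrate by parts, bound the zeroth-order terms by $M E(s)$ using $\|b\|_{L^\infty}<\infty$ and the constancy of $\mc{C}^*$, and apply Gr\"onwall in the direction $E'(s)\geqslant -2M E(s)\Rightarrow E(t)\leqslant e^{2M(\tau-t)}E(\tau)$. The only slip is the intermediate claim that ``dropping the nonnegative gradient term yields $E'(s)\leqslant 2M E(s)$'' (discarding a nonnegative term from the right-hand side of the energy identity gives a \emph{lower} bound on $E'$, not an upper one), but you correctly abandon that route and the final chain of inequalities is the right one.
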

Then we have the following observability estimate.

\begin{prop}
There exist constants $C, C', C_3>0$ such that the following is satisfied
\begin{equation} \label{eq_obs_est_v_n}
\left\| v_1 \left(\frac{2T}{3} \right) \right\|_{L^2(0,L)}^2 + \ldots + \left\| v_n \left(\frac{2T}{3} \right) \right\|_{L^2(0,L)}^2 \leqslant e^{C_3 (s_2 + C \| b \|_{L^\infty}^{\frac{2}{3}})} e^{\frac{2C'T}{3}} \frac{12}{T} \int_0^T \int_\omega v_1^2 \rd y \rd t,
\end{equation}
for every solution $v$ of \eqref{eq_pde_v}.
\end{prop}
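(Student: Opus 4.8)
The plan is to deduce \eqref{eq_obs_est_v_n} from two ingredients: a global Carleman inequality for the whole cascade \eqref{eq_carl_sys_nxn} in which only the first component $v_1$ is observed and whose admissible range of the parameter $s$ depends on $\|b\|_{L^\infty}$ only through $\|b\|_{L^\infty}^{2/3}$ — this is the weighted inequality \eqref{eq_CE_nxn_B6} — together with the energy estimate \eqref{eq_energy}. The cascade inequality will itself be obtained from the scalar estimate of \Cref{thm_para_carl_est} applied line by line to \eqref{eq_carl_sys_nxn}, exploiting the relations $v_{i+1}=\p_t v_i+\p_{yy} v_i-b\,v_i$ ($1\le i\le n-1$) built into the system.

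First I would establish the cascade Carleman inequality. Write $\theta$, $\psi$ as in \Cref{thm_para_carl_est}, set $\rho=e^{s\theta\psi}$, and rewrite the $i$-th line of \eqref{eq_carl_sys_nxn} as $\p_t v_i+\p_{yy}v_i=b\,v_i+(\mc{C}^*v)_i$, where $(\mc{C}^*v)_i=v_{i+1}$ for $i<n$ and $(\mc{C}^*v)_n=\sum_{j=1}^n a_j v_j$. Applying \eqref{eq_para_carl_est} to each of the $n$ lines with a common exponent $d$ and summing, the right-hand side consists of the local terms $\int_0^T\!\int_\omega(s\theta)^d\rho^2|v_i|^2$, the coupling terms $\int_0^T\!\int_0^L(s\theta)^{d-3}\rho^2|v_{i+1}|^2$ (and $\int_0^T\!\int_0^L(s\theta)^{d-3}\rho^2|v_j|^2$ from the last line), and the zeroth-order terms $\|b\|_{L^\infty}^2\int_0^T\!\int_0^L(s\theta)^{d-3}\rho^2|v_i|^2$. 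The coupling terms are absorbed into the leading left-hand term $\int_0^T\!\int_0^L(s\theta)^d\rho^2|v_\ell|^2$ of the relevant component, since $(s\theta)^{-3}$ is small for $s$ large (recall $\theta\ge 4/T^2$); the zeroth-order terms are absorbed in the same way as soon as $(s\theta)^3\ge C\|b\|_{L^\infty}^2$ on $(0,T)$, i.e. as soon as $s\ge s_2+C\|b\|_{L^\infty}^{2/3}$ — this is exactly where the $\|b\|_{L^\infty}^{2/3}$ (hence, in the end, the $\varepsilon^{-4/3}$) is born. It then remains to remove the local terms $\int_0^T\!\int_\omega(s\theta)^{(\cdot)}\rho^2|v_i|^2$ with $i\ge2$, including the higher-power ones generated along the way: for each such term, substitute $v_i=\p_t v_{i-1}+\p_{yy}v_{i-1}-b\,v_{i-1}$, insert a cutoff $\chi$ supported in $\omega$ and equal to $1$ on a slightly smaller set (a nested chain keeps the observation set equal to $\omega$), and integrate by parts in $t$ and $y$ so that the derivatives fall on $\chi^2(s\theta)^{(\cdot)}\rho^2 v_i$; the resulting $v_i$-derivative terms are absorbed into the left-hand side for $v_i$, and only weighted local integrals of $v_{i-1}$ survive. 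Iterating $i=n,n-1,\dots,2$ leaves a single weighted local integral of $v_1$, which since $\sup_{(0,L)}\psi<0$ is bounded by $C\int_0^T\!\int_\omega v_1^2$. Fixing $s=s_2+C\|b\|_{L^\infty}^{2/3}$ yields
\[
\sum_{i=1}^n\int_0^T\!\!\int_0^L(s\theta)^d\,\rho^2\,|v_i|^2\,\rd y\,\rd t\ \leqslant\ C\int_0^T\!\!\int_\omega v_1^2\,\rd y\,\rd t .
\]

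To conclude, with $s$ fixed as above I would restrict the left-hand side to the compact subinterval $I:=(2T/3,3T/4)$, on which $\theta$ is bounded above and below and hence $(s\theta)^d e^{2s\theta\psi(y)}\geqslant\kappa>0$ on $\overline I\times[0,L]$ with $\kappa\geqslant e^{-C_3(s_2+C\|b\|_{L^\infty}^{2/3})}$ — the lower bound on the weight produces the exponential cost. This gives
\[
\int_I\Big(\|v_1(t)\|_{L^2(0,L)}^2+\dots+\|v_n(t)\|_{L^2(0,L)}^2\Big)\rd t\ \leqslant\ e^{C_3(s_2+C\|b\|_{L^\infty}^{2/3})}\int_0^T\!\!\int_\omega v_1^2\,\rd y\,\rd t .
\]
Finally, by \eqref{eq_energy}, for each $t\in I$ one has $\sum_i\|v_i(2T/3)\|_{L^2(0,L)}^2\leqslant e^{C'(t-2T/3)}\sum_i\|v_i(t)\|_{L^2(0,L)}^2\leqslant e^{2C'T/3}\sum_i\|v_i(t)\|_{L^2(0,L)}^2$ (crudely bounding $t-2T/3$); integrating in $t$ over $I$, whose length is $T/12$, and combining with the previous display yields \eqref{eq_obs_est_v_n} up to renaming constants.

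The main obstacle is the cascade Carleman inequality: performing the integrations by parts while keeping careful track of which subset of $(0,L)$ each term lives on, organising the nested cutoffs and the order of the eliminations so that every coupling term and every secondary local term is absorbed for one common value of $s$, and — most importantly for the homogenization application — making sure that the dependence on $\|b\|_{L^\infty}$ enters only through the threshold $s_2+C\|b\|_{L^\infty}^{2/3}$ and not through any other constant. If \eqref{eq_CE_nxn_B6} has already been established, this step is a citation and the only remaining work is the last paragraph — the weight removal combined with \eqref{eq_energy}.
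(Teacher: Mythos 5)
Your proposal is correct and follows essentially the same route as the paper: component-wise application of \cref{thm_para_carl_est}, absorption of the coupling and $b\,v_i$ terms for $s\geqslant s_1+\bar C\|b\|_{L^\infty}^{2/3}$, recursive elimination of the local observation terms in $v_n,\dots,v_2$ via the cascade relations with cutoffs and integration by parts, and finally restriction to $(2T/3,3T/4)$ plus the energy estimate \eqref{eq_energy} to produce the $e^{C_3(s_2+C\|b\|_{L^\infty}^{2/3})}\,e^{2C'T/3}\,\tfrac{12}{T}$ cost. The only cosmetic difference is that you take a common Carleman exponent $d$ for all components and absorb the coupling terms by the gain in $s^{-3}$, whereas the paper staggers the exponents $d_k=1+3(n+1-k)$ and uses an $\epsilon$-weighted sum; both devices achieve the same absorption.
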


To prove the above observability result, we will use \cref{thm_para_carl_est}. The proof is inspired from \cite{boyer}, where the author considers two equations with zeroth order coupling and presents a method to obtain observability with only one component. For our problem, we have $n$-equations with zeroth order coupling and an extra term in the equation \eqref{eq_carl_sys_nxn}, given by $b(y) v$. As the proof is technical, we provide an outline here. 

\begin{itemize}

\item[Step A.] First, we use \eqref{eq_para_carl_est} for each of the components $v_i,\ i=1,\ldots,n$. This gives us terms containing $v_1,\ldots,v_n$ over the observed region $\omega$ as well as the bulk terms with the operator $(\p_t + \p_{yy})$ on the RHS, while the LHS contains weighted energy type integrals for $v_1,\ldots,v_n$. Then we make specific choices for the parameter $d$, present in \cref{thm_para_carl_est}, for each component $v_i$, and add the component-wise estimates. The choice of the parameter $d_i$ for each $v_i$ is crucial for absorbing the non-essential terms.

\item[Step B.] Evaluating the $(\p_t + \p_{yy})$ term using \eqref{eq_carl_sys_nxn} results in the estimates containing the $b(y) v$ and zeroth-order terms from the coupling matrix. Next, we absorb these terms into the LHS appropriately, by choosing a large enough parameter $s$ provided by \cref{thm_para_carl_est}.

\item[Step C.] At this stage, on the RHS, we are only left with integrals with $v_i,\ i=1,\ldots,n$, over the region $\omega$. Our next goal is to bound the $v_2,\ldots,v_n$ terms by only $v_1$. This is done in a recursive manner---first we estimate the $v_n$ term by a $v_{n-1}$ term, then the $v_{n-1}$ term by a $v_{n-2}$ term, and continue till the last step, where we estimate the $v_2$ term by a $v_1$ term. This step crucially uses the fact that the coupling matrix in \eqref{eq_carl_sys_nxn} is cascade-type, due to which we can write $v_n$ exclusively in terms of $v_{n-1}$, and so on and so forth.

\item[Step D.] Now that we have the appropriate weighted integrals on both the sides, we estimate the Carleman weights from above and below. Finally, using the energy estimate from \eqref{prop_energy}, we obtain the observability result \eqref{eq_obs_est_v_n}.

\end{itemize}


\begin{proof}
For convenience, we will use the following notation 
\begin{equation} \label{eq_def_JI}
J(d,f) := \int_0^T \int_0^L (s \theta)^d \left| e^{s \theta \psi} f \right|^2 \rd y \rd t, \qquad I(d,f) := \int_0^T \int_\omega (s \theta)^d \left| e^{s \theta \psi} f \right|^2 \rd y \rd t.
\end{equation}

\noindent \underline{\textbf{Step A}}: First, we use \cref{thm_para_carl_est} to get component-wise estimates for $v_1, \ldots, v_n$. We will drop some terms from the LHS of the estimate for $v_1$ as those are not needed to show observability. In particular, using \eqref{eq_para_carl_est} gives us
\begin{align*}
&\bullet J(d_1,v_1) + J(d_1-2,\p_y v_1) \leqslant C I(d_1,v_1) + C J(d_1 -3, \p_t v_1 + \p_{yy} v_1), \\
&\bullet J(d_2,v_2) + J(d_2-2, \p_y v_2) + J(d_2-4,\p_t v_2) + J(d_2-4,\p_{yy} v_2)\\ &\hspace{6cm} \leqslant C I(d_2,v_2) + C J(d_2-3,\p_t v_2 + \p_{yy} v_2), \\
&\hspace{6.5cm} \vdots \\
&\bullet J(d_{n-1},v_{n-1}) + J(d_{n-1}-2, \p_y v_{n-1}) + J(d_{n-1}-4,\p_t v_{n-1}) + J(d_{n-1}-4,\p_{yy} v_{n-1}) \\
&\hspace{6cm}\leqslant C I(d_{n-1},v_{n-1}) + C J(d_{n-1}-3, \p_t v_{n-1} + \p_{yy} v_{n-1}), \\
&\bullet J(d_n,v_n) + J(d_n-2, \p_y v_n) + J(d_n-4,\p_t v_n) + J(d_n-4,\p_{yy} v_n)\\ & \hspace{6cm} \leqslant C I(d_n,v_n) + C J(d_n-3,\p_t v_n + \p_{yy} v_n).
\end{align*} 
Then we use \eqref{eq_carl_sys_nxn} to evaluate the $(\p_t + \p_{yy})$ terms on the RHS, to get
\begin{align*}
&\bullet J(d_1,v_1) + J(d_1-2,\p_y v_1) \leqslant C I(d_1,v_1) + C J(d_1 -3, b v_1 + v_2), \\
&\bullet J(d_2,v_2) + J(d_2-2, \p_y v_2) + J(d_2-4,\p_t v_2) + J(d_2-4,\p_{yy} v_2) \leqslant C I(d_2,v_2) + C J(d_2-3, b v_2 + v_3), \\
&\hspace{7cm} \vdots \\
&\bullet J(d_{n-1},v_{n-1}) + J(d_{n-1}-2, \p_y v_{n-1}) + J(d_{n-1}-4,\p_t v_{n-1}) + J(d_{n-1}-4,\p_{yy} v_{n-1}) \\
&\hspace{4.5cm}\leqslant C I(d_{n-1},v_{n-1}) + C J(d_{n-1}-3, b v_{n-1} + v_n), \\
&\bullet J(d_n,v_n) + J(d_n-2, \p_y v_n) + J(d_n-4,\p_t v_n)  + J(d_n-4,\p_{yy} v_n) \\& \hspace{4.5cm}\leqslant C I(d_n,v_n) + C J(d_n-3, b v_n + \sum_{i=1}^n a_i v_i).
\end{align*}
Now we will make a choice for the $d_i$'s that will allow us to do the necessary absorption of terms on the RHS. We set $d_k = 1 + 3(n+1 - k)$, for each $k=1,\ldots,n$. Then the latest estimates imply that
\begin{align*}
&\bullet J(3n+1,v_1) + J(3n-1,\p_y v_1) \leqslant C I(3n+1,v_1) + C J(3n-2, b v_1 + v_2), \\
&\bullet J(3n-2,v_2) + J(3n-4, \p_y v_2) + J(3n-6,\p_t v_2) + J(3n-6,\p_{yy} v_2)\\
&\hspace{4cm} \leqslant C I(3n-2,v_2) + C J(3n-5, b v_2 + v_3), \\
& \hspace{6cm}\vdots \\
&\bullet J(7,v_{n-1}) + J(5, \p_y v_{n-1}) + J(4,\p_t v_{n-1}) + J(4,\p_{yy} v_{n-1}) \leqslant C I(7,v_{n-1}) + C J(4, b v_{n-1} + v_n), \\
&\bullet J(4,v_n) + J(2, \p_y v_n) + J(0,\p_t v_n) + J(0,\p_{yy} v_n) \leqslant C I(4,v_n) + C J(1, b v_n + \sum_{i=1}^n a_i v_i).
\end{align*}

\noindent \underline{\textbf{Step B}}: Next, we will add the above individual estimates and then absorb the $J(\cdot,bv_i+v_{i+1})$ into the LHS. For this purpose, we consider the following algebraic expression
\[ \sum_{i=1}^n \epsilon^{n-i} \cdot (\text{estimate for } v_i), \quad \epsilon > 0,\]
to get the following estimate
\begin{multline*}
\epsilon^{n-1} J(3n+1,v_1) + \epsilon^{n-1} J(3n-1,\p_y v_1) + \epsilon^{n-2} J(3n-2,v_2) + \epsilon^{n-2} J(3n-4, \p_y v_2) + \epsilon^{n-2} J(3n-6,\p_t v_2) \\
+ \epsilon^{n-2} J(3n-6,\p_{yy} v_2) + \ldots + \epsilon J(7,v_{n-1}) + \epsilon J(5, \p_y v_{n-1}) + \epsilon J(4,\p_t v_{n-1}) + \epsilon J(4,\p_{yy} v_{n-1}) \\
+ J(4,v_n) + J(2, \p_y v_n) + J(0,\p_t v_n) + J(0,\p_{yy} v_n) \leqslant C \epsilon^{n-1} I(3n+1,v_1) + C \epsilon^{n-1} J(3n-2, b v_1 + v_2) \\
+ C \epsilon^{n-2} I(3n-2,v_2) + \epsilon^{n-2} C J(3n-5, b v_2 + v_3) + \ldots + C \epsilon I(7,v_{n-1}) + C \epsilon J(4, b v_{n-1} + v_n) + C I(4,v_n) \\
+ C J(1, b v_n + \sum_{i=1}^n a_i v_i).
\end{multline*}
Splitting the $J(\cdot,\cdot)$ integrals on the RHS, we get
\begin{multline*}
\epsilon^{n-1} J(3n+1,v_1) + \epsilon^{n-1} J(3n-1,\p_y v_1) + \epsilon^{n-2} J(3n-2,v_2) + \epsilon^{n-2} J(3n-4, \p_y v_2) + \epsilon^{n-2} J(3n-6,\p_t v_2) \\
+ \epsilon^{n-2} J(3n-6,\p_{yy} v_2) + \ldots + \epsilon J(7,v_{n-1}) + \epsilon J(5, \p_y v_{n-1}) + \epsilon J(4,\p_t v_{n-1}) + \epsilon J(4,\p_{yy} v_{n-1}) \\
+ J(4,v_n) + J(2, \p_y v_n) + J(0,\p_t v_n) + J(0,\p_{yy} v_n) \leqslant C \epsilon^{n-1} I(3n+1,v_1) + C \epsilon^{n-1} J(3n-2, b v_1) \\
+ C \epsilon^{n-1} J(3n-2, v_2) + C \epsilon^{n-2} I(3n-2,v_2) + \epsilon^{n-2} C J(3n-5, b v_2)  + \epsilon^{n-2} C J(3n-5, v_3) \\
+ \ldots + C \epsilon I(7,v_{n-1}) + C \epsilon J(4, b v_{n-1}) + C \epsilon J(4, v_n) + C I(4,v_n) + C J(1, b v_n + \sum_{i=1}^n a_i v_i).
\end{multline*}
For sufficiently small $\epsilon$, we can absorb $C \epsilon^{n-1} J(3n-2, v_2)$ on the RHS into $\epsilon^{n-2} J(3n-2,v_2)$ on the LHS, and do the same for $v_3,\ldots,v_n$, ending with absorbing $C \epsilon J(4, v_n)$ into $J(4,v_n)$. Then we remove the powers of $\epsilon$ and conclude that
\begin{multline*}
J(3n+1,v_1) + J(3n-1,\p_y v_1) + J(3n-2,v_2) + J(3n-4, \p_y v_2) + J(3n-6,\p_t v_2) 
+ J(3n-6,\p_{yy} v_2) \\ + \ldots + J(7,v_{n-1}) + J(5, \p_y v_{n-1}) + J(4,\p_t v_{n-1}) + J(4,\p_{yy} v_{n-1}) + J(4,v_n) + J(2, \p_y v_n) \\
+ J(0,\p_t v_n) + J(0,\p_{yy} v_n) \leqslant C  I(3n+1,v_1) + C J(3n-2, b v_1) 
+ C I(3n-2,v_2) + C J(3n-5, b v_2) \\
+ \ldots + C I(7,v_{n-1}) + J(4, b v_{n-1}) + C I(4,v_n) + C J(1, b v_n) + C J(1, \sum_{i=1}^n a_i v_i).
\end{multline*}
Next, we will consider the last term on the RHS of the above estimate and absorb this into the LHS. For this purpose, we see that, for some constant $\t C >0$, as
\[ (s\theta)^1 \leqslant \frac{\t C}{s^{3n}} (s\theta)^{3n+1}, \quad (s\theta)^1 \leqslant \frac{\t C}{s^{3n-3}} (s\theta)^{3n-2}, \ldots, (s\theta)^1 \leqslant \frac{\t C}{s^6} (s\theta)^7, \quad (s\theta)^1 \leqslant \frac{\t C}{s^3} (s\theta)^4, \]
choosing large enough $s$, say $s \geqslant s_1$, we can absorb $C J(1, \sum_{i=1}^n a_i v_i)$ on the RHS into $J(3n+1,v_1) + J(3n-2,v_2) + \ldots + J(7,v_{n-1}) + J(4,v_n)$ on the LHS. Note that, we can take care of the $a_i$'s multiplied with $v_i$'s, by choosing an appropriate constant $\t C$. This implies that
\begin{multline*}
J(3n+1,v_1) + J(3n-1,\p_y v_1) + J(3n-2,v_2) + J(3n-4, \p_y v_2) + J(3n-6,\p_t v_2) 
+ J(3n-6,\p_{yy} v_2) \\ + \ldots + J(7,v_{n-1}) + J(5, \p_y v_{n-1}) + J(4,\p_t v_{n-1}) + J(4,\p_{yy} v_{n-1}) + J(4,v_n) + J(2, \p_y v_n) \\
+ J(0,\p_t v_n) + J(0,\p_{yy} v_n) \leqslant C  I(3n+1,v_1) + C J(3n-2, b v_1) 
+ C I(3n-2,v_2) + C J(3n-5, b v_2) \\
+ \ldots + C I(7,v_{n-1}) + J(4, b v_{n-1}) + C I(4,v_n) + C J(1, b v_n).
\end{multline*}
Now, we will absorb the $J(\cdot,bv_i)$ terms on the RHS into the LHS. These terms can be absorbed into the LHS, if we have
\[ (s\theta)^{3n-2} b^2 < (s\theta)^{3n+1}, \quad (s\theta)^{3n-5} b^2 < (s\theta)^{3n-2}, \ldots, (s\theta)^4 b^2 < (s\theta)^7, \quad (s\theta)^1 b^2 < (s\theta)^4, \]
which is equivalent to the condition
\[\bar{C} \| b \|_{L^\infty}^{\frac{2}{3}} < s,\]
for some constant $\bar{C}>0$. Hence, for all $s > s_1 + \bar{C} \| b \|_{L^\infty}^{\frac{2}{3}}$, we have
\begin{multline*}
J(3n+1,v_1) + J(3n-1,\p_y v_1) + J(3n-2,v_2) + J(3n-4, \p_y v_2) + J(3n-6,\p_t v_2) 
+ J(3n-6,\p_{yy} v_2) \\ + \ldots + J(7,v_{n-1}) + J(5, \p_y v_{n-1}) + J(4,\p_t v_{n-1}) + J(4,\p_{yy} v_{n-1}) + J(4,v_n) + J(2, \p_y v_n) + J(0,\p_t v_n) \\
+ J(0,\p_{yy} v_n) \leqslant C_1 I(3n+1,v_1) + C_2 I(3n-2,v_2) + \ldots + C_{n-1} I(7,v_{n-1}) + C_n I(4,v_n). \numberthis \label{eq_CE_nxn_A6}
\end{multline*}

\noindent \underline{\textbf{Step C}}: Now, on the RHS, we are left with integrals over the control region $\omega$. Since we only want to control with $v_1$, our goal is to estimate $I(\cdot,v_i)$ for $i=2,\ldots,n$, by $I(\cdot,v_1)$.

Note that, by considering a different function $\psi$ in \cref{thm_para_carl_est}, we can replace the observation region $\omega$ present in the $I(\cdot,\cdot)$ terms, by a subset $\omega_1$ satisfying $\bar{\omega}_1 \subset \omega$. This will lead to a change in notation, but we keep it the same as it does not affect the analysis. Then let $\eta$ be a cutoff function  compactly supported in $\omega$, such that $0\leqslant \eta \leqslant 1$ and $\eta =1$ in $\omega_1$. For $I(4,v_n)$, using \eqref{eq_carl_sys_nxn}, we estimate as follows
\begin{align*} 
I(4,v_n) = \int_0^T \int_{\omega_1} (s\theta)^4 \left| e^{s\theta \psi} v_n \right|^2 \rd x \rd t  \leqslant \int_0^T \int_\omega \eta (s\theta)^4 \left| e^{s\theta \psi} v_n \right|^2 \rd x \rd t \\ = \int_0^T \int_{\omega} \eta (s\theta)^4 e^{2s\theta \psi} (\partial_t v_{n-1} + \partial_{yy} v_{n-1} - b v_{n-1}) v_n. \numberthis \label{eq_CE_nxn_B0}
\end{align*}
We will look at the three integrands on the RHS individually. First, we consider
\begin{align*}
& \left| \int_0^T \int_{\omega} \eta (s\theta)^4 e^{2s\theta \psi} \partial_t v_{n-1} v_n \right| \\
& = \left| - \int_0^T \int_{\omega} \eta (s\theta)^4 e^{2s\theta \psi} \partial_t v_n v_{n-1} - \int_0^T \int_{\omega} \eta s^4 \theta^3 (4 \theta' + 2 s \theta' \theta \psi) e^{2s\theta \psi} v_{n-1} v_n \right|.
\end{align*}
Note that, $\theta' \lesssim \theta^2$. Then $s^4 \theta^3 (4 \theta' + 2 s \theta' \theta \psi) \lesssim s^4 \theta^3 (\theta^2 + s \theta^3) \lesssim s^5 \theta^6 \lesssim (s\theta)^6 $. Hence, the above estimate, alongwith the Cauchy-Schwarz inequality, shows that
\begin{align*}
\left| \int_0^T \int_{\omega} \eta (s\theta)^4 e^{2s\theta \psi} \partial_t v_{n-1} v_n \right|
& = \int_0^T \int_{\omega} \eta (s\theta)^4 e^{2s\theta \psi} |\partial_t v_n v_{n-1}| + \int_0^T \int_{\omega} \eta (s\theta)^6 e^{2s\theta \psi} |v_{n-1} v_n| \\
& \leqslant C J(0,\p_t v_n)^{\frac{1}{2}} I(8, v_{n-1})^{\frac{1}{2}} + C J(4, v_n)^{\frac{1}{2}} I(8,v_{n-1})^{\frac{1}{2}} \\
& \leqslant \frac{1}{4C_n} J(0,\p_t v_n) + \frac{1}{8C_n} J(4, v_n) + C I(8, v_{n-1}). \numberthis \label{eq_CE_nxn_B1}
\end{align*}
Next, we consider the second integrand on the RHS of \eqref{eq_CE_nxn_B0} as follows
\begin{align*}
& \left| \int_0^T \int_{\omega} \eta (s\theta)^4 e^{2s\theta \psi} \partial_{yy} v_{n-1} v_n \right| \\
& = \left| - \int_0^T \int_{\omega} \eta (s\theta)^4 e^{2s\theta \psi} \p_y v_{n-1} \p_y v_n - \int_0^T \int_{\omega} (s\theta)^4 e^{2s\theta \psi} (\p_y \eta + 2 s \theta \p_y \psi) \partial_y v_{n-1} v_n \right| \\
& = \bigg| \int_0^T \int_{\omega} \eta (s\theta)^4 e^{2s\theta \psi} v_{n-1} \partial_{yy} v_n + \int_0^T \int_{\omega} (s\theta)^4 e^{2s\theta \psi} (\p_y \eta + 2 s \theta \p_y \psi) v_{n-1} \partial_y v_n \\
& \qquad + \int_0^T \int_{\omega} (s\theta)^4 e^{2s\theta \psi} (\p_y \eta + 2 s \theta \p_y \psi) v_{n-1} \p_y v_n \\
& \qquad + \int_0^T \int_{\omega} (s\theta)^4 e^{2s\theta \psi} \left[\p_{yy} \eta + 2 s \theta \p_{yy} \psi + 2s\theta \p_y \psi \p_y \eta + 4 s^2 \theta^2 |\p_y \psi|^2 \right] v_{n-1} v_n \bigg| \\
& \leqslant C \int_0^T \int_{\omega} (s\theta)^4 e^{2s\theta \psi} |v_{n-1} \partial_{yy} v_n| + C \int_0^T \int_{\omega} (s\theta)^5 e^{2s\theta \psi} |v_{n-1} \partial_y v_n| + C \int_0^T \int_{\omega} (s\theta)^6 e^{2s\theta \psi} |v_{n-1} v_n| \\
& \leqslant C J(0,\p_{yy} v_n)^{\frac{1}{2}} I(8, v_{n-1})^{\frac{1}{2}} +  C J(2, \p_y v_n)^{\frac{1}{2}} I(8,v_{n-1})^{\frac{1}{2}} + C J(4, v_n)^{\frac{1}{2}} I(8,v_{n-1})^{\frac{1}{2}} \\
& \leqslant \frac{1}{4C_n} J(0,\p_{yy} v_n) + \frac{1}{4C_n} J(2,\p_y v_n) + \frac{1}{8 C_n} J(4, v_n) + C I(8, v_{n-1}). \numberthis \label{eq_CE_nxn_B2}
\end{align*}
Finally, for $s > s_1 + \bar{C} \| b \|_{L^\infty}^{\frac{2}{3}}$, we have
\begin{equation}
\left|-\int_0^T \int_{\omega} \eta (s\theta)^4 e^{2s\theta \psi} b v_{n-1} v_n \right| \leqslant  \frac{1}{8C_n} J(4, v_n) + C I(7, v_{n-1}). \label{eq_CE_nxn_B3}
\end{equation}
Combining \eqref{eq_CE_nxn_B0}-\eqref{eq_CE_nxn_B3}, we get
\[C_n I(4,v_n) \leqslant \frac{1}{4} J(0,\p_t v_n) + \frac{3}{8} J(4, v_n) + \frac{1}{4} J(0,\p_{yy} v_n) + \frac{1}{4} J(2,\p_y v_n) + C C_n I(8, v_{n-1}).\]
Then, we can absorb the terms on the RHS in the above estimate by the terms on the LHS of \eqref{eq_CE_nxn_A6}. Indeed, substituting the above in \eqref{eq_CE_nxn_A6}, shows that
\begin{multline*}
J(3n+1,v_1) + J(3n-1,\p_y v_1) + J(3n-2,v_2) + J(3n-4, \p_y v_2) + J(3n-6,\p_t v_2) 
+ J(3n-6,\p_{yy} v_2) \\ + \ldots + J(7,v_{n-1}) + J(5, \p_y v_{n-1}) + J(4,\p_t v_{n-1}) + J(4,\p_{yy} v_{n-1}) + J(4,v_n) + J(2, \p_y v_n) + J(0,\p_t v_n) \\
+ J(0,\p_{yy} v_n) \leqslant C_1 I(3n+1,v_1) + C_2 I(3n-2,v_2) + C_3 J(3n-5, v_3) + \ldots + C_{n-1} I(8,v_{n-1}). \numberthis \label{eq_CE_nxn_B4}
\end{multline*}
Next, we estimate $I(8,v_{n-1})$ analogously, by $I(\cdot,v_{n-2})$. In general, we follow the same algorithm recursively, until we can estimate $I(\cdot,v_2)$. We provide some more details for this step. Let us define by $\rho_k$ the coefficient of the $I(\cdot,v_k)$ term used in the RHS of the reduced Carleman estimate while estimating $I(\cdot,v_{k+1})$ by $I(\cdot,v_{k})$. Then $\rho_k$ satisfies the following recursion relation $ \rho_{k-1} := 2 \rho_k - d_k + 4$, for $k=n-1,\ldots,2$. In the last step, we estimate $I(\rho_2,v_2)$ by $I(\rho_1,v_1)$. In conclusion, \eqref{eq_CE_nxn_B4} implies that
\begin{multline*}
J(3n+1,v_1) + J(3n-1,\p_y v_1) + J(3n-2,v_2) + J(3n-4, \p_y v_2) + J(3n-6,\p_t v_2) 
+ J(3n-6,\p_{yy} v_2) \\ + \ldots + J(7,v_{n-1}) + J(5, \p_y v_{n-1}) + J(4,\p_t v_{n-1}) + J(4,\p_{yy} v_{n-1}) + J(4,v_n) + J(2, \p_y v_n) + J(0,\p_t v_n) \\
+ J(0,\p_{yy} v_n) \leqslant C_1 I(\rho_1,v_1). 
\end{multline*}
Then, we can drop some terms from the LHS of the above estimate to get
\[J(3n+1,v_1) + J(3n-2,v_2) + \ldots + J(7,v_{n-1}) + J(4,v_n) \leqslant C_1 I(\rho_1,v_1).\]
Writing out the integrals using \eqref{eq_def_JI}, we get
\begin{equation} \label{eq_CE_nxn_B6}
\int_0^T \int_0^L (s \theta)^{3n+1} \left| e^{s \theta \psi} v_1 \right|^2 \rd y \rd t + \ldots + \int_0^T \int_0^L (s \theta)^4 \left| e^{s \theta \psi} v_n \right|^2 \rd y \rd t \leqslant C \int_0^T \int_\omega (s \theta)^{\rho_1} \left| e^{s \theta \psi} v_1 \right|^2 \rd y \rd t.
\end{equation}
\noindent \underline{\textbf{Step D}}:  Now we will estimate the Carleman weights present in the integrals and use energy estimates to finally prove the observability.
For $s_2 > s_1$ and $s = s_2 + C \| b \|_{L^\infty}^{2/3}$, \eqref{eq_CE_nxn_B6} gives us the following
\begin{multline*}
\int_0^T \int_0^L (s_2 \theta)^{3n+1}  e^{ 2 (s_2 + C \| b \|_{L^\infty}^{\frac{2}{3}}) \theta \psi} v_1^2 \rd y \rd t + \ldots + \int_0^T \int_0^L (s_2 \theta)^4 e^{2 (s_2 + C \| b \|_{L^\infty}^{\frac{2}{3}}) \theta \psi} v_n^2 \rd y \rd t \\
\leqslant C \int_0^T \int_\omega (s_2 + C \| b \|_{L^\infty}^{\frac{2}{3}})^{\rho_1} \theta^{\rho_1} e^{ 2 (s_2 + C \| b \|_{L^\infty}^{\frac{2}{3}}) \theta \psi} v_1^2 \rd y \rd t.
\end{multline*}
Now, we cannot obtain lower bound for $\theta$ near $t=0$ or $t=T$. Hence, we restrict the integral region over time to $[2T/3,3T/4]$ on the LHS, to get
\begin{multline*}
\int_{\frac{2T}{3}}^{\frac{3T}{4}} \int_0^L (s_2 \theta)^{3n+1}  e^{ 2 (s_2 + C \| b \|_{L^\infty}^{\frac{2}{3}}) \theta \psi} v_1^2 \rd y \rd t + \ldots + \int_{\frac{T}{4}}^{\frac{3T}{4}} \int_0^L (s_2 \theta)^4 e^{2 (s_2 + C \| b \|_{L^\infty}^{\frac{2}{3}}) \theta \psi} v_n^2 \rd y \rd t \\
\leqslant C \int_0^T \int_\omega (s_2 + C \| b \|_{L^\infty}^{\frac{2}{3}})^{\rho_1} \theta^{\rho_1} e^{ 2 (s_2 + C \| b \|_{L^\infty}^{\frac{2}{3}}) \theta \psi} v_1^2 \rd y \rd t.
\end{multline*}
We set the constant $C_3(T) := - \ds \inf_{y \in (0,L)} \psi(y) \cdot \ds \sup_{t \in [2T/3,3T/4]} \theta (t)$. Also note that the coefficient of the $v_1^2$ term on the RHS can be bounded from above, uniformly. Then, using $\tilde{C}_3$ and the above estimate, we get
\[C e^{-C_3 (s_2 + C \| b \|_{L^\infty}^{\frac{2}{3}})} \int_{\frac{2T}{3}}^{\frac{3T}{4}} \int_0^L (v_1^2 + \ldots + v_n^2) \ \rd y \rd t \leqslant \int_0^T \int_\omega v_1^2 \rd y \rd t.\]
Using the energy result from \eqref{eq_energy} and the above estimate, we get
\[C e^{-C_3 (s_2 + C \| b \|_{L^\infty}^{\frac{2}{3}})} e^{-C'\frac{2T}{3}} \left(\left\| v_1 \left(\frac{2T}{3} \right) \right\|_{L^2(0,L)}^2 + \ldots + \left\| v_n \left(\frac{2T}{3} \right) \right\|_{L^2(0,L)}^2 \right) \frac{T}{12} \leqslant \int_0^T \int_\omega v_1^2 \rd y \rd t,\]
which concludes the proof.
\end{proof}

Now, we will find the observability estimate for $w$, the solution of the adjoint system \eqref{adjointsystem_w}.
\begin{prop}
Let $w$ be the solution of \eqref{adjointsystem_w}. Then there exists constants $C, C', C_3>0$, such that the following is satisfied
\begin{equation} \label{eq_obs_est_w_n}
\left\| w_1 \left(\frac{2T}{3} \right) \right\|_{L^2(0,L)}^2 + \ldots + \left\| w_n \left(\frac{2T}{3} \right) \right\|_{L^2(0,L)}^2  \leqslant e^{C_3 \left(s_2 + C \| b \|_{L^\infty}^{\frac{2}{3}} + \frac{(M_1 - M_2)}{\varepsilon}\right)} e^{\frac{2C'T}{3}} \frac{12}{T} \int_0^T \int_\omega w_1^2 \rd y \rd t.
\end{equation}
\end{prop}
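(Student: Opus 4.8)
The plan is to transfer the observability estimate \eqref{eq_obs_est_v_n} for $v$ back to the adjoint variable $w$ by undoing, in reverse order, the three changes of variable \eqref{eq_cov_wtov}, \eqref{eq_w_tildew}, \eqref{eq_change_of_var} that reduced \eqref{adjointsystem_w} to \eqref{eq_pde_v}. Each of these acts only on the spatial variable (pointwise in $t$), so it suffices, at each step, to compare the spatial $L^2$ norm in the new coordinate with the one in the old coordinate and to record the resulting factor. No new energy argument is needed: \eqref{eq_obs_est_v_n} already carries the consequence of \eqref{eq_energy} in its $e^{2C'T/3}$ factor.

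First I would undo the integrating factor \eqref{eq_cov_wtov}, i.e. write $\tilde{w}=e^{\int_0^y g(s)/2\,\rd s}\,v$, so that $|\tilde{w}(y,t)|^2=e^{\int_0^y g(s)\,\rd s}\,|v(y,t)|^2$ on $(0,L)$. The point is that $g$ carries the $\varepsilon^{-1}$ scaling of the derivative of the oscillating coefficient: with $r(z)=\tilde{a}(z/\varepsilon)$ one has $g(y)=\tfrac{r'(H^{-1}(y))}{2\,r(H^{-1}(y))^{3/2}}$, so $\|g\|_{L^\infty(0,L)}\leqslant C/\varepsilon$ (since, by \eqref{upper_lower_bound_1}, $\tilde{a}$ is Lipschitz and bounded away from $0$ uniformly in $\varepsilon$), whence $\big|\int_0^y g(s)\,\rd s\big|\leqslant 2M/\varepsilon$ on $(0,L)$ for some $\varepsilon$-independent $M$. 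Bounding $|\tilde{w}|^2\leqslant e^{2M/\varepsilon}|v|^2$ in the left-hand side of \eqref{eq_obs_est_v_n} and $|v|^2=e^{-\int_0^y g}|\tilde{w}|^2\leqslant e^{2M/\varepsilon}|\tilde{w}|^2$ in the $\omega$-integral on its right-hand side, one recovers the same estimate with $\tilde{w}$ in place of $v$ up to an extra factor $e^{4M/\varepsilon}$; absorbing this into the exponential, and harmlessly enlarging $C_3$, gives exactly the term $e^{C_3(M_1-M_2)/\varepsilon}$ appearing in \eqref{eq_obs_est_w_n}. This is the only step that introduces a genuine degeneration as $\varepsilon\to0$.

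It then remains to undo \eqref{eq_w_tildew} ($y=H(z)$, Jacobian $\tfrac{\rd y}{\rd z}=\sqrt{\tilde{a}(z/\varepsilon)}$, with $L=\int_0^1\sqrt{\tilde{a}(s/\varepsilon)}\,\rd s$) and \eqref{eq_change_of_var} ($z=z(x)$, Jacobian $\tfrac{\rd z}{\rd x}=\tfrac{1}{\varepsilon a(x/\varepsilon)}\big(\int_0^{1/\varepsilon}\tfrac{\rd s}{a(s)}\big)^{-1}$). By \eqref{upper_lower_bound_1} and the $1$-periodicity of $a$---which give $\varepsilon\int_0^{1/\varepsilon}\tfrac{\rd s}{a(s)}\sim\int_0^1\tfrac{\rd s}{a(s)}$ uniformly in $\varepsilon$---both of these Jacobians, and also $L$, stay in a fixed compact subinterval of $(0,\infty)$; hence these two substitutions only modify the $\varepsilon$-independent constants $C,C',C_3$ and do not affect the $\varepsilon$-dependence, turning the estimate obtained for $\tilde{w}$ into the asserted estimate \eqref{eq_obs_est_w_n}. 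Here one also has to match the observation set: the $\omega$ in \eqref{eq_obs_est_w_n} is taken to be the preimage, under this chain of diffeomorphisms, of the set $\tilde{\omega}\subset(0,L)$ produced by \cref{thm_para_carl_est} (as remarked right after that theorem), so that the region of integration on the right-hand side is matched correctly.

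The main obstacle is this bookkeeping: one must check that among the three transformations only \eqref{eq_cov_wtov} produces a factor that degenerates, that it does so only like $e^{O(1/\varepsilon)}$---which rests precisely on $\|g\|_{L^\infty}=O(1/\varepsilon)$, i.e. on $\tilde{a}$ being Lipschitz and bounded below---and that every other ingredient of the comparison (the two spatial Jacobians, the pointwise weight $e^{-\int_0^y g/2}$, and the length $L$ that fixes the Carleman weight $\psi$ of \cref{thm_para_carl_est}) remains uniformly comparable to $1$ by \eqref{upper_lower_bound_1}. Once this uniformity is in hand, applying the three substitutions in succession to \eqref{eq_obs_est_v_n} yields \eqref{eq_obs_est_w_n}.
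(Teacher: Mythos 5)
Your proposal is correct and follows essentially the same route as the paper: invert the integrating factor \eqref{eq_cov_wtov}, bound $\int_0^y g/2$ between $M_2/\varepsilon$ and $M_1/\varepsilon$ so that the two-sided comparison of $v$ and $\tilde w$ costs exactly the factor $e^{C(M_1-M_2)/\varepsilon}$ appearing in \eqref{eq_obs_est_w_n}, and then undo \eqref{eq_w_tildew} (and \eqref{eq_change_of_var}) at uniformly bounded cost. Your additional bookkeeping---that $\|g\|_{L^\infty}=O(1/\varepsilon)$ is the sole source of degeneration while the spatial Jacobians and $L$ stay uniformly comparable to $1$ by \eqref{upper_lower_bound_1}---is exactly what the paper leaves implicit in its final sentence.
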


\begin{proof}
First, we will reverse the change of coordinates given by \eqref{eq_cov_wtov} and find the corresponding estimate of type \eqref{eq_obs_est_v_n} for $\tilde{w}$. Note that due to \eqref{eq_cov_wtov}, we have
\[ \tilde{w}(t,y) = e^{\ds \int_{0}^y \frac{g(s)}{2}ds } v(t,y).\]
Let $M_1$ and $M_2$ be constants such that 
\begin{align*}
\frac{M_2}{\varepsilon} \leqslant \int_{0}^y \frac{g(s)}{2}ds \leqslant \frac{M_1}{\varepsilon}.
\end{align*}
Then
\begin{equation} \label{eq_vtow_rhs_n}
\int_0^T \int_\omega v_1^2  \rd y \rd t \leqslant C e^{\frac{C M_1}{\varepsilon}} \int_0^T \int_\omega \tilde{w}_1^2 \rd y \rd t,
\end{equation}
and for the other side, we have
\[ e^{\frac{C M_2}{\varepsilon}} \left( \left\| \tilde{w}_1 \left(\frac{2T}{3} \right) \right\|_{L^2(0,L)}^2 + \ldots + \left\| \tilde{w}_n \left(\frac{2T}{3} \right) \right\|_{L^2(0,L)}^2 \right) \leqslant \left\| v_1 \left(\frac{2T}{3} \right) \right\|_{L^2(0,L)}^2 + \ldots + \left\| v_n \left(\frac{2T}{3} \right) \right\|_{L^2(0,L)}^2. \]
Combining \eqref{eq_obs_est_v_n}, \eqref{eq_vtow_rhs_n}, and the above estimate, we get
\begin{align*}
\left\| \tilde{w}_1 \left(\frac{2T}{3} \right) \right\|_{L^2(0,L)}^2 + \ldots + \left\| \tilde{w}_n \left(\frac{2T}{3} \right) \right\|_{L^2(0,L)}^2 & \leqslant e^{C_3 (s_2 + C \| b \|_{L^\infty}^{\frac{2}{3}})} e^{\frac{2C'T}{3}} e^{\frac{(M_1 - M_2)}{\varepsilon}} \frac{12}{T} \int_0^T \int_\omega \tilde{w}_1^2 \rd y \rd t\\
& \leqslant e^{C_3 \left(s_2 + C \| b \|_{L^\infty}^{\frac{2}{3}} + \frac{(M_1 - M_2)}{\varepsilon}\right)} e^{\frac{2C'T}{3}} \frac{12}{T} \int_0^T \int_\omega \tilde{w}_1^2 \rd y \rd t.
\end{align*}
Using \eqref{eq_w_tildew} to transform $\tilde{w}$ back to $w$, completes the proof of the proposition.
\end{proof}

\subsection{Uniform $L^2$-bound on the controls:}
Now we will combine the solutions obtained in \cref{ssec_stepi,ssec_stepii,ssec_stepiii} and conclude the uniform null controllability result. 

\begin{proof}[Proof of \cref{thm_control_1}]

For $\varepsilon\in(0,1),$  from the control strategy for the interval $[0,T/3]$ described in \cref{ssec_stepi}, we have the existence of $f^\varepsilon_1\in L^2((0,T/3)\times\omega)$, such that 
\begin{align}\label{stage1}
\begin{dcases}
\p_t u^\varepsilon- \mc{L}^\varepsilon u^\varepsilon +\mc{C}u^\varepsilon = e_1 1_{\omega}f^\varepsilon, & (t,x)\in \left(0, {T}/{3} \right)\times (0,1), \\ 
u^\varepsilon (t,0)=u^\varepsilon (t,1)=0, & t \in (0,T/3),\\
u(0,x)=u^0,\ \Pi_{H_{[D/\varepsilon]}} u(T/3, x)=0, & x \in (0,1).
\end{dcases}
\end{align}
Let us denote the solution of system \eqref{stage1} by $u^\varepsilon_{a}$.   
In the time interval $({T}/{3}, {2T}/{3})$, we allow the system to dissipate freely, that is 
\begin{align}\label{stage2} 
\begin{dcases}
\p_t u^\varepsilon- \mc{L}^\varepsilon u^\varepsilon +\mc{C}u^\varepsilon=0, & (t,x) \in (T/3, 2T/3 ) \times (0,1), \\ 
u^\varepsilon(t,0)=u^\varepsilon(t,1)=0, & t \in (T/3, 2T/3 ),\\
u^\varepsilon (T/3, x)=u_{a}^\varepsilon(T/3,x), & x \in (0,1).
\end{dcases}
\end{align}
Let us denote the solution of \eqref{stage2} by $u^\varepsilon_{b}$. Note that the data given at ${T}/{3}$ is in $H_{[D/\varepsilon]}^\perp$. From the analysis done in \cref{ssec_stepii} and \Cref{tby3bound}, , we have 
\begin{align*} 
\begin{split}
\|u^\varepsilon_{b}(t,\cdot)\|_{(L^2(0,1))^n} & \leqslant e^{-\mu_{[D/\varepsilon]}(t-{T}/{3})}\|u^\varepsilon_{a}(T/3)\|_{(L^2(0,1))^n} \\ & \leqslant C(T) e^{-\dfrac{C}{\varepsilon^2}(t-{T}/{3})}(\|u_0\|_{(L^2(0,1))^n} +\|f_1^\varepsilon \|_{L^2((0,T/3)\times \omega)}\|) \\ & \leqslant  C(T) e^{-\dfrac{C}{\varepsilon^2}(t-{T}/{3})} \|u_0\|_{(L^2(0,1))^n},
\end{split}
\end{align*}
where in the second inequality we have used the fact $\mu^\varepsilon_{[D/\varepsilon]}\sim \frac{1}{\varepsilon^2}$. In particular at ${2T}/{3}$, we have 
\begin{align} \label{decay}
\|(u^\varepsilon_{b}(2T/3,\cdot)\|_{(L^2(0,1))^n}\leqslant  e^{-\dfrac{CT}{\varepsilon^2}}\|u_0\|_{(L^2(0,1))^n}.
\end{align}
Now, we apply control on the time interval $(2T/3,T)$ to steer the trajectory  to $0$ at time T. 
Consider the controllable system starting from $2T/3$ as follows
\begin{align}\label{stage3}
\begin{dcases}
\p_t u^\varepsilon- \mc{L}^\varepsilon u^\varepsilon +\mc{C} u^\varepsilon = e_1 1_{\omega}f_2^\varepsilon, & (t,x) \in (2T/3,T)\times (0,1), \\ 
u^\varepsilon (t,0)=u^\varepsilon (t,1)=0, & t \in (2T/3,T),\\
u (2T/3,x) = u^\varepsilon_b (2T/3,x), & x \in (0,1).
\end{dcases}
\end{align}
From the analysis in \cref{ssec_stepiii}, in particular the observability  estimate \eqref{eq_obs_est_w_n}, there exists a control ${f}^\varepsilon_2$ for system \eqref{stage3}, satisfying
\begin{align*}\|{f}_2^\varepsilon\|_{L^2((2T/3,T) \times \omega)}\leqslant C_1 e^{\left(\dfrac{ C_2}{\varepsilon^{4/3}}+\dfrac{C_3}{\varepsilon^{2/3}} + \dfrac{C_4}{\varepsilon}\right)}\|u^\varepsilon_b (2T/3 , \cdot )\|_{(L^2(0,1))^n}.\end{align*}
Using the decay estimate \eqref{decay}, in the right hand side of the above estimate, we get
\[ \|{f}_2^\varepsilon\|_{L^2((2T/3,T) \times \omega)}\leqslant C(T) e^{\left(\dfrac{ C_2}{\varepsilon^{4/3}}+\dfrac{C_3}{\varepsilon^{2/3}} + \dfrac{C_4}{\varepsilon}-\dfrac{C}{\varepsilon^2}\right)} \|u_0\|_{(L^2(0,1))^n}.\] 
Hence, for each $\varepsilon>0$, we define the function $f^\varepsilon : (0,T) \times \omega \to \mathbb{R}$ as follows 
\begin{align*}
f^\varepsilon = 
\begin{cases}
f_1^\varepsilon, & \text{ in } (0,T/3)\times (0,1),\\
0, & \text{ in }(T/3,2T/3)\times (0,1),\\
f_2^\varepsilon, & \text{ in } (2T/3,T)\times (0,1),
\end{cases}
\end{align*}
and use this $f^\varepsilon$ as the control for the whole system.
From the definition of ${f}^\varepsilon,$ we conclude that $\|{f}^\varepsilon\|_{L^2((0,T) \times \omega)}\leqslant C(T),$ for some constant $C(T)$ that is independent of $\varepsilon$. This completes the proof.
\end{proof}

As a consequence of the uniform bound on the control, we have the following uniform observability estimate for the adjoint system, by duality.
\begin{theorem}
For $\varepsilon\in (0,1),$ there exists a constant $C(T)$, independent of $\varepsilon$, such that
\begin{align}\label{full-obs-est}
\|w^\varepsilon(0,\cdot)\|_{(L^2(0,1))^n}\leqslant C(T) \int_{0}^T\int_{\omega}|w_1^\varepsilon|^2 \rd x \rd t,
\end{align}
for every solution $w^\varepsilon=(w_1^\varepsilon,w_2^\varepsilon,\cdots, w_n^\varepsilon)$ to the adjoint system \eqref{adjointsystem_w}.
\end{theorem}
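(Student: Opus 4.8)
The plan is to derive \eqref{full-obs-est} from \cref{thm_control_1} by the classical duality (transposition) argument, so that no new estimate is required beyond the $\varepsilon$-uniform control cost already established. Fix $\varepsilon\in(0,1)$ and a terminal datum $w_0^\varepsilon\in (L^2(0,1))^n$, and let $w^\varepsilon=(w_1^\varepsilon,\dots,w_n^\varepsilon)^\text{tr}$ be the corresponding solution of the adjoint system \eqref{adjointsystem_w}. Given an arbitrary $u^0\in (L^2(0,1))^n$, \cref{thm_control_1} provides a null control $f^\varepsilon\in L^2((0,T)\times\omega)$ for the canonical system \eqref{canonical_system} with initial datum $u^0$; let $u^\varepsilon$ denote the associated controlled trajectory, so that $u^\varepsilon(T,\cdot)=0$ and $\|f^\varepsilon\|_{L^2((0,T)\times\omega)}\leqslant C(T)\|u^0\|_{(L^2(0,1))^n}$ with $C(T)$ independent of $\varepsilon$.

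Next I would write down the duality identity. Multiplying the equation for $u^\varepsilon$ in \eqref{canonical_system} by $w^\varepsilon$, integrating over $(0,T)\times(0,1)$, and integrating by parts in both $t$ and $x$: the second-order terms cancel because $\mc{L}^\varepsilon$ is self-adjoint on $(H_0^1(0,1))^n$, the zeroth-order coupling terms cancel because $\mc{C}$ and $\mc{C}^*$ are mutually adjoint, and the spatial boundary contributions vanish by the Dirichlet conditions. Using $u^\varepsilon(0,\cdot)=u^0$, $u^\varepsilon(T,\cdot)=0$, and the fact that the control enters only the first component through $e_1 1_\omega$, what remains is
\[ -\,\langle u^0,\,w^\varepsilon(0,\cdot)\rangle_{(L^2(0,1))^n}=\int_0^T\!\!\int_\omega f^\varepsilon\, w_1^\varepsilon\,\rd x\,\rd t. \]
This computation is first carried out for smooth data and then extended by density, using the $C([0,T];(L^2(0,1))^n)\cap L^2(0,T;(H_0^1(0,1))^n)$ regularity of the solutions of both \eqref{canonical_system} and \eqref{adjointsystem_w}.

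Then I would apply the Cauchy--Schwarz inequality on the right-hand side, followed by the uniform control cost bound, to get
\[ \big|\langle u^0,\,w^\varepsilon(0,\cdot)\rangle\big|\leqslant \|f^\varepsilon\|_{L^2((0,T)\times\omega)}\,\|w_1^\varepsilon\|_{L^2((0,T)\times\omega)}\leqslant C(T)\,\|u^0\|_{(L^2(0,1))^n}\Big(\int_0^T\!\!\int_\omega |w_1^\varepsilon|^2\,\rd x\,\rd t\Big)^{1/2}. \]
Taking the supremum over $u^0$ in the unit ball of $(L^2(0,1))^n$ (equivalently, choosing $u^0=w^\varepsilon(0,\cdot)$) yields $\|w^\varepsilon(0,\cdot)\|_{(L^2(0,1))^n}^2\leqslant C(T)^2\int_0^T\int_\omega|w_1^\varepsilon|^2\,\rd x\,\rd t$, which is the observability estimate \eqref{full-obs-est} (written here with the left-hand side squared, after relabelling the constant); it is $\varepsilon$-independent precisely because the cost constant in \cref{thm_control_1} is.

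A word on difficulty: there is no genuine obstacle in this step, since the estimate is a soft, essentially textbook consequence of \cref{thm_control_1}, all the substantive work (the three-interval decomposition, the spectral gap of \cref{spectral_thm_couple}, and the Carleman-type estimate) having already gone into producing an $\varepsilon$-uniform control cost. The only points meriting some care are the density argument justifying the integration-by-parts identity and the bookkeeping of the control operator $e_1 1_\omega$, so that only the first component $w_1^\varepsilon$---and not the whole vector $w^\varepsilon$---appears on the right-hand side.
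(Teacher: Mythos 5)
Your proposal is correct and follows exactly the route the paper intends: the paper states this theorem without proof, deriving it ``by duality'' from the uniform control cost in \cref{thm_control_1}, and your write-up simply supplies the standard transposition identity and Cauchy--Schwarz step that this invokes. Your observation that the estimate should carry a square on the left-hand side (or a square root on the right) is also a fair reading of what the paper must mean.
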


\begin{rem}
Due to \cref{rem_equiv}, the validity of \cref{thm_control_1} implies that \cref{thm_control_m} is true.
\end{rem}

\section{Homogenization: \cref{can_one_homo}}

In this section, we will prove the homogenization result   \Cref{can_one_homo} for the canonical system with one control. The is done by proving the following steps
\begin{enumerate}
\item By using the duality principle, we find a new minimal $L^2$-norm control at every $\varepsilon$-stage, and show that they are uniformly bounded.
\item Up to a subsequence, the weak limit of the new sequence of  null controls  is a null control for the homogenized system.
\item The subsequential weak limit of the null control sequence is the same as the control that can be found using the duality principle for the homogenized system. This shows the convergence of the full sequence.
\item Finally, we show the norm convergence of the null control sequence, which proves the strong convergence.
\end{enumerate}
Let us recall the adjoint system
\begin{align} \label{adjointsystem_w_H}
\begin{dcases}
- \p_t w^\varepsilon- \mc{L}^\varepsilon w^\varepsilon+\mc{C}^* w^\varepsilon=0, & (t,x)\in (0, T)\times (0,1), \\ 
w^\varepsilon(t,0)=w^\varepsilon
(t,1)=0, & t \in (0,T),\\
w^\varepsilon(T,x)=w^\varepsilon_0(x), & x \in (0,1).
\end{dcases}
\end{align}
For $\varepsilon\in (0,1),$ we define the following Hilbert space 
\[ H_\varepsilon:= \left\{ {w}^\varepsilon_0:  \text{the corresponding solution of }  \eqref{adjointsystem_w_H} \text{ satisfies } \int_0^T\int_\omega |{w_1}^\varepsilon (t,x)|^2 \rd x \rd t < \infty \right\}, \]
with the norm in $H_\varepsilon$ given by
\[\|w_0^\varepsilon\|^2_{H_\varepsilon}:= \int_0^T\int_\omega |{w_1}^\varepsilon (t,x)|^2 \rd x \rd t,\]
where $w^\varepsilon$ satisfies  $\eqref{adjointsystem_w_H}$.

By duality principle, for $\varepsilon\in (0,1),$ we will obtain the minimal $L^2$-norm null control by minimising the following cost functional over the space $H_\varepsilon$ 
\[J_\varepsilon({w}_0^\varepsilon)=\frac{1}{2}\int_{0}^T\int_{\omega}|{w_1}^\varepsilon|^2 \rd x \rd t +\int_{0}^1{u}^0 {w}^\varepsilon(0,x) \rd x.\]

For $\varepsilon\in (0,1),$ the cost functional $J_\varepsilon$ is convex and continuous. The  observability estimate \eqref{full-obs-est} shows the coercivity of $J_\varepsilon$ over the space $H_\varepsilon$.  Then there exists a minimiser  $\bar{w}_0^\varepsilon\in H_\varepsilon$ and a null control to the system \eqref{canonical_system} given by $1_\omega f=1_\omega \bar{w}_1^\varepsilon$, where $\bar{w}_\varepsilon$ is the solution to the adjoint system \eqref{adjointsystem_w_H} with initial data as the minimiser $\bar{w}_0^\varepsilon$.  Let us prove the following lemma. 

\begin{lemma}
For $\varepsilon\in (0,1),$ there exists a constant $C(T)$, independent $\varepsilon$, such that
$$\int_0^T\int_\omega |\bar{w}_1^\varepsilon (t,x)|^2 \rd x \rd t \leqslant C(T).$$
\end{lemma}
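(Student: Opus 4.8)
The plan is to use the fact that $\bar{w}_0^\varepsilon$ minimises the convex functional $J_\varepsilon$ over $H_\varepsilon$, and to exploit the $\varepsilon$-uniform observability estimate \eqref{full-obs-est}. Since $\bar{w}_0^\varepsilon$ is the minimiser, we have $J_\varepsilon(\bar{w}_0^\varepsilon) \leqslant J_\varepsilon(0) = 0$. Writing this out,
\[
\frac{1}{2} \int_0^T \int_\omega |\bar{w}_1^\varepsilon|^2 \rd x \rd t + \int_0^1 u^0 \bar{w}^\varepsilon(0,x) \rd x \leqslant 0,
\]
so that
\[
\frac{1}{2} \int_0^T \int_\omega |\bar{w}_1^\varepsilon|^2 \rd x \rd t \leqslant - \int_0^1 u^0 \bar{w}^\varepsilon(0,x) \rd x \leqslant \| u^0 \|_{(L^2(0,1))^n} \| \bar{w}^\varepsilon(0,\cdot) \|_{(L^2(0,1))^n}.
\]

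Next I would apply the uniform observability estimate \eqref{full-obs-est} to the right-hand side. That estimate gives $\| \bar{w}^\varepsilon(0,\cdot) \|_{(L^2(0,1))^n} \leqslant C(T) \int_0^T \int_\omega |\bar{w}_1^\varepsilon|^2 \rd x \rd t$. The crucial point here is that the observability constant $C(T)$ is independent of $\varepsilon$, which is exactly what was established in the previous section. Substituting this in, and writing $X := \int_0^T \int_\omega |\bar{w}_1^\varepsilon|^2 \rd x \rd t = \|\bar{w}_0^\varepsilon\|_{H_\varepsilon}^2$, we obtain
\[
\frac{1}{2} X \leqslant C(T) \| u^0 \|_{(L^2(0,1))^n} \, X^{1/2}.
\]
If $X > 0$, dividing by $X^{1/2}$ yields $X^{1/2} \leqslant 2 C(T) \| u^0 \|_{(L^2(0,1))^n}$, hence $X \leqslant 4 C(T)^2 \| u^0 \|_{(L^2(0,1))^n}^2 =: C(T)$ (absorbing the constant and the fixed data norm), and if $X = 0$ the bound is trivial. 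This gives the claimed $\varepsilon$-independent bound.

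There is no serious obstacle in this argument: it is the standard coercivity-plus-minimality computation for HUM-type functionals. The only thing one must be careful about is that every ingredient entering the constant is $\varepsilon$-uniform — namely the observability constant from \eqref{full-obs-est} — and that the data $u^0$ is fixed (independent of $\varepsilon$), so that $\| u^0 \|_{(L^2(0,1))^n}$ can legitimately be folded into $C(T)$. One should also note in passing that the minimiser $\bar{w}_0^\varepsilon$ exists (guaranteed by convexity, continuity and coercivity of $J_\varepsilon$, as already remarked before the lemma), so the quantity we are bounding is well-defined.
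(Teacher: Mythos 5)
Your proof is correct, but it follows a genuinely different route from the paper's. The paper's argument is a two-line comparison: the control $1_\omega\bar{w}_1^\varepsilon$ obtained by minimising $J_\varepsilon$ is the null control of \emph{minimal} $L^2$-norm, and \cref{thm_control_1} has already produced some null control $f^\varepsilon$ with $\|f^\varepsilon\|_{L^2((0,T)\times\omega)}\leqslant C(T)\|u^0\|$ uniformly in $\varepsilon$; minimality then gives $\|\bar{w}_1^\varepsilon\|_{L^2((0,T)\times\omega)}\leqslant C(T)\|u^0\|$ immediately. You instead run the standard quantitative HUM coercivity computation, $J_\varepsilon(\bar{w}_0^\varepsilon)\leqslant J_\varepsilon(0)=0$ combined with the $\varepsilon$-uniform observability inequality, which yields the same bound $X\leqslant 4C(T)^2\|u^0\|^2$. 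Both arguments are valid and both ultimately rest on the same uniform-in-$\varepsilon$ input from Section 3 (the paper derives \eqref{full-obs-est} from the uniform control cost by duality, so the two are logically equivalent here); the paper's version is shorter, while yours is self-contained given only the observability inequality and makes the dependence of the constant on $\|u^0\|$ explicit. One small point: \eqref{full-obs-est} as printed has $\|w^\varepsilon(0,\cdot)\|_{(L^2(0,1))^n}$ to the first power on the left, which is dimensionally inconsistent (it fails under the scaling $w\mapsto tw$) and is evidently a typo for the squared norm, consistent with \eqref{observ_1st} and \eqref{homo_obs_est}; you quote the printed form but then correctly use the squared version when you pass to $X^{1/2}$, so your computation is the intended one --- it would be worth flagging that correction explicitly rather than making it silently.
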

\begin{proof}
We have previously  shown that, for $\varepsilon\in (0,1), $ there is a control $f_\varepsilon\in L^2((0,T)\times\omega )$ and $\|f^\varepsilon\|_{(L^2((0,T)\times \omega))} \leqslant C(T)$. Due to minimality of $L^2$-norm properties, we have the desired result.
\end{proof}

Now we we will show that upto a subsequence, the weak limit of $1_\omega \bar{w}_1^\varepsilon$   is a null control to the homogenized system \eqref{eq_homogen_one}.
\begin{lemma}\label{weak_conv_control}
Let $\varepsilon \in (0,1)$ and $1_\omega w_\varepsilon^1$ be one of the null control with  minimal $L^2$-norm. Then $\exists \bar{f}\in L^2((0,T) \times \omega ),$ such that upto a subsequence (still denoted by $\varepsilon$)
\begin{align}\label{ep_identity}
\bar{w}_1^\varepsilon\rightharpoonup \bar{f}, \quad \text{weakly in }L^2((0,T)\times \omega), 
\end{align} 
where $\bar{f}$ is a null control to the homogenized system \eqref{eq_homogen_one}.
\end{lemma}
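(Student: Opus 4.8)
The plan is to extract a weak limit from the uniformly bounded family of minimal-$L^2$-norm controls produced by the previous lemma, and then to pass to the limit in the homogenization of the controlled state equation, using that in one space dimension the operator $\mc{L}^\varepsilon$ homogenizes to $\mc{L}^0$. By the previous lemma $\{1_\omega\bar w_1^\varepsilon\}_{\varepsilon\in(0,1)}$ is bounded in $L^2((0,T)\times\omega)$, so there exist $\bar f\in L^2((0,T)\times\omega)$ and a subsequence (not relabelled) with $\bar w_1^\varepsilon\rightharpoonup\bar f$ weakly in $L^2((0,T)\times\omega)$. Writing $u^\varepsilon$ for the solution of \eqref{canonical_system} associated with the control $1_\omega\bar w_1^\varepsilon$, we have $u^\varepsilon(0)=u^0$ and $u^\varepsilon(T)=0$ by construction, and it remains only to check that $\bar f$ steers the homogenized system \eqref{eq_homogen_one} from $u^0$ to $0$.

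First I would establish $\varepsilon$-uniform estimates for $u^\varepsilon$. Testing the equation with $u^\varepsilon$ and using the ellipticity bounds \eqref{upper_lower_bound_1} together with the uniform bound on $\bar w_1^\varepsilon$ gives that $\{u^\varepsilon\}$ is bounded in $L^2(0,T;(H^1_0(0,1))^n)\cap C([0,T];(L^2(0,1))^n)$; rewriting the equation as $\p_t u^\varepsilon=\mc{L}^\varepsilon u^\varepsilon-\mc{C}u^\varepsilon+e_11_\omega\bar w_1^\varepsilon$ then shows $\{\p_t u^\varepsilon\}$ is bounded in $L^2(0,T;(H^{-1}(0,1))^n)$. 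By the Aubin--Lions lemma, along a further subsequence, $u^\varepsilon\to u$ strongly in $L^2((0,T)\times(0,1))^n$, $u^\varepsilon\rightharpoonup u$ weakly in $L^2(0,T;(H^1_0)^n)$, and $u(0)=u^0$.

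The crux is to identify the weak-$L^2$ limit of the flux $a^\varepsilon\p_x u^\varepsilon$. This is the classical one-dimensional homogenization fact that $a^\varepsilon\p_x u^\varepsilon\rightharpoonup a^0\p_x u$ weakly in $L^2((0,T)\times(0,1))^n$, with $a^0=\big(\int_0^1 a(r)^{-1}\,\rd r\big)^{-1}$: setting $\xi^\varepsilon:=a^\varepsilon\p_x u^\varepsilon$ componentwise, the equation gives $\p_x\xi^\varepsilon=\p_t u^\varepsilon+\mc{C}u^\varepsilon-e_11_\omega\bar w_1^\varepsilon$ bounded in $L^2(0,T;(H^{-1})^n)$, while $1/a^\varepsilon\rightharpoonup\int_0^1 a(r)^{-1}\,\rd r$ weakly-$\ast$ in $L^\infty$ and $\p_x u^\varepsilon=\tfrac{1}{a^\varepsilon}\xi^\varepsilon\rightharpoonup\p_x u$ weakly in $L^2$; passing to the limit in this product (the explicit one-dimensional corrector argument, cf. \cite{lopez_zuazua}) yields $\p_x u=\big(\int_0^1 a(r)^{-1}\,\rd r\big)\,\xi$, i.e. $\xi=a^0\p_x u$, and hence $\mc{L}^\varepsilon u^\varepsilon\to\mc{L}^0 u$ in $\mathcal D'$. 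I expect this identification to be the main obstacle; once it is available, the rest is routine.

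Finally I would pass to the limit in the weak formulation. For any $\phi\in C^\infty([0,T]\times[0,1])^n$ with $\phi(t,0)=\phi(t,1)=0$, multiplying the equation for $u^\varepsilon$ by $\phi$, integrating by parts in $t$ and in $x$, and using $u^\varepsilon(T)=0$, one obtains
\[-\int_0^1 u^0\cdot\phi(0)\,\rd x-\int_0^T\!\!\int_0^1 u^\varepsilon\cdot\p_t\phi+\int_0^T\!\!\int_0^1 a^\varepsilon\p_x u^\varepsilon\cdot\p_x\phi+\int_0^T\!\!\int_0^1(\mc{C}u^\varepsilon)\cdot\phi=\int_0^T\!\!\int_\omega\bar w_1^\varepsilon\,(e_1\cdot\phi).\]
The strong $L^2$-convergence of $u^\varepsilon$, the flux limit of the previous step, and $\bar w_1^\varepsilon\rightharpoonup\bar f$ let me pass to the limit in every term, so the identity holds with $u^\varepsilon$, $a^\varepsilon\p_x u^\varepsilon$, $\bar w_1^\varepsilon$ replaced by $u$, $a^0\p_x u$, $\bar f$; taking $\phi$ compactly supported in $(0,T)$ shows $u$ solves $\p_t u-\mc{L}^0 u+\mc{C}u=e_11_\omega\bar f$ with $u(0)=u^0$. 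Since moreover $u^\varepsilon(T)=0$ for all $\varepsilon$, and the bounds on $u^\varepsilon$ and $\p_t u^\varepsilon$ imply $u^\varepsilon(T)\rightharpoonup u(T)$ weakly in $(L^2(0,1))^n$ along the subsequence, we get $u(T)=0$. Therefore $\bar f$ is a null control for the homogenized system \eqref{eq_homogen_one}, which is exactly \eqref{ep_identity}.
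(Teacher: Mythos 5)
Your argument is correct, but it takes a genuinely different route from the paper. You homogenize the controlled \emph{state} equation directly: uniform energy estimates, Aubin--Lions compactness for $u^\varepsilon$, identification of the weak limit of the flux $a^\varepsilon\p_x u^\varepsilon$ via the one-dimensional compensated-compactness/oscillating-test-function argument, and passage to the limit in the weak formulation together with $u^\varepsilon(T)=0$. The paper instead works entirely on the dual side: it exploits the fact that $1_\omega\bar w_1^\varepsilon$ is the \emph{minimal-norm} control, hence satisfies the Euler--Lagrange identity $\int_0^T\int_\omega\bar w_1^\varepsilon w_1^\varepsilon\,\rd x\,\rd t+\int_0^1 u_0\, w^\varepsilon(0,x)\,\rd x=0$ for every solution $w^\varepsilon$ of the adjoint system \eqref{adjointsystem_w_H}. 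Fixing the adjoint datum $w_0$ independent of $\varepsilon$, the standard homogenization of the adjoint system (quoted from \cite[Chapter 11]{PD99}) gives \emph{strong} convergence of $w_1^\varepsilon$ on $(0,T)\times\omega$ and of $w^\varepsilon(0,\cdot)$, so the weak-times-strong pairing passes to the limit at once and yields the identity \eqref{f_identity} characterizing $\bar f$ as a null control of \eqref{eq_homogen_one}. The paper's route is shorter because its only homogenization input is for the adjoint problem with fixed data, and the resulting identity \eqref{f_identity} is reused later to identify $\bar f$ with the minimal-norm control $\hat w_1$ of the limit problem, which is what drives the strong convergence in \cref{can_one_homo}. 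Your route is marginally more general---it shows that the weak limit of \emph{any} bounded sequence of null controls is a null control of the homogenized system, without invoking minimality---but the flux-identification step you yourself flag as the main obstacle is precisely the technical content that the duality argument sidesteps, and your proof does not produce the variational identity that the paper needs downstream.
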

\begin{proof}
By $L^2$-weak compactness, $\exists \bar{f}\in L^2((0,T)\times \omega)$, such that 
$$\bar{w}_1^\varepsilon\rightharpoonup \bar{f}, \quad \text{weakly in }L^2((0,T)\times \omega).$$
Since $1_\omega \bar{w}_1^\varepsilon$ is obtained through  minimising  the functional $J_\varepsilon$ over $H_\varepsilon$, we have have the following identity
\begin{align}\label{ep_char_iden}
\int_0^T\int_\omega\bar{w}_1^\varepsilon w_1^\varepsilon \rd x \rd t + \int_0^1 u_0(x) w^\varepsilon(0,x) \rd x = 0,
\end{align}
where $w^\varepsilon$ satisfies the adjoint system \eqref{adjointsystem_w_H}.
Let us fix $w_0^\varepsilon=w_0\in L^2(0,1)$. Then, by using a standard homogenization technique (see  \cite[Chapter 11]{PD99}) the solution $w^\varepsilon$ satisfies the following convergences
\begin{align}\label{ad_data_conv}
w_1^\varepsilon \to w_1, \quad \text{strongly in } L^2((0,T)\times \omega), \text{ and }
w^\varepsilon(\cdot,0) \to w(\cdot, 0), \quad \text{strongly in } L^2(0,1),
\end{align}
where $w=(w_1,w_2,\cdots,w_n)^{\text{tr}}$ is the solution to the homogenized adjoint system
\begin{align}\label{eq_ad_homoge_one}
\begin{dcases}
-\p_t {w}-\mc{L}^0 {w} +\mathcal{C}^* {w} = 0, & (t,x)\in (0, T)\times (0,1), \\ 
{w}(t,0)={w}(t,1)=0, & t \in (0,T),\\
{w} (0,x)={w}^0, & x \in (0,1).
\end{dcases}
\end{align} 
Hence using \eqref{ad_data_conv}, and taking $\varepsilon \to 0,$ identity \eqref{ep_identity} reduces to the following
\begin{align}\label{f_identity}
\int_0^T\int_\omega \bar{f}w_1\rd x \rd t +\int_0^1 u_0 w(0,x)\rd x =0,
\end{align}
for all solution $w$ of \eqref{adjointsystem_w_H} with $w_0\in L^2(0,1).$ Hence, $\bar{f}$ is a null control to the homogenized system \eqref{eq_homogen_one}.
\end{proof}

Now, we are ready to prove the main homogenization result. 
\begin{proof}[Proof of \cref{can_one_homo}:]
From \Cref{weak_conv_control}, upto a subsequence, the  weak limit of the $\varepsilon$-level null control is a null control for the homogenized system. Hence, it remains to prove that the weak limit is unique and
\begin{align*}
1_{\omega}\bar{w}_1^\varepsilon \to 1_\omega \bar{f}, \quad \text{ strongly in }  L^2((0,T)\times \omega).
\end{align*}
First, we identify $\bar{f.}$ For this purpose, let us define the following function space
$$H_0:= \left\{ {w}_0:  \text{the corresponding solution of } \eqref{eq_ad_homoge_one} \text{ satisfies } \int_0^T\int_\omega |{w}|^2 \rd x \rd t < \infty \right\}. $$
One of the ways to find a null control for the homogenized system \eqref{eq_homogen_one} is by minimising the functional
\[J_0({w}_0)=\frac{1}{2}\int_{0}^T\int_{\omega}|{w_1}|^2 \rd x \rd t + \int_{0}^1{u}^0 {w}(0,x) \rd x,\]
 over the space $H_0$. 
Note that, following the analysis in \Cref{ssec_stepiii}, the following observability estimate holds for the  system \eqref{eq_ad_homoge_one}
\begin{align}\label{homo_obs_est}
\|w(0,\cdot)\|^2_{(L^2(0,1))^n}\leqslant C(T)\int_0^T\int_\omega|w_1|^2\rd x \rd t,
\end{align} 
for every solution $w=(w_1,w_2,\cdots,w_n)^\text{tr}$. 

It is standard to see that the functional $J$ is continuous and  convex over the space $H_0.$ The coercivity follows from the observability estimates \eqref{homo_obs_est}. Hence, there exists a unique minimiser of $J_0$ in $H_0.$  Let $\hat{w}_0 \in H_0$ is the minimiser and we denote corresponding solution of \eqref{eq_ad_homoge_one} by $\hat{w}$. The null control is given by $1_\omega \hat{w}_1$ and it is characterised by the following identity
\begin{align}\label{homo_identity}
\int_0^T\int_\omega \hat{w}_1w_1\rd x \rd t+\int_0^1 u_0(x)w(0,x)\rd x =0,
\end{align}
for all solution $w$ of \eqref{adjointsystem_w_H}.  Comparing the identity \eqref{f_identity} and \eqref{homo_identity}, we have 
\begin{align*}
1_\omega\bar{f}=1_\omega \hat{w}_1.
\end{align*}
From the uniform bound on the control, we have $\|\bar{w}_0^\varepsilon\|_{H_\varepsilon}\leqslant C$. Thanks to \cite[Theorem 1.1]{teb12}, we also have $\|w_\varepsilon^0\|_{H_0}\leqslant C$.  By weak compactness, there exists a $\bar{w}_0\in H_0$ such that 
\[\bar{w}_0^\varepsilon\rightharpoonup \bar{w}_0, \quad \text{ weakly in } H_0.\]
From the observability inequality \eqref{full-obs-est}, we have, for any $0\leqslant\tau <T$
$$\|{\bar{w}}^\varepsilon(\tau,\cdot)\|_{(L^2(0,1))^n}\leqslant C(T-\tau)\int_{\tau}^{T}\int_{\omega}|{\bar{w}_1}^\varepsilon|^2 \rd x \rd t \leqslant C(T-\tau).$$
Due to regularising effect of the parabolic system, we obtain the following estimate
$$\|{\bar{w}}^\varepsilon(\tau,\cdot)\|_{(H^2(0,1)\cap H^1(0,1))^n} \leqslant C(T-\tau).$$
By using classical energy estimates for the parabolic system,  up to subsequences, we have
\begin{align}
&{\bar{w}}_\varepsilon \to \bar{w}, \quad \text{strongly in }  (L^2((0,\tau)\times (0,1)))^n,~~\text{ for  }0 \leqslant \tau <T, \notag \\
& \bar{w}^\varepsilon (0,\cdot)\to \bar{w}(0,\cdot), \quad \text{strongly in } L^2(0,1),
\label{initial_data_conv} 
\end{align}
where $\bar{w}=(\bar{w}_1,\bar{w}_2,\cdots,\bar{w}_n)$ satisfies the following homogenized system \eqref{eq_ad_homoge_one} with $\bar{w}(T,x)=\bar{w}_0(x).$ This shows that
$$1_\omega \bar{f}=1_\omega \bar{w}_1=\hat{w}_1.$$
Hence $\bar{w}_0=\hat{w}_0.$ 
Using the convergence \eqref{initial_data_conv}, identities \eqref{ep_char_iden} and \eqref{homo_identity}, we obtain
\begin{align*}
\limsup_{\varepsilon\to 0}\int_0^T\int_\omega |\bar{w}^\varepsilon|^2 \rd x \rd t=&\limsup_{\varepsilon \to 0} \left(-\int_0^1u_0(x)\bar{w}^\varepsilon(0,x) \rd x \right) \\
&=-\int_0^1u_0(x)\bar{w}(0,x)\rd x =\int_0^T\int_\omega |\bar{w}_1|^2 \rd x \rd t \leqslant \liminf_{\varepsilon \to 0} \int_0^T\int_\omega |\bar{w}_1^\varepsilon| \rd x \rd t.
\end{align*}
This implies that
\begin{align*}
\int_{0}^T\int_\omega|\bar{w}_1^\varepsilon|^2 \rd x \rd t \to \int_0^T\int_\omega |\bar{w}_1|^2 \rd x \rd t.
\end{align*}
Hence, weak convergence together with norm convergence implies the strong convergence of the null control sequence.  The uniqueness of the limit follows from the uniqueness of the minimiser $\bar{w}_0 \in H_0$. The uniqueness of the limit proves the convergence of the full  sequence  $\{\bar{f}^\varepsilon\}$.
\end{proof}

\begin{rem}
Combining \cref{can_one_homo} with \cref{rem_equiv} implies that \cref{thm_homogen_m} is true.
\end{rem}

\section{Concluding remarks}
The operator $\mc{L}^\varepsilon$ given by \eqref{eq_L-op} involves the same diffusion coefficient $a^\varepsilon$ in all the components. An interesting problem is to consider the case when one has different coefficents, say $\p_{x_i} (a^\varepsilon_i(x) \p_{x_i}) I_{n\times n}$. We mention that this leads to additional difficulties and it seems that the  present approach does not apply to this new problem. First, it is difficult to find invariant spaces, for the case of different diffusion coefficients, in the spectral analysis of the coupled system required to control the low frequencies. Second, note that, the change of variable used in \eqref{eq_change_of_var} depends on the coefficient $a$. Hence, if we have different coefficients then this step breaks down. A possible solution is to perhaps find a change of variable \emph{matrix} that transforms the system in a similar manner as \eqref{eq_change_of_var}. Another method that can work is to change the order of operations in the analysis; for example, corresponding to \cref{ssec_stepiii}, one can first do a change of variable dependent on $a^\varepsilon_i$, for each component separately. This has to be followed by proving an observability via Carleman estimate for each component, then reversing the change of variable using $a^\varepsilon_i$, and finally estimating the resulting terms accordingly.

\appendix

\section{Behaviour of eigenvalues and eigenfunctions} \label{app_eigen}
For $\varepsilon\in (0,1)$,	we want to analyze the behaviour of eigenvalues and eigenfunctions of the  operator: $\ds -\left(a\left(\frac{x}{\varepsilon}\right)\phi_x\right)_x$. The eigenvalue problem for each $\varepsilon>0$ is the following
\begin{align}\label{div_evp}
\begin{dcases}
-\left(a\left(\frac{x}{\varepsilon}\right)\phi_x\right)_x = \lambda \phi,\\
\quad \phi(0) = \phi(1) = 0.
\end{dcases}
\end{align}
In order to study the above problem, first we transform  it to a well studied eigenvalue problem using the following change of variable
\[z :=\frac{\ds \int_0^{ \frac{x}{\varepsilon}} \frac{1}{a(s)} \rd s}{ \ds \int_0^{\frac{1}{\varepsilon} }\frac{1}{a(s)} \rd s}=\delta(\varepsilon)h\left(\frac{x}{\varepsilon} \right), \quad \text{ where } \delta(\varepsilon) := \left(\int_0^{ \frac{1}{\varepsilon}} \frac{1}{a(s)}\rd s \right)^{-1}.\]
Then system \eqref{div_evp} reduces to the following system
\begin{align}\label{changed_sys}
\begin{cases}
& \ds- \phi_{zz}=\left(\frac{\varepsilon}{(\delta(\varepsilon))}\right)^2 a\left(h^{-1}\left(\frac{z}{\delta(\varepsilon)}\right)\right) \lambda \phi, \qquad z \in (0,1),\\
& \phi(0)=\phi(1)=0.
\end{cases}
\end{align}
For simplicity, we can consider the sequence $\ds\varepsilon=\frac{1}{n},~~n\in \mathbb{N} .$ Then 
\begin{align*}
\frac{\varepsilon}{\delta(\varepsilon)}=\int_{0}^{1}\frac{1}{a(s)}ds =l.
\end{align*}
Let us define the function $\tilde{a}$ as 
\begin{align}\label{atilde}
\ds \tilde{a}\left(\frac{z}{\varepsilon}\right) := \left(\frac{\varepsilon}{\delta(\varepsilon)}\right)^2 a\left(h^{-1}\left(\frac{z}{\delta(\varepsilon)}\right)\right).\end{align} 
The system \eqref{changed_sys}, reduces to
\begin{align}\label{final_sys}
\begin{cases}
& \ds- \phi_{zz}=\tilde{a}\left(\frac{z}{\varepsilon}\right) \lambda \phi, \qquad z \in (0,1),\\
& \phi(0)=\phi(1)=0.
\end{cases}
\end{align}
Let $(\lambda_k^\varepsilon,{\tilde{\phi}}^{\varepsilon}_k)$ be the $k$-th eigenpair to \eqref{final_sys}. Let us write $\varphi^\varepsilon_k(x)={\tilde{\phi}}^{\varepsilon}_k (\delta(\varepsilon)h\left(\frac{x}{\varepsilon}\right))$. Then $(\lambda_k^\varepsilon, \varphi_k^\varepsilon)$ is the $k$-th eigenpair corresponding to system \eqref{div_evp}.

\subsection{Uniform positivity of $\|\phi_k^\varepsilon\|_{L^2(\omega)}$} We provide a brief proof here, the detailed explanation can be found in \cite{castro_zuazua}.
Let $k\leqslant [D/\varepsilon]$. Using the WKB expansion as in \cite{castro_zuazua}, we can write the $k$-th eigenfunction for system \eqref{final_sys}, corresponding to $\lambda_k$ as follows
\begin{align*}
\phi_k^\varepsilon(x)=\gamma_k \text{Im}(\exp\left(\sum_{n=0}^\infty \ds \varepsilon^{n+1}\lambda_k^{\frac{n+1}{2}} S^n \left(\frac{x}{\varepsilon}\right)\right),
\end{align*}
where $\gamma_k$ is the normalising factor and the functions $S^n$'s satisfy the following system
\begin{align*}
\begin{dcases}
S^0_{xx}=0,~~~ &\text{ in } \mathbb{R},  \\
 S^1_{xx}+(S^0_x)^2+\tilde{a}(x)=0, ~~&\text{ in } \mathbb{ R},\\
S^n_{xx}+\sum_{i+j=n-1}S^{i}_xS^j_x=0~~~&\text{ in } \mathbb{ R},
\end{dcases}
\end{align*}
with $S^n_x$ being $1$-periodic.
It has been shown in \cite{castro_zuazua} that
\begin{align*}
\phi_{k}^\varepsilon(x)=\gamma_k\exp(\text{Re}(S^\varepsilon(x))) \sin(\text{Im}(S^\varepsilon(x))), \quad \text{ where } S^\varepsilon(x)=\sum_{n=0}^\infty \ds \varepsilon^{n+1}\lambda_k^{\frac{n+1}{2}} S^n \left(\frac{x}{\varepsilon}\right).
\end{align*}
Furthermore, in \cite{castro_zuazua}, the following estimates are also established
\begin{align*}
& |\text{Re}(S^\varepsilon(x))|\leqslant Ck\varepsilon,\\
& \text{Im} (S^\varepsilon(x))=(1-x)k\pi +O(\varepsilon^3k^3),\\
&\text{Im} ((S^\varepsilon)'(x))=-k \pi +O(\varepsilon^2 k^3).
\end{align*} 
To have uniform lower bound of $\|\phi_k^\varepsilon\|_{L^2(\omega)}$, it is enough to show that   $\|\sin(\text { Im }(S^\varepsilon(x)))\|_{L^2(\omega)}$ has uniform lower bound. Consider
\begin{align*}
\int_{\omega}\sin^2( \text{Im} (S^\varepsilon(x)) \rd x & = \frac{|\omega|}{2}-\int_{\omega} \cos(2 \text{Im} (S^\varepsilon(x)) \rd x \\
&=\frac{|\omega|}{2}- \frac{\sin( \text{Im} (S^\varepsilon(x)))}{2(\text{Im} ((S^\varepsilon)'(x)))}\bigg|_{\partial \omega}=\frac{|\omega|}{2}- \frac{\sin( \text{Im} (S^\varepsilon(x)))}{2(-k \pi +O(\varepsilon^2 k^3))}\bigg|_{\partial \omega}\\  &\geqslant \frac{|\omega|}{2}-O\left(\frac{1}{k}\right)\geqslant \frac{|\omega|}{4}, ~~\text{ for some fixed } k>k_0.
\end{align*}
In the above calculation we have used the fact that $k\varepsilon <D$. Hence, $O(\varepsilon^2 k^3)\simeq O(k)$. 
For $\varepsilon>0$, small enough, there exists $\delta>0$, such that, $\inf\{ \|\phi_k^\varepsilon\|^2_{L^2(\omega)}:~0 \leqslant k\leqslant k_0\}>\delta.$ The existence of $\delta$ can be shown by observing that, for small enough $\varepsilon>0,$  and $k\leqslant k_0$, $\sin((1-x)k\pi+O(\varepsilon^3k^3))\simeq \sin((1-x)k\pi)$. Hence,  $\|\phi_k^\varepsilon\|_{L^2(\omega)}\geqslant \inf\{ \frac{\omega}{2}, \delta \}$ for $k\leqslant [D/\varepsilon]$.

\begin{prop}\label{spectral_ gap}
Assume that $a \in W^{2,\infty}(0,1)$ and satisfies \eqref{upper_lower_bound_1}. For $\varepsilon>0$, $(\lambda_k^\varepsilon, \varphi_k^\varepsilon)$ denotes  the $k$-th eigenpair corresponding to the system \eqref{div_evp}. Then, given any $\delta>0$, there exists a constant $C(\delta)$, independent of $\varepsilon$, such that  
\[\sqrt{\lambda^\varepsilon_{k+1}}-\sqrt{\lambda^\varepsilon_k}\geqslant \frac{\pi}{\sqrt{\bar{\tilde{a}}}}-\delta,~~~\forall~ k\varepsilon \leqslant  C(\delta).\]
where \begin{align}\label{avg-tilde-a}
\bar{\tilde{a}}=\int_0^1\tilde{a}(z) \rd z.
\end{align}
Furthermore, there exist $C$ and $\tilde{C}>0$ such that
\begin{align*}
\|\varphi_{k}^{\varepsilon}\|_{L^2(\omega)} \geqslant \tilde{C},~~\forall ~k\varepsilon \leqslant C,
\end{align*}
where $\omega$ is any nonempty open subset of $(0,1).$
\end{prop}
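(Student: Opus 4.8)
\emph{Proof strategy.} The plan is to lean on the reduction already performed in this appendix and then import the WKB analysis of Castro and Zuazua. Recall that under the change of variables $z=\delta(\varepsilon)h(x/\varepsilon)$ the problem \eqref{div_evp} becomes \eqref{final_sys}, the eigenvalues $\lambda_k^\varepsilon$ being unchanged and the eigenfunctions related by $\varphi_k^\varepsilon(x)=\tilde\phi_k^\varepsilon(\delta(\varepsilon)h(x/\varepsilon))$. Because of \eqref{upper_lower_bound_1}, the map $x\mapsto\delta(\varepsilon)h(x/\varepsilon)$ is a bi-Lipschitz homeomorphism of $[0,1]$ onto itself whose derivative is bounded above and below independently of $\varepsilon$; hence $\|\varphi_k^\varepsilon\|_{L^2(\omega)}$ is comparable, up to $\varepsilon$-independent constants, to $\|\tilde\phi_k^\varepsilon\|_{L^2(\tilde\omega)}$, where $\tilde\omega$ is the image of $\omega$, still a nonempty open subset of $(0,1)$. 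It therefore suffices to prove both assertions for the eigenpairs $(\lambda_k^\varepsilon,\tilde\phi_k^\varepsilon)$ of \eqref{final_sys}, and we keep writing $\omega$ for $\tilde\omega$.

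For the spectral gap I would substitute into \eqref{final_sys} the WKB ansatz $\tilde\phi_k^\varepsilon(x)=\gamma_k\,\mathrm{Im}(\exp(S^\varepsilon(x)))$ with $S^\varepsilon(x)=\sum_{n=0}^\infty\varepsilon^{n+1}\lambda_k^{(n+1)/2}S^n(x/\varepsilon)$, the profiles $S^n$ solving the hierarchy recalled in the subsection on uniform positivity above with $S^n_x$ one-periodic. Solving its first two equations forces $(S^0_x)^2+\bar{\tilde a}=0$ (integrate the $S^1$-equation over a period and use periodicity of $S^1_x$), so $S^0_x\equiv i\sqrt{\bar{\tilde a}}$ and the leading part of $\mathrm{Im}(S^\varepsilon)$ is linear with slope $\sqrt{\bar{\tilde a}}\,\sqrt{\lambda_k^\varepsilon}$. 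Here the hypothesis $a\in W^{2,\infty}$, hence $\tilde a\in W^{2,\infty}$, is used both to make the series defining $S^\varepsilon$ converge and to obtain termwise bounds on the $S^n$; note also that $\tilde a>0$ precludes turning points, so there are no interior boundary layers and the endpoints serve only to quantize the problem. Imposing $\tilde\phi_k^\varepsilon(0)=\tilde\phi_k^\varepsilon(1)=0$ then yields a quantization relation of the form
\[
\sqrt{\bar{\tilde a}}\,\sqrt{\lambda_k^\varepsilon}=k\pi+\mathcal{E}_k^\varepsilon ,
\]
where the remainder $\mathcal{E}_k^\varepsilon$ is bounded by a function of $k\varepsilon$ alone, uniformly in $k$, and $\mathcal{E}_{k+1}^\varepsilon-\mathcal{E}_k^\varepsilon\to0$ as $k\varepsilon\to0$ uniformly in $k$. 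Subtracting the relations for $k$ and $k+1$ gives
\[
\sqrt{\lambda_{k+1}^\varepsilon}-\sqrt{\lambda_k^\varepsilon}=\frac{\pi}{\sqrt{\bar{\tilde a}}}+\frac{\mathcal{E}_{k+1}^\varepsilon-\mathcal{E}_k^\varepsilon}{\sqrt{\bar{\tilde a}}},
\]
and choosing $C(\delta)$ so small that $|\mathcal{E}_{k+1}^\varepsilon-\mathcal{E}_k^\varepsilon|\leqslant\delta\sqrt{\bar{\tilde a}}$ whenever $k\varepsilon\leqslant C(\delta)$ gives the asserted inequality.

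For the uniform lower bound on $\|\varphi_k^\varepsilon\|_{L^2(\omega)}$ I would argue exactly as in the computation above: writing $\tilde\phi_k^\varepsilon(x)=\gamma_k\exp(\mathrm{Re}\,S^\varepsilon(x))\sin(\mathrm{Im}\,S^\varepsilon(x))$ and using $|\mathrm{Re}\,S^\varepsilon(x)|\leqslant Ck\varepsilon$, $\mathrm{Im}\,S^\varepsilon(x)=(1-x)k\pi+O(\varepsilon^3k^3)$ and $\mathrm{Im}\,(S^\varepsilon)'(x)=-k\pi+O(\varepsilon^2k^3)$ valid for $k\varepsilon\leqslant D$, the matter reduces to bounding $\|\sin(\mathrm{Im}\,S^\varepsilon)\|_{L^2(\omega)}$ from below. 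An integration by parts rewrites $\int_\omega\sin^2(\mathrm{Im}\,S^\varepsilon)$ as $|\omega|/2-O(1/k)$, which is $\geqslant|\omega|/4$ for $k\geqslant k_0$, while the finitely many indices $k\leqslant k_0$ are handled by continuity together with the nondegeneracy of $\sin((1-x)k\pi)$ on $\omega$. Transferring back through the bi-Lipschitz change of variables then gives $\|\varphi_k^\varepsilon\|_{L^2(\omega)}\geqslant\tilde C$ for all $k$ with $k\varepsilon\leqslant C$. The main obstacle in the whole argument is the uniform control of the WKB remainder: one must establish convergence of the series defining $S^\varepsilon$ and show that its tail, and the boundary contributions producing $\mathcal{E}_k^\varepsilon$, depend on $k$ only through $k\varepsilon$ and vanish as $k\varepsilon\to0$, all constants being independent of $\varepsilon$. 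This is precisely the content of \cite{castro_zuazua}, whose proof carries over (as in \cite{lopez_zuazua}) once $\tilde a$ is recognized to be $W^{2,\infty}$ and bounded above and below; the rest is the elementary trigonometric estimate above and routine bookkeeping.
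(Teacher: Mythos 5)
Your proposal is correct and follows essentially the same route as the paper: reduce \eqref{div_evp} to \eqref{final_sys} by the change of variables, invoke the WKB expansion of \cite{castro_zuazua} (as in \cite{lopez_zuazua}) to get the quantization relation yielding the spectral gap, and prove the $L^2(\omega)$ lower bound via the trigonometric estimate $\int_\omega\sin^2(\mathrm{Im}\,S^\varepsilon)\geqslant|\omega|/4$ for $k\geqslant k_0$ with the finitely many small $k$ handled separately. You in fact supply slightly more detail than the paper (the bi-Lipschitz transfer of the $L^2(\omega)$ bound and the explicit subtraction of the quantization relations), but the argument is the same.
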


For the the operator $-\mathcal{L}^\varepsilon+C^*$ the above result can be used component-wise  with minor modification to obatin a proof of \cref{spectral_thm_couple}.

\section{$m$-control and $1$-control equivalence} \label{app_mcontrol}

In this section, we will show that the $m$-control system can be written as a \emph{combination} of $1$-control systems. The proof is analogous to the work presented in \cite{khodjaBDG}.

Suppose $B$ has the form $B= (b^1|b^2|\ldots |b^m)$, with $b^i \in \mb{R}$, for $1\leqslant i \leqslant m$.

\begin{lemma} \label{lm_basis}
Let (H1) be satsified, and let $\mf{X}$ denote the vector space generated by the columns of the matrix $[A|B]$. Then, there exist $r\in\{1,\ldots,k\}$, $\{l_j\}_{1\leqslant j \leqslant r} \subset \{1,\ldots,m\} $, and $\{ s_j \}_{1\leqslant j \leqslant r} \subset \{1,\ldots,n\}$ with $ \sum_{j=1}^r s_j=k$, such that
\[ \mf{B} := \bigcup_{j=1}^r \{ b^{l_j},Ab^{l_j},\ldots,A^{s_j-1}b^{l_j}\}\]
is a basis for $\mf{X}$.
For every $j$, there exists $a_{k,s_j}^i \in \mb{R}, 1\leqslant i \leqslant j, 1 \leqslant k \leqslant s_j $, such that
\begin{equation} \label{eq_mcontrol_decomp}
A^{s_j} b^{l_j} = \sum_{i=1}^j ( a^i_{1,s_j} b^{l_i} + a^i_{2,s_j} A b^{l_i} + \cdots + a^i_{s_i,s_j} A^{s_i-1} b^{l_i} ).
\end{equation}
\end{lemma}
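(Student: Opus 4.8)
Here is the plan. The strategy is to build $\mf{B}$ by a greedy selection among the columns of $[A|B]=(B\,|\,AB\,|\,\cdots\,|\,A^{n-1}B)$, scanned in a fixed order, and then to read off the decomposition \eqref{eq_mcontrol_decomp} from the order in which candidates get discarded. Concretely, I would list the candidates as
\[ b^1,Ab^1,\ldots,A^{n-1}b^1,\ b^2,Ab^2,\ldots,A^{n-1}b^2,\ \ldots,\ b^m,Ab^m,\ldots,A^{n-1}b^m, \]
that is, block by input, with increasing power inside each block, and run through the list once, keeping a vector in $\mf{B}$ exactly when it is linearly independent from the span of those already kept and discarding it otherwise. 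By construction $\mf{B}$ is linearly independent. I would then let $l_1<l_2<\cdots<l_r$ be the input indices whose blocks contribute at least one vector, and $s_j\geqslant 1$ the number of vectors contributed by the block of $l_j$; since at most $n$ vectors can be kept and they span a subspace of $\mb{R}^n$, automatically $s_j\leqslant n$ and $r\geqslant 1$ (here (H1) guarantees $B\neq 0$, indeed $\mf{X}=\mb{R}^n$ so $k=\dim\mf X=n$).

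The key structural fact, and the step I expect to require the most care, is that each contributing block contributes a \emph{contiguous initial segment} $b^{l_j},Ab^{l_j},\ldots,A^{s_j-1}b^{l_j}$; equivalently, once a vector of a block is discarded, every later vector of that block (hence every higher power $A^{\ell}b^{l_j}$) is discarded too. I would prove this by induction along the scan, maintaining the invariant that \emph{every discarded vector lies in the span of the current $\mf{B}$} (immediate, since $\mathrm{span}(\mf{B})$ only grows). If $A^{t}b^{l}$ is the first discard inside its block, then $A^{t+1}b^{l}\in A\,\mathrm{span}(\mf{B}_{\mathrm{cur}})=\mathrm{span}(A\,\mf{B}_{\mathrm{cur}})$, and each generator of $A\,\mf{B}_{\mathrm{cur}}$ is either an element of $\mf{B}_{\mathrm{cur}}$, or the ``overflow'' vector $A^{s_i}b^{l_i}$ of a previously completed chain, or $A^{t}b^{l}$ itself; in the first case it lies in $\mathrm{span}(\mf{B}_{\mathrm{cur}})$ trivially, in the second by the invariant, and in the third because $A^{t}b^{l}$ is the current discard. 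Hence $A^{t+1}b^{l}\in\mathrm{span}(\mf{B}_{\mathrm{cur}})$ and is discarded, and induction on the power closes the argument. This is exactly the classical mechanism underlying the controllability-index (Brunovský) construction for multi-input systems; everything else is bookkeeping.

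With contiguity established, the rest is routine. For spanning: every column of $[A|B]$ is either kept or discarded, and a discarded column lies in $\mathrm{span}(\mf{B})$ by the invariant, so $\mathrm{span}(\mf{B})=\mf{X}$ and $\mf{B}$ is a basis; comparing cardinalities gives $\sum_{j=1}^{r}s_j=\dim\mf{X}=k$, hence $r\leqslant k$. Finally, for \eqref{eq_mcontrol_decomp}: when the algorithm processes $A^{s_j}b^{l_j}$, the first discard of the block of $l_j$, the set $\mf{B}_{\mathrm{cur}}$ consists precisely of the vectors of the first $j$ chains, since the blocks of the non-selected inputs occurring between $l_{j-1}$ and $l_j$ contributed nothing and the block of $l_j$ has so far contributed only $b^{l_j},\ldots,A^{s_j-1}b^{l_j}$. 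Therefore
\[ A^{s_j}b^{l_j}\in\mathrm{span}\bigl\{\,A^{\ell}b^{l_i}:\ 1\leqslant i\leqslant j,\ 0\leqslant \ell\leqslant s_i-1\,\bigr\}, \]
which is precisely \eqref{eq_mcontrol_decomp} after relabelling the coefficients as $a^{i}_{\ell+1,s_j}$.
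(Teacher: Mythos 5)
The paper does not prove this lemma itself---it explicitly omits the proof and defers to \cite{khodjaBDG}---and your greedy block-by-input scan, with the invariant that every discarded column lies in the span of the columns already kept, is essentially the constructive argument the paper points to; it is correct, and contiguity of each chain, the spanning property, the count $\sum_j s_j=k$, and the triangular decomposition \eqref{eq_mcontrol_decomp} all follow as you describe. The only point worth making explicit is the degenerate case $s_i=n$ for a previously completed chain, where the ``overflow'' $A^{s_i}b^{l_i}=A^{n}b^{l_i}$ is not actually a scanned candidate and so is not literally covered by your invariant; but there $\mf{B}_{\mathrm{cur}}$ already spans $\mf{X}$ (or one invokes Cayley--Hamilton), so the induction step is trivially valid.
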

We omit the proof of the above lemma, see \cite{khodjaBDG} for a constructive proof and a method for finding the basis $\mf{B}$.
Next, we define $P$ as the matrix given by
\[P := (b^{l_1}| A b^{l_1} | \ldots | A^{s_1-1} b^{l_1} | \ldots | b^{l_r} | A b^{l_r} | \ldots | A^{s_r-1}b^{l_r} ).\]
Now, due to \eqref{eq_mcontrol_decomp}, we have 
\[AP=PC, \quad P e_{S_i} = b^{l_i}, \ 1 \leqslant i \leqslant r,\]
where $S_i = 1 + \sum_{j=1}^{i-1} s_j$, for $ 1 \leqslant i \leqslant r$, and the matrix $\mc{C}$ is given by
\[\mc{C}= \begin{bmatrix}
C_{11} & C_{12} & \ldots & C_{1r} \\
0 & C_{22} & \ldots & C_{2r} \\
\vdots & \vdots & \ddots & \vdots \\
0 & 0 & \ldots & C_{rr}
\end{bmatrix},\]
with the matrices $C_{ii}$ and $C_{ij}$, for $1 \leqslant i \leqslant j \leqslant r$, are as follows
\[ C_{ii} = \begin{bmatrix}
 0 & 0 & 0 & \ldots & a^i_{1,s_i} \\
 1 & 0 & 0 & \ldots & a^i_{2,s_i} \\
 0 & 1 & 0 & \ldots & a^i_{3,s_i} \\
 \vdots & \vdots & \ddots & \ddots & \vdots \\
 0 & 0 & \ldots & 1 & a^i_{s_i,s_i}
\end{bmatrix}, \qquad 
C_{ij} = \begin{bmatrix}
 0 & 0 & 0 & \ldots & a^i_{1,s_j} \\
 0 & 0 & 0 & \ldots & a^i_{2,s_j} \\
 0 & 0 & 0 & \ldots & a^i_{3,s_j} \\
 \vdots & \vdots & \ddots & \ddots & \vdots \\
 0 & 0 & \ldots & 0 & a^i_{s_i,s_j}
\end{bmatrix}. \] 
Now let us consider the system 
\begin{align}\label{general_tildeB_M}
\begin{dcases}
\p_t \tilde{u}-\mc{L}^\varepsilon \tilde{u} +A \tilde{u}= \tilde{B} f 1_{\omega}, & (t,x)\in (0, T)\times (0,1), \\ 
\tilde{u}(t,0)=\tilde{u}(t,1)=0, & t \in (0,T) \\
\tilde{u}(0,x)=\tilde{u}^0, & x \in (0,1),
\end{dcases}
\end{align}
where the matrix $\tilde{B}$ is given by $\tilde{B} = (0|\ldots|0|b^{l_1}|0|\ldots|0|b^{l_2}|0|\ldots|0|b^{l_r}|0|\ldots) $, lying in the space $\mc{L}(\mb{R}^m;\mb{R}^n)$. If the above system, with control matrix $\tilde{B}$, is controllable, then system \eqref{general_B_M}, with control matrix $B$, is also controllable.

Now consider the transformation $u = P^{-1} \tilde{u}$, with $\tilde{f}=(\tilde{f}_1,\ldots,\tilde{f}_m)$ in $L^2((0,T)\times (0,1))^m$. Proving the null controllability of \eqref{general_tildeB_M} with $\tilde{f}$ as the control function, is equivalent to proving the null controllability of \eqref{canonical_system_n} with $D=(e_{S_1}|e_{S_2}|\ldots|e_{S_r})$ and the control function $f = (\tilde{f}_{l_1},\tilde{f}_{l_2},\ldots,\tilde{f}_{l_r})$. Note that, this equivalence allows us to write system \eqref{general_B_M} as a combination of $r$-subsystems. For each such subsystem the coupling is given by $C_{ii}$ and it is controlled by only one control vector given by $e_1^{s_i}=(1,0,\ldots,0)^\text{tr} \in \mb{R}^{s_i}$. The construction is such that each of these subsystems contains the pair $(C_{ii},e_1^{s_i})$, which satisfies the Kalman condition $\det [C_{ii}|e_1^{s_i}] \neq 0$. Furthermore, each of the coupling matrices $C_{ii}$'s, for $1\leqslant i \leqslant r$, are cascade type. 

Hence, to solve the control problem for system \eqref{general_B_M}, which has a general coupling  matrix with $m$-controls acting on it, it is enough to study a system with a cascade type coupling matrix with only one control acting on it, namely system \eqref{canonical_system}.


\begin{thebibliography}{10}

\bibitem{khodjaBDG}
{\sc F.~Ammar~Khodja, A.~Benabdallah, C.~Dupaix, and M.~Gonz\'{a}lez-Burgos},
  {\em A generalization of the {K}alman rank condition for time-dependent
  coupled linear parabolic systems}, Differ. Equ. Appl., 1 (2009),
  pp.~427--457.

\bibitem{MR2511553}
{\sc F.~Ammar-Khodja, A.~Benabdallah, C.~Dupaix, and M.~Gonz\'{a}lez-Burgos},
  {\em A {K}alman rank condition for the localized distributed controllability
  of a class of linear parbolic systems}, J. Evol. Equ., 9 (2009),
  pp.~267--291.

\bibitem{MR2846087}
{\sc F.~Ammar-Khodja, A.~Benabdallah, M.~Gonz\'{a}lez-Burgos, and
  L.~de~Teresa}, {\em Recent results on the controllability of linear coupled
  parabolic problems: a survey}, Math. Control Relat. Fields, 1 (2011),
  pp.~267--306.

\bibitem{boyer}
{\sc F.~Boyer}, {\em {Controllability of linear parabolic equations and
  systems}}.
\newblock Lecture, Feb. 2022.

\bibitem{castro_zuazua}
{\sc C.~Castro and E.~Zuazua}, {\em Low frequency asymptotic analysis of a
  string with rapidly oscillating density}, SIAM J. Appl. Math., 60 (2000),
  pp.~1205--1233.

\bibitem{PD99}
{\sc D.~Cioranescu and P.~Donato}, {\em An introduction to homogenization},
  Oxford university press, 1999.

\bibitem{coron_book}
{\sc J.-M. Coron}, {\em Control and Nonlinearity}, American Mathematical
  Society, USA, 2007.

\bibitem{coron09}
{\sc J.-M. Coron and S.~Guerrero}, {\em Null controllability of the
  n-dimensional stokes system with n- 1 scalar controls}, Journal of
  Differential Equations, 246 (2009), pp.~2908--2921.

\bibitem{coron10}
{\sc J.-M. Coron, S.~Guerrero, and L.~Rosier}, {\em Null controllability of a
  parabolic system with a cubic coupling term}, SIAM journal on control and
  optimization, 48 (2010), pp.~5629--5653.

\bibitem{dol_rus}
{\sc S.~Dolecki and D.~L. Russell}, {\em A general theory of observation and
  control}, SIAM J. Control Optim., 15 (1977), pp.~185--220.

\bibitem{jose15}
{\sc P.~Donato and E.~C. Jose}, {\em Asymptotic behavior of the approximate
  controls for parabolic equations with interfacial contact resistance}, ESAIM:
  Control, Optimisation and Calculus of Variations, 21 (2015), pp.~138--164.

\bibitem{jose21}
{\sc P.~Donato, E.~C. Jose, and D.~Onofrei}, {\em On the approximate
  controllability of parabolic problems with non-smooth coefficients},
  Asymptotic Analysis, 122 (2021), pp.~395--402.

\bibitem{don-nabil01}
{\sc P.~Donato and A.~Nabil}, {\em Approximate controllability of linear
  parabolic equations in perforated domains}, ESAIM: Control, Optimisation and
  Calculus of Variations, 6 (2001), pp.~21--38.

\bibitem{Fella19}
{\sc L.~Faella, S.~Monsurr{\`o}, and C.~Perugia}, {\em Exact controllability
  for evolutionary imperfect transmission problems}, Journal de
  Math{\'e}matiques Pures et Appliqu{\'e}es, 122 (2019), pp.~235--271.

\bibitem{fur_iman}
{\sc A.~V. Fursikov and O.~Y. Imanuvilov}, {\em Controllability of evolution
  equations}, vol.~34 of Lecture Notes Series, Seoul National University,
  Research Institute of Mathematics, Global Analysis Research Center, Seoul,
  1996.

\bibitem{MR2598471}
{\sc M.~Gonz\'{a}lez-Burgos and L.~de~Teresa}, {\em Controllability results for
  cascade systems of {$m$} coupled parabolic {PDE}s by one control force},
  Port. Math., 67 (2010), pp.~91--113.

\bibitem{MR2141924}
{\sc H.~Leiva}, {\em Controllability of a system of parabolic equations with
  non-diagonal diffusion matrix}, IMA J. Math. Control Inform., 22 (2005),
  pp.~187--199.

\bibitem{lopez_zuazua}
{\sc A.~L\'{o}pez and E.~Zuazua}, {\em Uniform null-controllability for the
  one-dimensional heat equation with rapidly oscillating periodic density},
  Ann. Inst. H. Poincar\'{e} C Anal. Non Lin\'{e}aire, 19 (2002), pp.~543--580.

\bibitem{ped06}
{\sc P.~Pedregal and F.~Periago}, {\em Some remarks on homogenization and exact
  boundary controllability for the one-dimensional wave equation}, Quarterly of
  applied mathematics, 64 (2006), pp.~529--546.

\bibitem{MR4687430}
{\sc T.~Takahashi, L.~de~Teresa, and Y.~Wu-Zhang}, {\em A {K}alman condition
  for the controllability of a coupled system of {S}tokes equations}, J. Evol.
  Equ., 24 (2024), pp.~Paper No. 4, 25.

\bibitem{teb12}
{\sc L.~Tebou}, {\em Uniform null controllability of a parabolic equation with
  rapidly oscillating periodic coefficients}, Asymptotic Analysis, 80 (2012),
  pp.~149--170.

\bibitem{tenenbaum_tucsnak}
{\sc G.~Tenenbaum and M.~Tucsnak}, {\em New blow-up rates for fast controls of
  {S}chr\"{o}dinger and heat equations}, J. Differential Equations, 243 (2007),
  pp.~70--100.

\bibitem{tucsnak_weiss}
{\sc M.~Tucsnak and G.~Weiss}, {\em Observation and control for operator
  semigroups}, Birkh\"{a}user Advanced Texts: Basler Lehrb\"{u}cher.
  [Birkh\"{a}user Advanced Texts: Basel Textbooks], Birkh\"{a}user Verlag,
  Basel, 2009.

\bibitem{zuazua94}
{\sc E.~Zuazua}, {\em Approximate controllability for linear parabolic
  equations with rapidly oscillating coefficients}, Control and Cybernetics, 23
  (1994).

\end{thebibliography}
\end{document}